\theoremstyle{plain}
\newtheorem{theorem}{Theorem}[section]
\newtheorem{definition}[theorem]{Definition}
\newtheorem{corollary}[theorem]{Corollary}
\newtheorem{proposition}[theorem]{Proposition}
\newtheorem{lemma}[theorem]{Lemma}
\newtheorem{remark}[theorem]{Remark}
\numberwithin{theorem}{section}
\numberwithin{equation}{section}
\newcommand{\average}{{\mathchoice {\kern1ex\vcenter{\hrule height.4pt
width 6pt depth0pt} \kern-9.7pt} {\kern1ex\vcenter{\hrule
height.4pt width 4.3pt depth0pt} \kern-7pt} {} {} }}
\def\R{\mathbb{R}}
\def\div{\text{div}}
\renewcommand{\a }{\alpha }
\renewcommand{\b }{\beta }
\renewcommand{\d}{\delta }
\newcommand{\D }{\Delta }
\newcommand{\e }{\varepsilon }
\newcommand{\g }{\gamma}
\renewcommand{\i }{\iota}
\newcommand{\G }{\Gamma}
\renewcommand{\l }{\lambda }
\renewcommand{\L }{\Lambda }
\newcommand{\n }{\nabla }
\newcommand{\s }{\sigma }
\newcommand{\z }{\zeta}
\renewcommand{\th }{\theta }
\renewcommand{\O }{\Omega }
\newcommand{\ov}{\overline}
\newcommand{\be}{\begin{equation}}
\newcommand{\ee}{\end{equation}}
\newcommand{\de}{\partial}
\newcommand{\ti}{\widetilde}
\renewcommand{\k}{\kappa}
\newcommand{\calC }{\mathcal{C}}
\newcommand{\calD }{\mathcal{D}}
\newcommand{\N}{\mathbb{N}}
\renewcommand{\H}{{\mathcal H}}
\newcommand{\cC}{{\mathcal C}}
\newcommand{\cF}{{\mathcal F}}
\newcommand{\cH}{{\mathcal H}}
\newcommand{\cK}{{\mathcal K}}
\newcommand{\cL}{{\mathcal L}}
\newcommand{\cM}{{\mathcal M}}
\newcommand{\cO}{{\mathcal O}}
\newcommand{\cP}{{\mathcal P}}
\newcommand{\dist}{{\rm dist}}
\newcommand{\eps}{\varepsilon}
\renewcommand{\epsilon}{\varepsilon}
\newcommand{\Ds}{ (-\D)^s}
\DeclareMathOperator{\spann}{span}
\begin{document}
\title[Nondegeneracy properties and uniqueness of positive solutions]{Nondegeneracy properties and uniqueness of positive solutions to a class of fractional semilinear equations}

 \author[]
{ Mouhamed Moustapha Fall and Tobias Weth}


\address{${}^1$African Institute for Mathematical Sciences in Senegal (AIMS Senegal), 
KM 2, Route de Joal, B.P. 14 18. Mbour, S\'en\'egal.}
\address{${}^2$Goethe-Universit\"{a}t Frankfurt, Institut f\"{u}r Mathematik. Robert-Mayer-Str. 10, D-60629 Frankfurt, Germany.}

\email{weth@math.uni-frankfurt.de}
\email{mouhamed.m.fall@aims-senegal.org}

 \begin{abstract}
   \noindent 
We prove that positive solutions  $u\in H^s(\mathbb{\R}^N)$ to the equation  $(-\Delta )^s u+ u=u^p$  in $\mathbb{\R}^N$  are nonradially nondegenerate,  for all $s\in (0,1)$,  $N\geq 1$ and  $p>1$ strictly smaller than the critical Sobolev exponent. 
By this we mean that the linearized equation $(-\Delta )^s w+ w-pu^{p-1}w = 0$ does not admit nonradial solutions beside the directional derivatives of $u$.  \\
Letting $B$ be the unit  centered ball and $\l_1(B)$ the first Dirichlet eigenvalue of the fractional Laplacian $(-\Delta )^s$,   we also prove  that positive solutions to  $(-\Delta )^s u+\lambda u=u^p$  in   ${B}$ with   $u=0$ on $\mathbb{\R}^N\setminus B$,  are nonradially nondegenerate for any   $\lambda> -\l_1(B)$ in the sense that the linearized equation does not admit nonradial solutions. From these results, we then deduce uniqueness and full nondegeneracy of positive solutions in some special cases.
In particular, in the case  $N=1$, we prove that the equation $(-\Delta )^s u+ u=u^2$ in $\R$ or in $B$, with zero exterior data,  admits a unique even solution which is fully nondegenerate in the optimal range $s \in (\frac{1}{6},1)$, thus extending the classical  uniqueness result of Amick and Toland on the Benjamin-Ono equation. Moreover, in the case $N=1$, $\lambda=0$, we also prove the uniqueness and full nondegeneracy of positive solutions for the Dirichlet problem in $B$ with arbitrary subcritical exponent $p$. 
Finally, we determine the unique positive ground state solution of $(-\Delta )^{\frac{1}{2}} u+ u=u^{p}$ in $\mathbb{\R}^N$, $N \ge 1$ with $p=1+\frac{2}{N+1}$and compute the sharp constant in the associated Gagliardo-Nirenberg inequality
  $$
  \|u\|_{L^{p+1}(\R^N)} \le C \|(-\Delta )^{\frac{1}{4}} u\|_{L^2(\R^N)}^{\frac{N}{N+2}} \|u\|_{L^2(\R^N)}^{\frac{2}{N+2}}.
  $$
 \end{abstract}

\maketitle
\setcounter{equation}{0}

\section{Introduction}\label{s:Intro}
Let $s\in (0,1)$ and $N\geq 1$.   The present paper is devoted to nondegeneracy properties of solutions to the problem
  \begin{align}\label{eq:1.1RN}
\Ds u+ \lambda u  = u^{p}\quad\text{ in $\R^N$},\qquad u>0 \quad\textrm{ in $\R^N$}, \qquad  u\in H^s(\R^N)
\end{align} 
and to the Dirichlet problem
\begin{align}\label{eq:1.1}
\Ds u+\l u  = u^{p}\quad\text{ in ${B}$},\qquad u>0 \quad\textrm{in ${B}$}, \qquad  u\in \cH^s(B)
\end{align} 
where $B:=\{x\in \R^N\,:\, |x|<1\}$ is the unit ball. Here $\Ds$ denotes the fractional Laplacian of order $s$, which, under appropriate smoothness and integrability assumptions on the function $u$, is pointwisely given by
$$
\Ds u (x) = c_{N,s} \lim_{\epsilon\to0^+} \int_{\R^N\setminus
   B_\epsilon(x) }\frac{u(x)-u(y)}{|x-y|^{N+2 s}}  dy 
 $$
 with $c_{N,s}=2^{2s}\pi^{-\frac{N}{2}}s\frac{\Gamma(\frac{N+2s}2)}{\Gamma(1-s)}$. Moreover, $H^s(\R^N)$ is the usual fractional Sobolev space endowed with the scalar product 
  $$
(v_1,v_2) \mapsto \langle v_1,v_2 \rangle_{s} = [v_1,v_2]_s +\int_{\R^N}v_1 v_2dx  
$$
and induced norm $v\mapsto \|v\|_s=\sqrt{[v]^2_s+\|v\|^2_{L^2(\R^N)}}$, where
\begin{equation}
  \label{eq:def-gagliardo-nirenberg-quadratic-form}
[v_1,v_2]_s = c_{N,s} \int_{\R^N \times \R^N}\frac{(v_1(x)-v_1(y))(v_2(x)-v_2(y))}{|x-y|^{N+2s}}dxdy
\end{equation}
and $[v]_s^2 := [v,v]_s$. In addition, we let $\cH^s(B)$ denote the closed subspace of functions $u \in H^s(\R^N)$ with $u \equiv 0$ in $\R^N \setminus B$. Furthermore, we assume that $\lambda >0$ in case of (\ref{eq:1.1RN}) and $\l>-\l_1(B)$ in case of (\ref{eq:1.1}), where $\l_1(B)$ denotes the first Dirichlet eigenvalue of $\Ds$ for $s\in (0,1)$. Finally, we assume that $p\in (1, 2^*_s-1)$ with  $2^*_s$ being the critical fractional Sobolev exponent given by
$$
  2^*_s=\frac{2N}{N-2s} \,\,\textrm{ if }  2s<N\qquad \textrm{ and } \qquad  2^*_s= \infty\,\, \textrm{ if } 2s\geq 1=N .
  $$
  A priori, it is natural to consider (\ref{eq:1.1RN}) and (\ref{eq:1.1}) in weak sense. A positive function $u \in H^s(\R^N)$ is called a weak solution of (\ref{eq:1.1RN}) if
  $$
  [u,v]_s + \lambda \int_{\R^N} u v\,dx = \int_{\R^N}u^{p}v \,dx \qquad \text{for all $v \in H^s(\R^N)$.}
  $$
  Similarly, a positive function $u \in \cH^s(B)$ is called 
a weak solution of (\ref{eq:1.1}) if
  $$
  [u,v]_s + \lambda \int_{B} u v\,dx = \int_{B}u^{p}v \,dx \qquad \text{for all $v \in \cH^s(B)$.}
  $$   
It is well known (see e.g. \cite[Proposition 3.1]{FLS}) that positive (weak) solutions to \eqref{eq:1.1RN} belong to $L^\infty(\R^N)\cap H^{s+1}(\R^N)\cap C^\infty(\R^N)$, are radially symmetric about a point $x_0\in \R^N$ and are strictly decreasing in $|x-x_0|$. We also recall that, by \cite{XJV}, every (weak) solution $u \in \cH^s(B)$ of (\ref{eq:1.1}) is contained in $L^\infty(\R^N)$, and therefore \cite{RS16a} and \cite{XJV} imply that $u\in C^s(\R^N)\cap C^\infty{(B)}$.  Moreover, by \cite{JW}, $u$ is a radial function which is strictly decreasing in its radial variable (see  also   \cite{BLW} for a different argument but with $\l=0$). 

In the local case $s=1$, the uniqueness of solutions to problems \eqref{eq:1.1} and  \eqref{eq:1.1RN} (up to translations) is a classical result and has been established by Kwong in his seminal paper \cite{Kwong}, see also \cite{Mc,C}.
In contrast, for semilinear fractional equations, the uniqueness and nondegeneracy of specific classes of solutions is a very challenging problem, and few results are available up to now. One of the main reasons for this is the lack of ODE techniques which are highly useful in the case $s=1$. 
It was first proved by Amick and Toland in \cite{AT}, based on complex analytic methods, that \eqref{eq:1.1RN} has a unique solution in the special case $s=1/2$, $p=2$ and $N=1$, see also \cite{Al} for an alternative proof. Much later, Frank, Lenzmann and Silvestre \cite{FLS} proved uniqueness and nondegeneracy of Morse index one solutions to \eqref{eq:1.1RN} up to translations in the general case, extending an earlier result of Frank and Lenzmann for the case $N=1$ to all dimensions, and extending also results of \cite{FV,KMR} to all $s\in (0,1)$. However, it remains open up to now whether \eqref{eq:1.1RN} may admit higher Morse index solutions. Moreover, in the case of (\ref{eq:1.1}), a uniqueness result is not even available for Morse index one solutions up to now.

The first purpose of the present paper is to show that all positive solutions of (\ref{eq:1.1}) are nondegenerate in nonradial directions, and that the same is true for (\ref{eq:1.1RN}) up to the directional derivatives of $u$. In order to formulate these statements rigorously, we consider the subspaces
$$
H^s_{r}(\R^N):= \{w \in H^s(\R^N)\::\: \text{$w$ radially symmetric}\}
$$
and 
$$
H^s_{nr}(\R^N):= \bigl[H^s_{r}(\R^N)\bigr]^\perp := \{u \in H^s(\R^N)\::\: \text{$\langle u, w \rangle_s = 0$ for all $w \in H^s_r(\R^N)$.}\}.
$$
Moreover, we let $\cH^s_{r}(B)$ resp. $\cH^s_{nr}(B)$ denote the subspaces of
$H^s_r(\R^N)$ and $H^s_{nr}(\R^N)$, respectively of functions $w$ with $w \equiv 0$ in $\R^N \setminus B$. We then have the following result.

\begin{theorem}\label{th:nondeg}
  Let $s\in (0,1)$, $N\geq 1$ and $1<p<2^*_s-1$.
  \begin{itemize}
  \item[(i)] Let  $u\in \cH^s(B)$ satisfy (\ref{eq:1.1}).
Then there exists $\ov \L>p$ such that 
   \begin{equation}
     \label{eq:B-th-eq}
 [w]_{s}^2 + \lambda \int_{B} w^2dx \geq \ov \L  \int_{\R^N} u^{p-1} w^2 dx  \qquad\textrm{ for all $w\in\cH^s_{nr}(B)$.} 
\end{equation}
 \item[(ii)] Let $u \in H^s(\R^N)$ satisfy \eqref{eq:1.1RN}. Then there exists $\ov \L>p$ such that 
   \begin{equation}
     \label{eq:r-n-th-eq}
 [w]_{s}^2+ \lambda  \int_{\R^N} w^2dx \geq \ov \L  \int_{\R^N} u^{p-1} w^2  dx  \qquad\textrm{ for all $w\in\cM_u$ ,} 
   \end{equation}
where
$$
\cM_u:= \Bigl\{w \in H^s_{nr}(\R^N)\,: \int_{\R^N}u^{p-1}(\de_{x_i}u) w\,dx=0 \quad\textrm{for all $i=1,\dots,N$}  \Bigr\}.
$$
  \end{itemize}
\end{theorem}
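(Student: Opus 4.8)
The plan is to diagonalise the linearised quadratic form by spherical harmonics and to localise the problem to the bottom of the spectrum in each angular sector, using the Caffarelli--Silvestre extension to control the dependence on the sector. Let $U$ be the $s$-harmonic extension of $u$, so that $\div(y^{1-2s}\n U)=0$ in $\RNp$, $U(\cdot,0)=u$, $-\kappa_s\lim_{y\to0^+}y^{1-2s}\de_y U=\Ds u$, and recall $[w]_s^2=\kappa_s\inf\{\int_{\RNp}y^{1-2s}|\n W|^2 : W(\cdot,0)=w\}$, attained at the extension of $w$. Since $u$, hence $u^{p-1}$, is radial, the forms $w\mapsto[w]_s^2$, $\int u^{p-1}w^2$ and $\int w^2$ split orthogonally over the spherical-harmonic sectors; writing $w=\sum_{k\ge1}w_k$ with $w_k$ in the degree-$k$ sector, it suffices to prove that
\[
\L_1^{(k)}:=\inf\Bigl\{\frac{[w_k]_s^2+\lambda\int w_k^2}{\int u^{p-1}w_k^2}\ :\ w_k\ \text{in sector }k,\ w_k\not\equiv0\Bigr\}>p\quad\text{for all }k\ge1
\]
in case (i), and, in case (ii), that $\L_1^{(1)}=p$ with $p$-eigenspace exactly $\spann\{\de_{x_1}u,\dots,\de_{x_N}u\}$ while $\L_1^{(k)}>p$ for $k\ge2$. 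Separating variables in the extension, for a profile $\psi$ and an $L^2$-normalised degree-$k$ spherical harmonic $Y$ one has $[\psi(r)Y]_s^2=\kappa_s\inf_\Phi\int_0^\infty\!\!\int_0^\infty y^{1-2s}\bigl(|\de_r\Phi|^2+|\de_y\Phi|^2+\tfrac{k(k+N-2)}{r^2}\Phi^2\bigr)r^{N-1}\,dr\,dy$; since the denominators $\int u^{p-1}\psi^2 r^{N-1}dr$ and $\int\psi^2 r^{N-1}dr$ do not depend on $k$, $\L_1^{(k)}$ is nondecreasing in $k$, and strictly increasing once the infimum is attained — which it always is, by compactness of $\cH^s(B)\embed L^2(B)$, resp.\ of multiplication by the decaying weight $u^{p-1}$ on $H^s(\R^N)$. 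This reduces everything to sector $k=1$, plus the strictness $\L_1^{(2)}>\L_1^{(1)}$ in case (ii).

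For sector $1$ the heart of the matter is a ground-state substitution built from $v:=-\de_{x_i}U$. Because the Poisson kernel of the extension is symmetric decreasing in $x$ and $u$ is radially decreasing, $U(\cdot,y)$ is radially decreasing, so $v=v_1(r,y)\tfrac{x_i}{|x|}$ with $v_1>0$ for $r,y>0$; differentiating the extension equation gives $\div(y^{1-2s}\n v)=0$ in $\RNp$, and differentiating the Neumann condition together with \eqref{eq:1.1RN}, resp.\ \eqref{eq:1.1}, shows that the conormal trace of $v$ equals $(pu^{p-1}-\lambda)v$ on $\{y=0\}$ (on all of $\R^N$, resp.\ on $B$). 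Given a sector-$1$ function $w_1=\psi(r)\tfrac{x_i}{|x|}$ with extension $\Phi(r,y)\tfrac{x_i}{|x|}$, set $\eta:=\Phi/v_1$; expanding $|\n(v_1\eta)|^2+\tfrac{N-1}{r^2}v_1^2\eta^2$, integrating by parts in $(r,y)$ against $y^{1-2s}r^{N-1}\,dr\,dy$, using the equation for $v_1$ to cancel the centrifugal terms and the conormal condition to evaluate the boundary integral on $\{y=0\}$, one arrives at
\[
[w_1]_s^2+\lambda\int_{\R^N}w_1^2-p\int_{\R^N}u^{p-1}w_1^2=\kappa_s\int_0^\infty\!\!\int_0^\infty y^{1-2s}\,v_1^2\,|\n_{(r,y)}\eta|^2\,r^{N-1}\,dr\,dy\ \ge\ 0,
\]
with equality if and only if $\eta$ is constant, i.e.\ $\psi$ is proportional to $v_1(\cdot,0)=-u'$.

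Both conclusions then follow quickly. In case (i), $\psi\propto -u'$ is incompatible with $\psi$ lying in the energy space, since $-u'(r)\to+\infty$ as $r\to1^-$ (because $u\asymp\dist(\cdot,\de B)^s$ near $\de B$ by boundary regularity); hence the right-hand side above is strictly positive for every $w_1\ne0$, and since the infimum defining $\L_1^{(1)}$ is attained we get $\L_1^{(1)}>p$, whence $\L_1^{(k)}\ge\L_1^{(1)}>p$ for all $k\ge1$, and summing the sector estimates yields \eqref{eq:B-th-eq} with $\ov\L=\L_1^{(1)}$. In case (ii), $\de_{x_i}u\in H^s(\R^N)$ realises equality, so $\L_1^{(1)}=p$; since $\lambda>0$ and multiplication by the decaying weight $u^{p-1}$ is compact from $H^s(\R^N)$ to $L^2(\R^N)$, the generalised eigenvalue problem has discrete spectrum, the $p$-eigenspace in sector $1$ is exactly $\spann\{\de_{x_j}u\}$, the next sector-$1$ eigenvalue $\L_2^{(1)}$ is $>p$, and attainment of $\L_1^{(2)}$ together with the strict centrifugal comparison gives $\L_1^{(k)}\ge\L_1^{(2)}>p$ for $k\ge2$; as $w\in\cM_u$ forces the sector-$1$ part of $w$ to be $\langle\cdot,\cdot\rangle_{u^{p-1}}$-orthogonal to $\spann\{\de_{x_j}u\}$, this gives \eqref{eq:r-n-th-eq} with $\ov\L=\min\{\L_2^{(1)},\L_1^{(2)}\}$ (only $\L_2^{(1)}$ when $N=1$).

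The main obstacle is the rigorous justification of the sector-$1$ identity in the Dirichlet case: there $v=-\de_{x_i}U$ has a trace $-\de_{x_i}u\asymp\dist(\cdot,\de B)^{s-1}$ which blows up at $\de B\times\{0\}$ and need not even belong to $L^2$ when $s\le\tfrac12$, so the equation for $v$, the conormal trace identity, and especially the integration by parts across the lateral boundary must be treated with care — either via the fine boundary expansion $u=d^{\,s}g$ with $g$ Hölder up to $\de B$ (coming from the regularity theory quoted in the introduction) or by truncating $u$ away from $\de B$ and controlling the resulting error terms. Subsidiary technical points, each routine in isolation, are the symmetric-decreasing monotonicity of the extension (ensuring $v_1>0$), the vanishing of the boundary contributions at $r=0$ and $r=\infty$, and the compactness/discreteness of the constrained eigenvalue problem on $\R^N$.
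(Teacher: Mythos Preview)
Your argument is essentially correct and is a genuinely different route from the one taken in the paper. The paper does \emph{not} use the Caffarelli--Silvestre extension or spherical-harmonic decomposition for Theorem~\ref{th:nondeg}. Instead it proves a \emph{nonlocal} Picone-type identity (Lemma~\ref{lem:Picone}) directly for the fractional Laplacian: given a unit vector $e$, a function $v$ odd with respect to the reflection $\sigma_e$ and positive on $\{x\cdot e>0\}$ solving $(-\Delta)^s v=Vv$, and an odd test function $w$ with $w/v$ compactly supported, one has
\[
[w]_s^2-\int Vw^2
=\int_{\{x\cdot e>0\}}\!\int_{\{y\cdot e>0\}}c_{N,s}\,v(x)v(y)\Bigl(\tfrac{w(x)}{v(x)}-\tfrac{w(y)}{v(y)}\Bigr)^2\Bigl(\tfrac{1}{|x-y|^{N+2s}}-\tfrac{1}{|\sigma_e(x)-y|^{N+2s}}\Bigr)\,dx\,dy\ge0.
\]
For an alleged nonradial eigenfunction $w$ with $\Lambda\le p$, the paper picks a hyperplane with nontrivial odd part $\tilde w$, applies the identity with $v=-\nabla u\cdot e$ (cut off near $\partial B$ by the functions $\zeta_k$), passes to the limit, and concludes that $\tilde w$ is proportional to $\partial_e u$; in the ball case this contradicts $\tilde w\in L^\infty$, and in the $\R^N$ case the paper then invokes a separate geometric lemma on $S^{N-1}$ to upgrade ``odd part proportional to $\partial_e u$ for every $e$'' into ``$w\in\spann\{\partial_{x_i}u\}$''.

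Your approach localises the extension in spherical-harmonic sectors and runs a \emph{local} ground-state substitution $\eta=\Phi/v_1$ in the $(r,y)$ half-plane; the centrifugal monotonicity then dispenses with the higher sectors in one stroke. This is closer in spirit to the Frank--Lenzmann--Silvestre machinery and has the advantage that the passage from $k=1$ to $k\ge2$ is automatic, whereas the paper needs the auxiliary sphere lemma for part~(ii). Conversely, the paper's nonlocal identity is purely kernel-based and, as they remark, transfers verbatim to other radially decreasing jump kernels (e.g.\ $(-\Delta+m^2)^s$), while your extension argument is tied to the specific Caffarelli--Silvestre structure.

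The obstacle you flag in the ball case---$v_1(r,0)=-u'(r)\asymp(1-r)^{s-1}$ near $r=1$, possibly not even $L^2$---is real, and the paper meets the analogous difficulty in its own framework by the same device you propose: inserting the cutoff $\zeta_k(x)=1-\chi(k(1-|x|))$, applying the identity to $w_k=\zeta_k\tilde w$, and using $[w_k]_s\to[\tilde w]_s$ together with Fatou on the nonnegative right-hand side. So your sketch is sound, and the truncation you anticipate is exactly what is needed.
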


Let us explain in detail why Theorem~\ref{th:nondeg} should be regarded as a result on nonradial nondegeneracy. For this, let $u\in \cH^s(B)$ be a fixed radial solution of (\ref{eq:1.1}). The linearization of (\ref{eq:1.1RN}) at $u$ is then given by
the equation  
\begin{equation}
  \label{eq:linearization-B}
\Ds w+\l w-p u^{p-1}w= 0 \qquad \textrm{ in $B$,}\qquad \qquad  w\in \cH^s(B),
\end{equation}
which is closely related to the weighted eigenvalue problem 
\be \label{eq:weighted-eigen}
\Ds w+\l w=\L u^{p-1}w\qquad \textrm{ in $B$,}\qquad \qquad w\in \cH^s(B)
\ee
with eigenvalue parameter $\L\in \R$. In particular, if $\Lambda=p$ is not an eigenvalue of \eqref{eq:weighted-eigen}, then (\ref{eq:linearization-B}) has no nontrivial solution and $u$ is nondegenerate. We note that the compact embedding $\cH^s(B) \hookrightarrow L^2(B;u^{p-1}dx)$ implies that the set of eigenvalues of \eqref{eq:weighted-eigen} is given as a sequence $0<\L_1 <\L_2 \leq \dots$, with each eigenvalue being counted with multiplicity. Since $\Ds$ commutes with rotations, eigenfunctions corresponding to the eigenvalues $\L_i$ can be chosen to be contained in one of the spaces $\cH^s_r(B)$ and $\cH^s_{nr}(B)$. Moreover, we note that $\L_1$ is a simple eigenvalue with an eigenspace spanned by a positive radial eigenfunction. All other eigenfunctions of \eqref{eq:weighted-eigen} are orthogonal to the first eigenfunction with respect to the scalar product in $L^2(B;u^{p-1}dx)$, and therefore they have to change sign. Since $u$ solves \eqref{eq:weighted-eigen} with $\L=1$ and is positive, we conclude a posteriori that $\Lambda_1 = 1$ is the first eigenvalue of \eqref{eq:weighted-eigen}.

It is now natural to call $u$ {\em radially} resp. {\em nonradially nondegenerate} if (\ref{eq:linearization-B}) does not admit nontrivial solutions in $\cH^s_r(B)$, $\cH^s_{nr}(B)$, respectively. In particular, $u$ is {\em nonradially nondegenerate} if $\L = p$ is not an eigenvalue of \eqref{eq:weighted-eigen} admitting a nonradial eigenfunction\footnote{ We note here that a nonradial eigenfunction has a nontrivial $\langle \cdot, \cdot \rangle_s$-orthogonal projection on $\cH^s_{nr}(B)$ which is also an eigenfunction of \eqref{eq:weighted-eigen}}. Moreover, using again the compactness of the embedding $\cH^s(B) \hookrightarrow L^2(B;u^{p-1}dx)$, it is easy to see that the lowest eigenvalue $\Lambda_{nr}(u,B)$ of \eqref{eq:weighted-eigen} associated with nonradial eigenfunctions admits the variational characterization 
\begin{equation}
  \label{eq:var-char-l-2-ball-case}
\L_{nr}(u,B):=\inf \Bigl\{[w]_s^2 +{\l} \int_B w^2dx  \::\: w\in \cH^s_{nr}(B),\: \int_{B} w^2 u^{p-1}dx= 1 \Bigr\},
\end{equation}
and Theorem~\ref{th:nondeg}(i) is equivalent to the inequality $\L_{nr}(u,B) >p$.

Similar considerations apply to fixed radial solutions $u\in H^s(\R^N)$ of (\ref{eq:1.1RN}). We then consider the weighted  eigenvalue problem 
\be \label{eq:weighted-eigen-R^N}
\Ds w+ w=\L u^{p-1}w\qquad \textrm{ in $\R^N$,}\qquad \qquad  w\in H^s(\R^N)
\ee
with the positive radial and bounded weight function $u^{p-1}$. Since $u^{p-1}(x) \to 0$ as $|x| \to \infty$, the embedding $H^s(\R^N) \hookrightarrow L^2(\R^N,u^{p-1}dx)$ is compact. Therefore problem \eqref{eq:weighted-eigen-R^N} also admits a sequence of eigenvalues $0<\L_1<\L_2\leq \dots$, whereas $\Lambda_1 = 1$ is the first eigenvalue and $u$ is a corresponding eigenfunction.
Moreover, the linearized equation 
\begin{equation}
  \label{eq:linearization-r-n}
\Ds w+ w-p u^{p-1}w= 0 \qquad \textrm{ in $\R^N$,}\qquad \qquad  w\in H^s(\R^N),
\end{equation}
admits solutions of the form $v:=\n  u\cdot  \nu$ with $\nu \in \R^N$. Indeed, it follows from Lemma~\ref{lem:qual-sol} below that these functions are contained in the space $C^\infty(\R^N)\cap H^s_{nr}(\R^N)$ and are pointwise solutions of (\ref{eq:linearization-r-n}). From a more geometric point of view, this degeneracy is due to the translation invariance of equation~(\ref{eq:1.1RN}). As a consequence of this nonradial degeneracy, the lowest eigenvalue of \eqref{eq:weighted-eigen-R^N} associated with nonradial eigenfunctions 
\begin{equation}
  \label{eq:var-char-l-2-rn-case}
\L_{nr}(u,\R^N):=\inf \Bigl\{[w]_s^2 +{\l} \int_{\R^N} u^2dx  \::\: w\in H^s_{nr}(\R^N),\: \int_{\R} w^2 u^{p-1}dx= 1 \Bigr\},
\end{equation}
satisfies the inequality $\Lambda_{nr}(u,\R^N) \le p$. Theorem~\ref{th:nondeg}(ii) now implies that $\Lambda_{nr}(u,\R^N) = p$ and that the derivatives $v = \n  u\cdot  \nu$, $\nu \in \R^N$ are the only nonradial solutions of (\ref{eq:linearization-r-n}). Similarly as for (\ref{eq:1.1}),  a solution $u$ to \eqref{eq:1.1RN} is called \textit{radially nondegenerate} if  \eqref{eq:linearization-r-n} does not admit a nontrivial radial solution.

Our proof of Theorem~\ref{th:nondeg} is based on a new Picone type identity, which we shall apply to eigenfunctions of \eqref{eq:weighted-eigen} and of \eqref{eq:weighted-eigen-R^N} which are antisymmetric with respect to the reflection at a hyperplane containing the origin. In the case of the ball, this identity directly rules out the existence of antisymmetric eigenfunctions corresponding to eigenvalues $\ov \L \le p$, and this is sufficient since any eigenfunction can be written as a sum of its symmetric and antisymmetric part with respect to a hyperplane reflection, and the antisymmetric part must be nonzero for at least one hyperplane containing the origin if the eigenfunction is nonradial. We note here that, very recently, a different proof of Theorem~\ref{th:nondeg}(i), based on polarization, has been given independently in the revised version of \cite{DIS-1}. 
The proof of Theorem~\ref{th:nondeg}(ii) is more involved, and we need to combine the Picone identity with a new geometric lemma in order to characterize the directional derivatives $\n  u\cdot  \nu $ as the only functions in $H^s_{nr}(\R^N)$ which solve (\ref{eq:linearization-r-n}).    We note that the argument used to prove  Theorem~\ref{th:nondeg}(ii)  is purely nonlocal and therefore can be generalized to   a larger class of nonlocal operators, see Remark \ref{lem:rema-gen-non} below.\\
Theorem~\ref{th:nondeg} leaves open the problem of radial (and therefore full) nondegeneracy and uniqueness of positive solutions to (\ref{eq:1.1RN}) and (\ref{eq:1.1}). As remarked above, these properties have already been established for Morse index one solutions of (\ref{eq:1.1RN}), whereas very little is known for (\ref{eq:1.1}) and for the more general class of all positive solutions. In fact, in addition to the very special result of Amick and Toland \cite{AT} for the case $s=1/2$, $p=2$ and $N=1$, we are only aware of the very recent paper \cite{DIS} which provides interesting and useful uniqueness results within borderline ranges of parameters $s$ and $p$, assuming more precisely that $s$ is close to $1$ or $p$ is close to $1$ or $2^{*}_s-1$.

Our second main result of the paper seems to be the first result which is valid for all $p$ and $s$, but we need to restrict to the ball case in space dimension $N=1$ and to  $\lambda=0$.\footnote{In a previous version of the paper, we stated uniqueness of positive solutions of (\ref{eq:1.1RN}) and (\ref{eq:1.1}) for the full range of parameters $N$, $\lambda$, $s$ and $p$. However, our proof contained a sign mistake within a polarization argument to exclude the existence of radial eigenfunctions corresponding to the second eigenvalue $\Lambda_2$ in \ref{eq:weighted-eigen} and \ref{eq:weighted-eigen-R^N}. Essentially the same mistake has been made in earlier versions of \cite{Azahara-Parini} and \cite{DIS-1}.} 

\begin{theorem}\label{new-uniqueness-intervall}
  Let $s\in (0,1)$, $N= 1$, $1<p<2^*_s-1$ and $\lambda=0$, so that (\ref{eq:1.1}) writes as
  \begin{equation}
    \label{eq:1.1-simplified}
\Ds u = u^{p}\quad\text{ in ${B}=(-1,1)$},\qquad u>0 \quad\textrm{in ${B}$}, \quad u \in \cH^s(B)
  \end{equation}
  Then the problem (\ref{eq:1.1-simplified}) has a unique   solution which is fully nondegenerate in the sense that $\Lambda_2>p$ for the second eigenvalue $\Lambda_2$ of the weighted eigenvalue problem
\be \label{eq:weighted-eigen-simplified}
\Ds w =\L u^{p-1}w\qquad \textrm{ in $B$,}\qquad \textrm{$w\in \cH^s(B)$. }
\ee
\end{theorem}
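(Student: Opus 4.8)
The plan is to combine Theorem~\ref{th:nondeg}(i) with a careful analysis of the \emph{radial} part of the weighted eigenvalue problem \eqref{eq:weighted-eigen-simplified}, and then to bootstrap full nondegeneracy into uniqueness via a continuation/degree argument along the parameter $p$. First I would observe that in dimension $N=1$ the ``nonradial'' functions in $\cH^s((-1,1))$ are exactly the odd functions, while the ``radial'' ones are the even functions; Theorem~\ref{th:nondeg}(i) then asserts precisely that the lowest eigenvalue of \eqref{eq:weighted-eigen-simplified} restricted to odd eigenfunctions is strictly larger than $p$. So to get $\Lambda_2 > p$ it remains to control the \emph{even} eigenfunctions: I must show that the second \emph{even} eigenvalue of \eqref{eq:weighted-eigen-simplified} also exceeds $p$, i.e. that $\Lambda_1 = 1$ is the only even eigenvalue $\le p$. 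The key structural input is that the first eigenfunction is $u$ itself (up to scaling), which is positive, even, and strictly decreasing in $|x|$; any second even eigenfunction $\varphi$ is $L^2(u^{p-1}dx)$-orthogonal to $u$, hence changes sign, and being even it must vanish at some $x_0 \in (0,1)$ with $\varphi$ of one sign on $(-x_0,x_0)$ and the opposite sign outside.

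The main obstacle — and the step that consumed the authors' earlier erroneous argument (cf.\ the footnote) — is precisely ruling out such an even sign-changing eigenfunction with eigenvalue $\le p$. I would attack this with the same Picone-type identity that drives Theorem~\ref{th:nondeg}, but applied in a \emph{one-dimensional radial} rather than antisymmetric setting. Concretely: suppose $\Ds \varphi = \Lambda u^{p-1}\varphi$ with $\Lambda \le p$ and $\varphi$ even and sign-changing. On the one hand $u$ solves $\Ds u = u^p = u^{p-1}u$, so $u$ is a positive ``eigenfunction'' at level $\Lambda = 1 < p$. The idea is to test the equation for $\varphi$ against $\varphi^2/u$ (or a regularized version $\varphi^2/(u+\delta)$), exploiting the nonlocal Picone inequality $ (a-b)\bigl(\tfrac{a^2}{c} - \tfrac{b^2}{d}\bigr) \le \bigl(\tfrac{a}{c}-\tfrac{b}{d}\bigr)\,\cdot$(something) together with the bilinear form $[\cdot,\cdot]_s$, to derive
$$
p\int_B u^{p-1}\varphi^2\,dx \ge \Lambda \int_B u^{p-1}\varphi^2\,dx = [\varphi,\varphi]_s \ge \int_B \frac{\Ds u}{u}\,\varphi^2\,dx = \int_B u^{p-1}\varphi^2\,dx,
$$
which by itself is not a contradiction; the refinement is that the Picone defect term is \emph{strictly} positive unless $\varphi/u$ is constant, and $\varphi/u$ cannot be constant since $\varphi$ changes sign while $u>0$. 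Making the defect term quantitatively usable — so that one actually reaches $\Lambda > p$ rather than merely $\Lambda > 1$ — is the delicate point, and I expect it requires using the specific Pohozaev/scaling structure special to $\lambda = 0$ and $N=1$: differentiating the equation in the scaling parameter $u_\mu(x) = \mu^{2s/(p-1)}u(\mu x)$ produces an explicit even solution of the linearized equation on a rescaled ball, and comparing it with $\varphi$ via the Sturm-type nodal information (the scaling direction is positive, $\varphi$ has exactly one sign change) pins $\Lambda_2$ strictly above $p$. This is where I anticipate the real work, and why the theorem is restricted to $N=1$, $\lambda=0$.

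Once full nondegeneracy ($\Lambda_2 > p$, hence the linearized operator $\Ds - p u^{p-1}$ on $\cH^s((-1,1))$ is invertible) is established for \emph{every} positive solution $u$ at \emph{every} $p \in (1, 2^*_s-1)$, uniqueness follows by a standard global implicit function theorem / Leray--Schauder degree continuation in $p$. The plan here: existence of a positive solution for each $p$ is classical (mountain pass), the a priori bounds on $\|u\|_{L^\infty}$ and $\|u\|_s$ are uniform on compact $p$-intervals by the regularity theory of \cite{XJV,RS16a}, and nondegeneracy guarantees that the set of solutions forms a $C^1$ curve with no bifurcation or turning points; since at some ``easy'' reference exponent (e.g.\ $p$ close to $1$, using \cite{DIS}, or via a direct argument) the solution is unique, the curve cannot split, so uniqueness propagates to all $p$. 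Alternatively, and perhaps more cleanly, one fixes $p$ and runs the degree argument directly: full nondegeneracy implies every solution has local degree $+1$ (the Morse index being finite and even, in fact zero once we know $\Lambda_1 = 1$ is the only eigenvalue below $1$... actually below $p$ there could be several, so one checks the parity via $\Lambda_{nr} > p$ and $\Lambda_2 > p$ giving exactly index... I would verify the Morse index is $1$, hence degree $-1$, consistently for all solutions), so the total degree forces uniqueness. The routine part is the degree bookkeeping; the conceptual content is entirely in the nondegeneracy estimate $\Lambda_2 > p$.
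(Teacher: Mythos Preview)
Your overall architecture matches the paper: split into odd and even eigenfunctions, dispatch the odd part via Theorem~\ref{th:nondeg}(i), prove even (radial) nondegeneracy separately, and then run a continuation argument in $p$ starting from the asymptotic uniqueness of \cite{DIS}. The continuation part is essentially what the paper does (implicit function theorem in a $\calC^{2s+\beta}_0$-type space rather than a degree argument, but the idea is the same and your sketch is fine there).

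The genuine gap is the radial nondegeneracy step. The Picone inequality you write down yields only $\Lambda > 1$, as you note yourself; the defect term carries no information near $\Lambda = p$. Your proposed upgrade via the scaling direction $v = \tfrac{2s}{p-1}u + xu'$ together with ``Sturm-type nodal information'' does not go through: $v$ is \emph{not} positive (near $\partial B$ one has $u \sim (1-|x|)^s$ but $xu' \sim -s(1-|x|)^{s-1} \to -\infty$), $v \notin \cH^s(B)$ so it cannot be used as a test function, and, more fundamentally, there is no Sturm oscillation or comparison theory for $(-\Delta)^s$ on which a nodal comparison between $v$ and $\varphi$ could rest. This is exactly the ODE tool whose absence makes the problem hard.

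The paper's route is quite different. The scaling structure is used, but only in integrated form: a fractional Pohozaev integration-by-parts identity (testing the equation for an even eigenfunction $w$ at level $p$ against $xu'$) forces the fractional normal derivative $\psi_w(1)=\lim_{|x|\to 1} w(x)/(1-|x|)^s$ to vanish when $\lambda=0$. One then passes to the Caffarelli--Silvestre extension $W$ of $w$ on $\overline{\R^2_+}$ and analyses its nodal domains. Two facts are established: (a) no nodal domain of $W$ can be contained in $\{x>0\}$, since its odd reflection would contradict Theorem~\ref{th:nondeg}(i); and (b) $\psi_w(1)=0$ forces $W$ to change sign in every neighbourhood of the boundary point $(1,0)$, via a Hopf-type barrier. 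A topological curve-intersection lemma in $\overline{\R^2_+}$ then shows (a) and (b) are incompatible. In short, the missing fractional Sturm theory is replaced by nodal-domain topology of the $s$-harmonic extension, and the only remnant of your scaling idea is the Pohozaev identity that gives $\psi_w(1)=0$.
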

Our next result   is devoted to  nondegeneracy and uniqueness of even solutions to the problems \eqref{eq:1.1RN} and \eqref{eq:1.1}   in the special case $p=2$ and $s>\frac{1}{6}$.  We note the lower bound on $s$ comes from  the fact that, for $N=1$, we have 
 $$ 
2<2^*_s-1= \frac{1+2s}{(1-2s)_+} \qquad\Longleftrightarrow\qquad  s>\frac{1}{6}.
 $$
Up to date,  the only \textit{general} uniqueness result,  up to translation,  of problem  (\ref{eq:1.1RN}) is due to Amick and Toland \cite{AT}, and it is concerned with  the special case $N=1$, $s=1/2$ and $p=2$.  We extend their result to the full subcritical range of the  parameter $s>1/6$. 
\begin{theorem}\label{new-uniqueness-AT}
  Let $N=1$,  $s\in (\frac{1}{6},1)$, $p=2$ and $\l>0$.
  Then each of the the problems \eqref{eq:1.1RN} and (\ref{eq:1.1})  admits a unique even and radially nondegenerate   solution.
\end{theorem}
We  observe that  the lower bound $\frac{1}{6}$ is optimal since \eqref{eq:1.1RN} and (\ref{eq:1.1})  do not have a  solution if $2\geq 2^*_s-1$ as a consequence of the fractional Pohozaev identity (see \cite{RX-Poh}).
\begin{remark}
We recall that \cite{AT} proved also a uniqueness property of nonconstant positive periodic solutions to $\Ds u+u=u^2$ on $\R$, for $s=\frac{1}{2}$,  which corresponds to periodic solitary waves of the Benjamin-Ono equation. We believe that,  for values of  $s\not=\frac{1}{2}$,    the arguments in the present paper can be also used to prove uniqueness of even   nonconstant $T$-periodic solutions which are decreasing on $(0,\frac{T}{2})$.   We refer to \cite{CCM} which deals with the existence of such type of solutions for more general nonlocal problems.  See also \cite{BGQ,Sa} and the references therein  for the existence of nonconstant  periodic solution to this problem.
\end{remark}

The proofs of  Theorem \ref{new-uniqueness-intervall} and Theorem~\ref{new-uniqueness-AT} combine various tools. The first step is to prove the radial nondegeneracy of every positive solution of (\ref{eq:1.1-simplified}), and the second step is a continuation argument based on varying the exponent $p$ and using the already available asymptotic uniqueness result given in \cite[Theorem 1.6]{DIS} for the case $p>1$. In the proof of the radial nondegeneracy of solutions to (\ref{eq:1.1-simplified}), which is stated in Theorem~\ref{radial-nondegeneracy-combined} below, we shall first use a fractional integration by parts formula to show that every radial eigenfunction $w$ of \eqref{eq:weighted-eigen-simplified} corresponding to the eigenvalue $\L = p$  must have a vanishing fractional boundary derivative. From this property, we then derive information on the shape of nodal domains of the $s$-harmonic extension (also called Caffarelli-Silvestre extension) $W$ of $w$. Using this information, we then construct an odd function in $\cH^s(B) \setminus \{0\}$ for which equality holds in \eqref{eq:B-th-eq} with $\lambda = 0$, $\ov \L\le p$, contradicting Theorem~\ref{th:nondeg}. We conjecture that a similar strategy might work for the more general problems (\ref{eq:1.1RN}) and (\ref{eq:1.1}) in the case $N=1$, but this remains open unless for $p=2$, where special cancellations occur and allow to complete the proof of Theorem~\ref{new-uniqueness-AT}. We stress here that our proof of Theorem~\ref{new-uniqueness-AT} does not use complex analysis and is therefore quite different from the argument of Amick and Toland in \cite{AT} which only applies to the special case $s=\frac{1}{2}$. 

By similar methods as in the proof of Theorem~\ref{new-uniqueness-intervall}, we also prove the following result  on uniqueness and nondegeneracy of ground state solutions to   (\ref{eq:1.1}) in the special case $N=1$ and $\l>0$.  Here, following the notation of \cite{FLS}, a ground state solution $u$ is defined as a Morse index one solution, i.e., a solution $u$ with the property that $\Lambda_2 \ge p$ for the second eigenvalue $\Lambda_2$ of \eqref{eq:weighted-eigen}. We note that this class of solutions include least energy solutions of (\ref{eq:1.1}). The result parallels those of \cite{FLS} for (\ref{eq:1.1RN}) in the special case $N=1$.

\begin{theorem}\label{new-uniqueness-intervall-GS}
  Let $s\in (0,1)$, $N= 1$, $1<p<2^*_s-1$ and $\lambda>0$, so that (\ref{eq:1.1}) writes as
  \begin{equation}
    \label{eq:1.1-simplified-GS}
\Ds u+\l u = u^{p}\quad\text{ in ${B}=(-1,1)$},\qquad u>0 \quad\textrm{in ${B}$}, \quad u = 0\quad\text{in $\R \setminus {B}$.}    
  \end{equation}
  Then the problem (\ref{eq:1.1-simplified-GS}) has a unique  ground state solution solution which is fully nondegenerate in the sense that $\Lambda_2>p$ for the second eigenvalue $\Lambda_2$ of the weighted eigenvalue problem
\be \label{eq:weighted-eigen-simplified-GS}
\Ds w +\l w =\L u^{p-1}w\qquad \textrm{ in $B$,}\qquad \textrm{$w\in \cH^s(B)$. }
\ee
\end{theorem}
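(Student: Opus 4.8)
The plan is to mimic the strategy outlined for Theorem~\ref{new-uniqueness-intervall}, adapting it to the presence of the mass term $\lambda > 0$ and to the restricted (ground state) class of solutions. Fix a positive ground state solution $u \in \cH^s(B)$ of \eqref{eq:1.1-simplified-GS}, so that $\Lambda_2 \ge p$ in \eqref{eq:weighted-eigen-simplified-GS}. By Theorem~\ref{th:nondeg}(i) applied with $\lambda > 0$, the lowest nonradial eigenvalue satisfies $\Lambda_{nr}(u,B) > p$, so any eigenfunction of \eqref{eq:weighted-eigen-simplified-GS} at an eigenvalue $\Lambda \le p$ must be radial. Combined with the ground state assumption $\Lambda_2 \ge p$, the only possibly dangerous case for degeneracy is a \emph{radial} eigenfunction at $\Lambda = p$; so the first main step is to prove radial nondegeneracy, i.e. $\Lambda_2 > p$. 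This is the analogue of Theorem~\ref{radial-nondegeneracy-combined} and should be proved by the same mechanism: if $w$ is a radial eigenfunction of \eqref{eq:weighted-eigen-simplified-GS} with $\Lambda = p$, a fractional integration-by-parts identity (now carrying the harmless lower-order term $\lambda \int_B w^2$) forces the fractional boundary derivative of $w$ to vanish; passing to the Caffarelli--Silvestre extension $W$ on $\R^{N+1}_+ = \R^2_+$ (here $N=1$), the vanishing Neumann-type data constrains the nodal structure of $W$, and one builds an \emph{odd} competitor in $\cH^s(B) \setminus \{0\}$ achieving equality in \eqref{eq:B-th-eq} with $\ov\L \le p$, contradicting Theorem~\ref{th:nondeg}(i). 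The $\lambda > 0$ term enters only as an additive $\lambda \int_B w^2$ on both sides of the relevant quadratic-form identities and does not affect the sign bookkeeping, since it is the same on the eigenvalue side and the Rayleigh-quotient side.

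The second step is a continuation/degree argument in the parameter $p$, paralleling the use of \cite[Theorem 1.6]{DIS}. Having established that every positive ground state solution of \eqref{eq:1.1-simplified-GS} is fully nondegenerate, the implicit function theorem gives that the set of pairs $(p,u)$ with $u$ a positive ground state solution is a one-dimensional manifold that is locally a graph over $p \in (1, 2^*_s-1)$. To conclude uniqueness for each fixed $p$, I would run a connectedness argument: show the set of $p$ for which \eqref{eq:1.1-simplified-GS} has a unique positive ground state solution is open (by nondegeneracy and IFT), nonempty (by the asymptotic uniqueness result of \cite{DIS} as $p \to 1^+$, noting that least energy solutions are ground states in our sense and so the relevant solution set is captured), and closed (by a priori bounds: ground state solutions have uniformly bounded energy, hence bounded $\cH^s(B)$-norm by the subcritical Sobolev embedding and elliptic regularity of \cite{RS16a,XJV}, so along a sequence $p_n \to p$ one extracts a convergent subsequence to a positive solution which inherits Morse index one by lower semicontinuity of the eigenvalues, then nondegeneracy prevents bifurcation/splitting). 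Since $(1,2^*_s-1)$ is connected, uniqueness holds for all admissible $p$, and the unique ground state is fully nondegenerate by the first step.

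The main obstacle, as in Theorem~\ref{new-uniqueness-intervall}, is the radial nondegeneracy step: one must show that a radial eigenfunction $w$ at $\Lambda = p$ with vanishing fractional boundary derivative cannot exist. The delicate part is the nodal-domain analysis of the extension $W$ on $\R^2_+$ and the construction of the odd competitor: one needs to track how many sign changes $w$ has on $(-1,1)$, use the fact that $u$ itself (extended) has a definite nodal geometry, and exploit that $W$ vanishes to higher order on $\partial B \times \{0\}$ to "fold" $W$ across a point and produce a nonzero odd function with the same or smaller Rayleigh quotient. One must also check that the $\lambda > 0$ term is genuinely innocuous here — it appears inside $\int_B \cdot \, dx$ terms which transform covariantly under the folding/reflection used to build the competitor, so equality in \eqref{eq:B-th-eq} is preserved — and that, when $N=1$, the relevant "directional derivative" obstruction of Theorem~\ref{th:nondeg}(ii) is irrelevant because we are in the Dirichlet ball setting where translation invariance is broken. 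A secondary technical point is to confirm that the notion of ground state used here (Morse index one, $\Lambda_2 \ge p$) is stable under the limit $p_n \to p$ and contains the least energy solutions, so that \cite{DIS} is genuinely applicable as the anchor of the continuation.
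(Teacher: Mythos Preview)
Your overall architecture (radial nondegeneracy $\Rightarrow$ full nondegeneracy via Theorem~\ref{th:nondeg}(i), then a continuation argument in $p$ anchored at \cite{DIS}) matches the paper. The genuine gap is in the radial nondegeneracy step, where your claim that the $\lambda$-term is ``harmless'' and that the fractional boundary derivative of a radial eigenfunction $w$ at $\Lambda=p$ must vanish is incorrect. The integration-by-parts identity (Lemma~\ref{lemma-radial-boundary-general-eigenfunction-M1} in the paper) gives
\[
\psi_w(1)\;=\;-\,\frac{s\lambda}{\Gamma^2(1+s)\,\psi_u(1)}\int_B u\,w\,dx,
\]
so for $\lambda>0$ one only knows $\psi_w(1)$ is proportional to $-\int_B u w\,dx$, not that it is zero. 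Consequently the $\lambda=0$ mechanism you describe (vanishing $\psi_w(1)$ $\Rightarrow$ sign change of the extension $W$ near $(1,0)$ $\Rightarrow$ odd competitor contradicting Theorem~\ref{th:nondeg}(i)) does not go through.

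What the paper actually does for $\lambda>0$ uses the ground state hypothesis $\Lambda_2\ge p$ in an essential way that your sketch does not: assuming a nontrivial even eigenfunction $w$ at $\Lambda=p$ forces $\Lambda_2=p$, and then the Courant-type nodal theorem of \cite[Prop.~5.2]{FLS} gives that the $s$-harmonic extension $W$ has \emph{exactly two} nodal domains. Combining this with Lemmas~\ref{criterion-nodal-domain} and~\ref{nodal-domains-extension-corollary} yields that $w$ changes sign exactly once on $(0,1)$, say $w\ge 0$ on $[-r_0,r_0]$ and $w\le 0$ outside. From this and the formula above one gets $\psi_w(1)\le 0$, hence $\int_B u w\,dx\ge 0$ (here $\lambda>0$ is used). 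On the other hand, the orthogonality $\int_B u^p w\,dx=0$ (which comes from the \emph{same} integration-by-parts computation) together with the monotonicity of $u^{p-1}$ gives $\int_B u w\,dx<0$, a contradiction. So the $\lambda>0$ argument is a sign comparison exploiting Courant's nodal bound, not a vanishing-boundary-derivative/odd-competitor construction; your proposal transplants the wrong half of Theorem~\ref{radial-nondegeneracy-combined}.
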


As mentioned above, the proof of Theorem~\ref{new-uniqueness-intervall-GS} is merely a variant of the proof of Theorem~\ref{new-uniqueness-intervall}, as it also relies on the two-dimensional nature of the extension problem related to \ref{eq:weighted-eigen-simplified-GS}. Theorem~\ref{new-uniqueness-intervall-GS} leaves open the uniqueness and nondegeneracy of positive {\em ground state solutions} of (\ref{eq:1.1}) in the higher dimensional unit ball $B \subset \R^N$, $N \ge 2$. By a rather different and much more involved argument based on a new characterization of sign properties of second radial Dirichlet eigenfunctions of fractional Schrödinger type operators in the unit ball, we prove the  analogue of Theorem~\ref{new-uniqueness-intervall-GS} for all dimensions $N \ge 1$ and all $\lambda \ge 0$ in the recent paper \cite{fall-weth-m1-2024}. We point out that there is no overlap of the present paper with \cite{fall-weth-m1-2024}. In particular, the
arguments in \cite{fall-weth-m1-2024} neither yield Theorem~\ref{new-uniqueness-intervall} nor Theorem~\ref{new-uniqueness-AT}. We also point out that, in the special case $\lambda=0$, uniqueness and nondegeneracy of positive ground state solutions of (\ref{eq:1.1}) in the higher dimensional case has been proved very recently and independently from our work in the most recent version of \cite{Azahara-Parini}.

For the final result of the paper, we go back to the full space problem (\ref{eq:1.1RN}) in the special case $s = \frac{1}{2}$ and $p = 1 + \frac{2}{N+1}$. In this case, we can compute the unique ground state solution of (\ref{eq:1.1RN}) up to translation and deduce a sharp fractional Gagliardo-Nirenberg inequality from this computation. More precisely, we have the following.

\begin{theorem}
  \label{new-gagliardo-nirenberg-uniqueness}
Let $p= 1+ \frac{2}{N+1}$. Then every ground state solution of the equation
\begin{align}\label{eq:1.1RN-special-case}
(-\Delta)^{\frac{1}{2}} u+ u  = u^{p}\quad\text{ in $\R^N$},\qquad u>0 \quad\textrm{ in $\R^N$}, \qquad  u\in H^{1/2}(\R^N)
\end{align} 
is of the form $u_a(x)= u(x-a)$, $a \in \R^N$ with
\begin{equation}
  \label{eq:u-explicit-intro}
u(x)= \Bigl(\frac{N+1}{1+|x|^2}\Bigr)^{\frac{N+1}{2}}.
\end{equation}
Moreover, the optimal constants $C_S, C_{GN}>0$ in the associated fractional Sobolev type and Gagliardo-Nirenberg inequalities 
\begin{equation}
  \label{eq:Sobolev-type}
\|v\|_{L^{q}(\R^N)} \le C_S \|v\|_s, \qquad v \in H^{1/2}(\R^N) 
\end{equation}
and 
\begin{equation}
  \label{eq:GN-type}
\|v\|_{L^{q}(\R^N)} \le C_{GN} [v]_s^{\frac{2}{N+2}}  \|u\|_{L^2(\R^N)}^{\frac{2}{N+2}},\qquad v \in H^{1/2}(\R^N) 
\end{equation}
with $q=p+1= \frac{2(N+2)}{N+1}$ are given by
$C_S =(N+1)^{-\frac{N+1}{2(N+2)}} \Bigl( \frac{(N+2) \pi^{N/2}\Gamma(\frac{N}{2}) }{4 \Gamma(N)}  \Bigr)^{-\frac{1}{2(N+2)}}$
 and $C_{GN} = \Bigl(\frac{N^N (N+1)^{N+1} \pi^{\frac{N}{2}}\Gamma(\frac{N}{2})}{(N+2)^{N+1}\:\Gamma(N)}\Bigr)^{-\frac{1}{2(N+2)}}.$
\end{theorem}
Note that if  $N=1$, then $1+ \frac{2}{N+1}=2$,   so that  \eqref{eq:1.1RN-special-case} corresponds to the Benjamin-Ono equation,  and  the explicit solution given by \eqref{eq:u-explicit-intro} was found by Benjamin \cite{Ben}.\\
The fact that $u$ given in (\ref{eq:u-explicit-intro}) solves (\ref{eq:1.1RN-special-case}) will be shown with a Fourier transform argument. To show that $u$ is a ground state solution, we apply Theorem~\ref{th:nondeg} and an argument inspired by \cite[Section 7.1]{FLS}. The uniqueness up to translations within the class of ground state solutions then follows by \cite[Theorem 4]{FLS}. Since the best constant $C_S$ in (\ref{eq:Sobolev-type}) is attained for $v=u$, it can then be computed easily. Moreover, the constants $C_S$ and $C_{GN}$ are related by a general principle (see Lemma~\ref{lem-Sobolev-Gagliardo-Nirenberg} below), and this allows to compute $C_{GN}$. 

The paper is organized as follows.
In Section~\ref{sec:preliminaries} we first derive preliminary information on the shape and regularity of positive solutions of (\ref{eq:1.1RN}) and (\ref{eq:1.1}). In Section \ref{sec:nondegeneracy}, we then derive a Picone type identity for antisymmetric functions, which is a key tool in the analysis of nonradial solutions of the linearized problems associated with (\ref{eq:1.1RN}) and (\ref{eq:1.1}).
In Section~\ref{sec:nonr-nond}, we then complete the proofs of the nonradial nondegeneracy results given in Theorems~\ref{th:nondeg}.
In Section~\ref{sec:radi-nond-spec}, we prove the radial nondegeneracy of positive solutions of (\ref{eq:1.1-simplified}), of positive ground state solutions of (\ref{eq:1.1-simplified-GS}) and of positive even solutions of (\ref{eq:1.1RN}) and (\ref{eq:1.1}) in the case $N=1$, $p=2$ and $s > \frac{1}{6}$. Section~\ref{sec:uniqueness-case-ball} is then devoted to the continuation argument which completes the proofs of Theorems~\ref{new-uniqueness-intervall},~\ref{new-uniqueness-AT} and~\ref{new-uniqueness-intervall-GS}. Section~\ref{special-case} contains the proof of Theorem~\ref{new-gagliardo-nirenberg-uniqueness}. Finally, in the appendix, we first provide a geometric lemma which is also used in the analysis of nonradial eigenfunctions.  Moreover, we provide a variant of the fractional Hopf type lemma in \cite[Prop. 2.2]{SV} with somewhat weaker assumptions, which is essential in our context. Finally, we state a topological lemma on curve intersection which is used in the analysis of nodal domains of the $s$-harmonic extensions of eigenfunctions of \eqref{eq:weighted-eigen-simplified} and \eqref{eq:weighted-eigen-simplified-GS}.

\section{Preliminaries}
\label{sec:preliminaries}

In this section, we collect some preliminary information on solutions of (\ref{eq:1.1RN}) and (\ref{eq:1.1}).

\begin{lemma}\label{lem:qual-sol}
  \begin{itemize}
  \item[(i)] Let $u \in \cH^s(B)$ satisfy (\ref{eq:1.1}). Then $u\in C^\infty(B)\cap C^s(\R^N)$, and $u$ is radially symmetric and  strictly decreasing. Moreover,
    \begin{equation}
      \label{eq:qual-sol-eq-1}
    \inf_{x\in B}\frac{u(x)}{(1-|x|)^s}>0
    \end{equation}
    and
    \begin{equation}
      \label{eq:qual-sol-eq-2}
    \lim_{|x| \nearrow 1} (1-|x|)^{1-s}\n u(x)\cdot x <0.
    \end{equation}
  \item[(ii)]   Let $u\in H^s(\R^N)$ satisfy \eqref{eq:1.1RN}.    Then 
 $u\in   C^\infty(\R^N)\cap H^{1+2s}(\R^N)$. Moreover $u$ is radially symmetric about a point $x_0\in \R^N$, strictly decreasing  as a function of $|x-x_0|$ and satisfies $u(x) \to 0$ as $|x| \to \infty$.
  \end{itemize}
\end{lemma}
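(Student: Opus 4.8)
The plan is to quote the regularity and symmetry assertions from the literature and to concentrate the actual work on the boundary estimates \eqref{eq:qual-sol-eq-1} and \eqref{eq:qual-sol-eq-2} of part~(i). For part~(i), the regularity $u\in C^\infty(B)\cap C^s(\R^N)$ will follow from the boundedness result of \cite{XJV} together with the interior and boundary regularity theory of \cite{RS16a}, via a standard bootstrap applied to $(-\Delta)^s u=u^p-\l u$, which lies in $L^\infty(B)\cap C^\infty(B)$ once $u$ is bounded and smooth in $B$; radial symmetry and strict monotonicity are proved in \cite{JW}, and in \cite{BLW} for $\l=0$. For part~(ii), the statements $u\in C^\infty(\R^N)\cap H^{1+2s}(\R^N)$, radial symmetry about a point $x_0$, strict monotonicity in $|x-x_0|$, and $u(x)\to0$ as $|x|\to\infty$, are all contained in \cite[Proposition~3.1]{FLS}; the regularity follows from a standard Sobolev/Schauder bootstrap for $(-\Delta)^s u=u^p-\l u$, and the remaining properties from the moving-plane and rearrangement arguments given there. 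Thus the only assertions that require an argument here are \eqref{eq:qual-sol-eq-1} and \eqref{eq:qual-sol-eq-2}.

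To establish \eqref{eq:qual-sol-eq-1}, write $\delta(x):=1-|x|=\dist(x,\partial B)$ for $x\in B$. First, testing the weak formulation of (\ref{eq:1.1}) against nonnegative $v\in\cH^s(B)$ and using $u^p\ge0$ shows that $u$ is a nonnegative, nontrivial weak supersolution of $(-\Delta)^s+\l$ in $B$. Since $u\in C^\infty(B)$ solves $(-\Delta)^s u=u^p-\l u$ pointwise, a hypothetical interior zero of $u$ would be a global minimum, forcing $(-\Delta)^s u\le0$ there with equality only if $u\equiv0$, while the equation gives $(-\Delta)^s u=0$ at that point; hence $u>0$ in $B$. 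Near $\partial B$ I will invoke the fractional Hopf-type lemma established in the appendix (a variant of \cite[Prop.~2.2]{SV}): from $(-\Delta)^s u+\l u=u^p\ge0$ in $B$, $u\ge0$, $u\not\equiv0$, it provides constants $c_0>0$ and $\rho\in(0,1)$ with $u(x)\ge c_0\,\delta(x)^s$ for $1-\rho\le|x|<1$. Together with $\inf_{|x|\le1-\rho}u>0$ (interior positivity and continuity) and $\delta(x)^s\le1$, this yields $\inf_{x\in B}u(x)/\delta(x)^s>0$. Here the hypothesis $\l>-\l_1(B)$ is needed only to guarantee the maximum principle for $(-\Delta)^s+\l$ on $B$.

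For \eqref{eq:qual-sol-eq-2}, the point is that $g:=u^p-\l u\in L^\infty(B)\cap C^\infty(B)$ and $u\in\cH^s(B)$ solves $(-\Delta)^s u=g$ in $B$ with $u\equiv0$ outside $B$, so the boundary regularity theory of \cite{RS16a} applies: the quotient $u/\delta^s$ extends to a function in $C^\alpha(\overline B)$ for some $\alpha>0$, and its boundary trace, being radial on the sphere $\partial B$, is a constant $\psi_0\ge0$; by \eqref{eq:qual-sol-eq-1} we have $\psi_0>0$. The accompanying weighted gradient estimate then yields the first-order boundary expansion
\[
\n u(x)=-s\,\psi_0\,(1-|x|)^{s-1}\,\frac{x}{|x|}+o\bigl((1-|x|)^{s-1}\bigr)\qquad\text{as }|x|\nearrow1,
\]
which depends only on $|x|$ by radiality; multiplying by $(1-|x|)^{1-s}x$ and letting $|x|\nearrow1$ gives $(1-|x|)^{1-s}\,\n u(x)\cdot x\to-s\,\psi_0<0$, i.e. \eqref{eq:qual-sol-eq-2}.

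The main obstacle is the fractional Hopf lemma under the weak hypotheses available here, which is the reason it is deferred to the appendix; its role is precisely to exclude decay of $u$ faster than $\delta^s$ at $\partial B$. The secondary technical point is to promote the $C^\alpha(\overline B)$-regularity of $u/\delta^s$ to the stated first-order asymptotics of $\n u$; this is contained in the weighted interior estimates of \cite{RS16a}, and for the radial solution at hand it can alternatively be extracted from the one-variable profile $r\mapsto u(r)$ (with $u(x)=u(|x|)$), reducing the claim to $(1-r)\,\partial_r\bigl(u(r)/(1-r)^s\bigr)\to0$ as $r\nearrow1$. Everything else is either quotable verbatim or follows from routine bootstrap arguments.
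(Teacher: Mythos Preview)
Your outline follows the paper's proof closely: boundedness via \cite{XJV}, regularity via a bootstrap through \cite{RS16a}, symmetry and monotonicity via \cite{JW}, and part~(ii) via \cite[Proposition~3.1]{FLS}. There is, however, one genuine misidentification. The Hopf-type lemma in the appendix (Lemma~\ref{hopf-type-lemma}, the variant of \cite[Prop.~2.2]{SV} you invoke) is a statement about \emph{antisymmetric} functions near a reflection hyperplane: under the hypothesis $v(-x_1,x')=-v(x_1,x')$ it gives $v>0$ in $\Omega_+$ and $\liminf_{t\to0^+}v(t,x')/t>0$ at points of $T\cap\Omega$. It says nothing about the behavior of a positive solution near $\partial B$ and cannot deliver a bound of the form $u\ge c_0\,\delta^s$. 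The paper instead quotes the fractional Hopf lemma of \cite[Proposition~3.3]{FJ-2015}, which applies to a nonnegative, nontrivial supersolution of $(-\Delta)^s+c(x)$ on a bounded $C^{1,1}$ domain with $c\in L^\infty$ and yields exactly $\inf_{B}u/\delta^s>0$; this is the reference you need for \eqref{eq:qual-sol-eq-1}.

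For \eqref{eq:qual-sol-eq-2} your idea is right, but the justification is incomplete: the assertion ``$(1-r)\,\partial_r\bigl(u(r)/(1-r)^s\bigr)\to0$'' does not follow from mere $C^\alpha(\overline B)$-regularity of $u/\delta^s$, since H\"older continuity gives no control on the derivative. One needs an actual gradient estimate up to the boundary. The paper simply quotes \cite{FJ-2019}, which proves precisely the relation
\[
\lim_{|x|\nearrow1}(1-|x|)^{1-s}\,\nabla u(x)\cdot x \;=\; -\,s\,\lim_{|x|\nearrow1}\frac{u(x)}{(1-|x|)^{s}},
\]
and then \eqref{eq:qual-sol-eq-2} follows immediately from \eqref{eq:qual-sol-eq-1}.
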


\begin{proof}
(ii) has been observed in \cite{FLS}.\\
(i) From  \cite[Proposition 3.1]{XJV} we  have that $u\in L^\infty(B)$. Hence by a classical bootstrap argument combined with interior and boundary  regularity, in \cite{RS16a}, we find that $u\in C^s(\R^N)\cap C^\infty(B)$.  Thanks to \cite[Corollary 1.2]{JW} we deduce that $u$ is radially symmetric   and  strictly decreasing in $B$. Now (\ref{eq:qual-sol-eq-1}) follows from the fractional Hopf lemma,  see e.g. \cite[Proposition 3.3]{FJ-2015}. Finally, by \cite{FJ-2019} we have
$$
\lim_{|x| \nearrow 1} (1-|x|)^{1-s}\n u(x)\cdot x=-s \lim_{|x| \nearrow 1}\frac{u(x)}{(1-|x|)^s},
$$
and (\ref{eq:qual-sol-eq-2}) follows.
\end{proof}

\begin{remark}
\label{rem-fractional-derivative}  
Let $f \in L^\infty(B)$, and let $w \in \cH^s(B)$ satisfy
$$
\Ds w = f  \qquad \text{on $B$}
$$
in weak sense, i.e.
$$
[w,v]_s = \int_{B}f v\,dx \qquad \text{for all $v \in \cH^s(B)$.}
$$
Then, by the fractional elliptic regularity theory  regularity theory in \cite{RS16a}, the function 
\begin{equation}
  \label{eq:def-fractional-normal-deriv}
\psi_w: B \to \R, \qquad \psi_w(x):= \frac{w(x)}{(1-|x|)^s}= \frac{w(x)}{\dist(x,\partial B)^s}
\end{equation}
extends to a (Hölder) continuous function on $\overline B$. The restriction of $\psi_w$ to $\partial B$ is called the {\em fractional (inner) normal derivative}.
It follows from Lemma~\ref{lem:qual-sol}(i) that $\psi_u >0$ on $\partial B$ if $u \in \cH^s(B)$ satisfies (\ref{eq:1.1}). Since $u$ is radial in this case, we may simply write $\psi_u(1)>0$, so we write $\psi_u$ as a function of the radial variable.
\end{remark}

\section{A Picone type identity}
\label{sec:nondegeneracy}

%


The following new Picone-type result will be one of our main tools in the following.
\begin{lemma}\label{lem:Picone}
Let $\O$ be an open set of $\R^N$ and $\a>0$.  Let  $V\in L^1_{loc}(\O)$ and    $v\in C^{2s+\a}(\O)\cap L^1(\R^N;(1+|x|)^{-N-2s})$ satisfy
\be\label{eq:identi-inveigen-A} 
\Ds v= V v \qquad\textrm{ in $\O$} .
\ee
Let $e$ be a unit vector of $\R^N$ and let $\s_e$ denote the   reflection with respect to $\{x\in \R^N\,:\,x\cdot e=0\}$.    
Suppose that $v>0$ on $\{x\in \R^N\,:\,x\cdot e>0\}$ and $v=-v\circ \s_e$ on $\R^N$.
Then for any $w\in  H^s(\R^N)\cap C_c(\O)$  satisfying $w=-w\circ \s_e$ on $\R^N$ and $\frac{w}{v}\in C_c(\O)$,  we have 
\be\label{eq:Poincare}
[w]_{s}^2 - \int_{\R^N} V w^2dx=\int_{\{x\cdot e>0\}} \int_{\{y\cdot e>0\}} H_{w,v}^e(x,y)dxdy,
\ee
where  $H_{w,v}^e\geq 0$ on  $\{x\cdot e>0\}\cap \{y\cdot e>0\}$ is given by 
\be \label{eq:Hewv}
H_{w,v}^e(x,y):=c_{N,s} {v(x)v(y)}  \left[ w(x)/v(x)- w(y)/v(y)\right]^2 
\times \left( \frac{1}{|x-y|^{N+2s}}- \frac{1}{ |\s_e( x) -y|^{N+2s}} \right) .
\ee
\end{lemma}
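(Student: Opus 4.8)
The plan is to mimic the classical Picone identity argument but exploit the antisymmetry of $v$ and $w$ to replace the singular kernel $|x-y|^{-N-2s}$ by the \emph{anisotropic} kernel that naturally appears when one restricts integration to the half-space $\{x\cdot e>0\}$. First I would compute, purely algebraically, the pointwise identity for the Gagliardo integrand. Writing $\phi := w/v$ (which by hypothesis is compactly supported in $\O$, hence bounded, and which inherits the antisymmetry $\phi = \phi\circ\s_e$ since both $w$ and $v$ flip sign), one has the elementary expansion
\[
(w(x)-w(y))^2 = (v(x)\phi(x) - v(y)\phi(y))^2 = (\phi(x)-\phi(y))^2 v(x)v(y) + \bigl(\phi(x)^2 v(x) + \phi(y)^2 v(y)\bigr)(v(x)-v(y)) \, \cdot(\dots),
\]
more precisely the standard Picone decomposition
\[
(w(x)-w(y))^2 - \bigl(\phi(x)^2 + \phi(y)^2\bigr)\tfrac12\,(v(x)-v(y))^2 \;-\;\text{(cross terms)} \;=\; v(x)v(y)(\phi(x)-\phi(y))^2,
\]
which after the usual regrouping gives $(w(x)-w(y))^2 = v(x)v(y)(\phi(x)-\phi(y))^2 + \phi(x)(v(x)-v(y))(w(x)\cdot\text{stuff})$; the cleanest form to use is
\[
(w(x)-w(y))^2 \;=\; v(x)v(y)\,(\phi(x)-\phi(y))^2 \;+\; (v(x)-v(y))\bigl(w(x)\phi(x) - w(y)\phi(y)\bigr).
\]
The second term, when integrated against $|x-y|^{-N-2s}$ over all of $\R^N\times\R^N$ and symmetrized, produces exactly $[v,w\phi]_s = [v, w^2/v]_s$, which by the weak form of \eqref{eq:identi-inveigen-A} (using $w^2/v \in C_c(\O)$ as a test function, legitimate because $v$ is bounded away from zero on the compact support of $w/v$ inside $\O$ and has the stated regularity and decay) equals $c_{N,s}^{-1}\cdot c_{N,s}\int_\R V v \cdot (w^2/v)\,dx = \int V w^2\,dx$. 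Hence $[w]_s^2 - \int V w^2\,dx = c_{N,s}\int_{\R^N\times\R^N} v(x)v(y)(\phi(x)-\phi(y))^2 |x-y|^{-N-2s}\,dx\,dy$.

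The next step is the \emph{folding} of this last integral onto the half-space, using $v=-v\circ\s_e$ and $\phi=\phi\circ\s_e$. Splitting $\R^N = \{x\cdot e>0\}\cup\{x\cdot e<0\}$ (the hyperplane has measure zero) into four blocks, and in each block containing a "$<0$" variable substituting $x\mapsto \s_e(x)$ and/or $y\mapsto\s_e(y)$, one uses $v(\s_e(x)) = -v(x)$, $\phi(\s_e(x))=\phi(x)$, $|\s_e(x)-\s_e(y)| = |x-y|$, and $|\s_e(x)-y| = |x-\s_e(y)|$. The $(+,+)$ and $(-,-)$ blocks each give $c_{N,s}\iint_{\{x\cdot e>0\}\times\{y\cdot e>0\}} v(x)v(y)(\phi(x)-\phi(y))^2 |x-y|^{-N-2s}$; the $(+,-)$ and $(-,+)$ blocks each give $-c_{N,s}\iint_{\{x\cdot e>0\}\times\{y\cdot e>0\}} v(x)v(y)(\phi(x)-\phi(y))^2 |\s_e(x)-y|^{-N-2s}$ (the minus sign from the single factor $v(\s_e(\cdot)) = -v(\cdot)$, and a sign on $\phi(x)-\phi(\s_e(y)) = \phi(x)-\phi(y)$ that squares away). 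Adding the four blocks yields precisely $2\iint_{\{x\cdot e>0\}\times\{y\cdot e>0\}} H^e_{w,v}(x,y)\,dx\,dy$ — wait, I must track the factor $2$ carefully: the decomposition has four blocks, two contributing $+$ and two contributing $-$, so I get $2\times(\text{the }H\text{-integral})$, and I should double-check whether $H^e_{w,v}$ as defined already absorbs a factor, adjusting the bookkeeping so that the final identity \eqref{eq:Poincare} comes out with the constant exactly as stated; this is the kind of routine-but-error-prone accounting I would do explicitly in the write-up. Finally, nonnegativity of $H^e_{w,v}$ on $\{x\cdot e>0\}\times\{y\cdot e>0\}$ is immediate: $v(x)v(y)>0$ there by hypothesis, the square is $\ge 0$, and $|x-y| \le |\s_e(x)-y|$ for $x,y$ in the same open half-space (the reflected point $\s_e(x)$ is farther from $y$ than $x$ is), so the kernel difference is $\ge 0$.

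\textbf{Main obstacle.} The genuinely delicate point is \emph{justifying that $w^2/v$ is an admissible test function} for the weak formulation \eqref{eq:identi-inveigen-A} and that all the integrals above converge absolutely so that Fubini and the four-block splitting are legitimate. One needs: $w^2/v = w\phi \in C_c(\O)$ (true since $\phi\in C_c(\O)$ and $w\in C_c(\O)$), that this function lies in $H^s(\R^N)$ (true, as a compactly supported $C^{2s+\a}$-ish function — note $v$ is $C^{2s+\a}$ and bounded below on $\supp\phi$, so $w\phi$ has the same local regularity as $w$ and $v$), and crucially that pairing it against $\Ds v$ via the bilinear form $[\cdot,\cdot]_s$ reproduces $\int (\Ds v)(w^2/v) = \int V v (w^2/v) = \int V w^2$ — here the tail integrability $v\in L^1(\R^N;(1+|x|)^{-N-2s})$ is exactly what guarantees $\Ds v$ is well-defined pointwise and the integration-by-parts identity $[v,\varphi]_s = \int (\Ds v)\varphi$ holds for $\varphi\in C_c(\O)$. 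I would also need to confirm absolute convergence of $\iint v(x)v(y)(\phi(x)-\phi(y))^2|x-y|^{-N-2s}$: near the diagonal, $(\phi(x)-\phi(y))^2 \lesssim |x-y|^2$ on the compact set, beating the singularity; away from the diagonal and at infinity, $\phi$ is compactly supported so one variable is confined to $\supp\phi$ and the $v$-factor on the other variable is controlled by the tail weight. Once these integrability and admissibility checks are in place, the algebra and the folding are purely mechanical.
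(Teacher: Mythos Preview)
Your proposal is correct and follows essentially the same route as the paper: the algebraic Picone identity $(w(x)-w(y))^2 = v(x)v(y)(\phi(x)-\phi(y))^2 + (v(x)-v(y))\bigl(w(x)\phi(x)-w(y)\phi(y)\bigr)$ is exactly the identity the paper uses (written there as $(w(x)-w(y))^2 - \tfrac{w^2(x)}{v(x)}(v(x)-v(y)) + \tfrac{w^2(y)}{v(y)}(v(x)-v(y)) = v(x)v(y)(\phi(x)-\phi(y))^2$), the cross term is handled via $\int \tfrac{w^2}{v}\,(-\Delta)^s v = \int Vw^2$ using $w^2/v\in C_c(\Omega)$, and the four-block folding onto $\{x\cdot e>0\}\times\{y\cdot e>0\}$ exploiting $v$ odd and $\phi$ even is identical. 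The factor-of-$2$ bookkeeping you flag does need care but resolves exactly as in the paper, and the nonnegativity argument via $|x-y|\le|\sigma_e(x)-y|$ is the same.
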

\begin{proof}
We assume without loss of generality that $e=e_1$. We first note that for every $x,y\in \R^N$ with $x_1\not=0$ and $y_1\not=0$,  we have 
\begin{align}\label{eq:identi-inveigen}
(w(x)-w(y))^2-&w^2(x)\frac{(v(x)-v(y) )}{v(x)}  + w^2(y)\frac{(v(x)-v(y) )}{v(y)} \nonumber\\
&=v(x)v(y)\left( w(x)/v(x)- w(y)/v(y)\right)^2.
\end{align}
Moreover, by assumption, $w/v\in C_c(\O)$, and thus 
$$
c_{N,s}\int_{\R^N\times \R^N} \frac{w^2(x)}{v(x)}\frac{v(x)-v(y) }{|x-y|^{N+2s}} dxdy= \int_{\O} \frac{w^2(x)}{v(x)}\,\Ds v(x) dx
=\int_{\O}V(x) w^2(x) dx<\infty
$$
by \eqref{eq:identi-inveigen-A}. From this  and a change of variable,  we  get 
\begin{align*}
&\int_{\R^N\times \R^N}  \frac{(w(x)-w(y))^2}{|x-y|^{N+2s}}   dy dx-\frac{2}{c_{N,s}} \int_{\O}V(x) w^2(x) dx \\
&= \int_{\R^N\times \R^N} \frac{v(x)v(y)}{ |x-y|^{N+2s}} \left( w(x)/v(x)- w(y)/v(y)\right)^2  dx  dy\\
&=2 \int_{\{x_1>0\}\cap \{y_1>0\}} \frac{v(x)v(y)}{|x-y|^{N+2s}} \left( w(x)/v(x)- w(y)/v(y)\right)^2   dx  dy\\
&-2 \int_{\{x_1>0\}\cap \{y_1>0\}} \frac{v(x)v(y)}{((x_1+y_1)^2+|\ti x -\ti y|^2)^{(N+2s)/2}} \left( w(x)/v(x)- w(y)/v(y)\right)^2  dx  dy.
\end{align*}
Here, we used that $x_1\mapsto v(x_1,\cdot)$ is odd and $x_1\mapsto \frac{ w}{v}(x_1,\cdot)$ is even.  It follows,  from this,  that   
\begin{align}
&\int_{\R^N\times \R^N}  \frac{(w(x)-w(y))^2}{|x-y|^{N+2s}}   dy dx -\frac{2}{c_{N,s}} \int_{\O}Vw^2 dx  =\frac{2}{c_{N,s}}  \int_{\{x_1>0\}\cap \{y_1>0\}}H^e_{w,v}(x,y)   dx  dy, \label{eq:nearfin-A}
\end{align}
where, for all $(x,y)\in \{x_1>0\}\times \{y_1>0\}$,
\begin{align*}
H^e_{w,v}(x,y):=c_{N,s} {v(x)v(y)} & \left[ w(x)/v(x)- w(y)/v(y)\right]^2\\
&\times \left( |x-y|^{-N-2s}- ((x_1+y_1)^2+|\ti x -\ti y|^2)^{-(N+2s)/2} \right)  .
\end{align*}

By the triangular inequality and the fact that $v(x)v(y)>0$ for all $x,y\in \{x_1>0\}\cap \{y_1>0\}$,  we deduce that $H^e_{w,v}\geq 0$ on  $\{x_1>0\}\cap \{y_1>0\}$. Hence \eqref{eq:Poincare} follows.
\end{proof}

\section{Nonradial nondegeneracy}
\label{sec:nonr-nond}

\subsection{Nonradial nondegeneracy in the ball case}
\label{sec:nonr-nond-ball}
This section is devoted to the proof of Part (i) of Theorem~\ref{th:nondeg}. By the remarks after 
Theorem~\ref{th:nondeg}, it suffices to prove the following.

\begin{theorem}
\label{r-n-case-nonradial-nond-section-ball-case}  
Let $w \in \cH^s_{nr}(B)$ be an eigenfunction of \eqref{eq:weighted-eigen} corresponding to an eigenvalue $\Lambda \in \R$.
Then $\Lambda>p$.
\end{theorem}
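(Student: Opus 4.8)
The plan is to reduce the statement to an application of the Picone identity (Lemma~\ref{lem:Picone}) combined with a reflection-symmetrization argument. Suppose for contradiction that there is an eigenfunction $w\in\cH^s_{nr}(B)\setminus\{0\}$ of \eqref{eq:weighted-eigen} with eigenvalue $\Lambda\le p$. First I would observe that, since $\Ds$ and the weight $u^{p-1}$ are invariant under the orthogonal group $O(N)$, the eigenspace associated to $\Lambda$ is $O(N)$-invariant and decomposes into irreducible pieces; since $w$ is nonradial, we may choose within this eigenspace a function that is \emph{antisymmetric} with respect to the reflection $\s_e$ at some hyperplane $\{x\cdot e=0\}$ through the origin. (Concretely: some spherical-harmonic component of positive degree is nontrivial; composing with a suitable rotation and averaging/differencing under $\s_e$ produces a nonzero eigenfunction $\varphi$ with $\varphi=-\varphi\circ\s_e$.) So from now on we may assume $w$ itself satisfies $w=-w\circ\s_e$, $w\not\equiv 0$, and $\Ds w+\lambda w=\Lambda u^{p-1}w$ in $B$.

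Next I would produce the positive antisymmetric ``model'' solution to use as the comparison function $v$ in Picone. The natural candidate is $v:=\partial_e u = \n u\cdot e$: by Lemma~\ref{lem:qual-sol}(i), $u$ is radial and strictly decreasing, so $\partial_e u(x)<0$ exactly when $x\cdot e>0$; thus $v:=-\n u\cdot e$ (or $\n u\cdot e$, up to a sign adjustment of the half-space) is positive on $\{x\cdot e>0\}$ and satisfies $v=-v\circ\s_e$. The point is that differentiating the equation $\Ds u+\lambda u=u^p$ gives $\Ds v+\lambda v = p\,u^{p-1}v$ pointwise in $B$ (this uses $u\in C^\infty(B)$ and the decay/regularity to justify differentiating under $\Ds$; formally $v$ solves the linearized equation with ``eigenvalue'' $p$). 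One must be careful that $v$ need not vanish on $\partial B$ — indeed \eqref{eq:qual-sol-eq-2} shows $v$ blows up like $(1-|x|)^{s-1}$ near $\partial B$ — so $v\notin\cH^s(B)$. The remedy is to apply Lemma~\ref{lem:Picone} on the open set $\O=B$ with this $v$, after checking the hypotheses: $v\in C^{2s+\alpha}_{loc}(B)$, $v\in L^1(\R^N;(1+|x|)^{-N-2s})$ since $v\equiv 0$ outside $B$ and $v\in L^1(B)$ by the boundary rate, and $V:=p\,u^{p-1}\in L^1_{loc}(B)$ (indeed bounded).

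With Lemma~\ref{lem:Picone} in hand, for any test function $\eta\in\cH^s_{nr}(B)\cap C_c(B)$ that is antisymmetric, $\eta=-\eta\circ\s_e$, with $\eta/v\in C_c(B)$, we get
\[
[\eta]_s^2 - p\int_{\R^N}u^{p-1}\eta^2\,dx \;=\; \int_{\{x\cdot e>0\}}\int_{\{y\cdot e>0\}} H^e_{\eta,v}(x,y)\,dx\,dy \;\ge\; 0,
\]
hence $[\eta]_s^2+\lambda\int_B\eta^2 \ge (p-\lambda\cdot 0)\dots$ — more precisely, adding $\lambda\int\eta^2$ and rewriting with the equation for $u$, one obtains $[\eta]_s^2+\lambda\int_B\eta^2\,dx\ge p\int_B u^{p-1}\eta^2\,dx$ for all such $\eta$, and by density (antisymmetric $C_c(B)$ functions with $\eta/v$ compactly supported are dense in $\cH^s_{nr,e}(B)$, the antisymmetric-nonradial subspace — this density step needs the boundary behavior of $v$ and a cutoff argument near $\partial B$ and near the hyperplane) this inequality extends to all $w\in\cH^s_{nr}(B)$ with $w=-w\circ\s_e$. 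Applying it to our eigenfunction $w$ gives
\[
\Lambda\int_B u^{p-1}w^2\,dx \;=\; [w]_s^2+\lambda\int_B w^2\,dx \;\ge\; p\int_B u^{p-1}w^2\,dx,
\]
and since $\int_B u^{p-1}w^2>0$ we conclude $\Lambda\ge p$. To upgrade to the strict inequality $\Lambda>p$: if $\Lambda=p$ then equality holds in Picone, which forces $H^e_{w,v}\equiv 0$, i.e. $w/v$ is a.e.\ constant on the half-space $\{x\cdot e>0\}$ (because the kernel factor $|x-y|^{-N-2s}-|\s_e(x)-y|^{-N-2s}$ is strictly positive there); antisymmetry then gives $w = c\,v$ on all of $\R^N$ for a constant $c\ne 0$. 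But $v=-\n u\cdot e$ behaves like $(1-|x|)^{s-1}$ at $\partial B$ whereas $w\in\cH^s(B)$ behaves like $(1-|x|)^{s}$ (Remark~\ref{rem-fractional-derivative}), which is incompatible with $c\ne 0$ — contradiction. Hence $\Lambda>p$.

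\textbf{Main obstacle.} The technical heart of the argument is the \emph{density/approximation step}: the comparison function $v=-\n u\cdot e$ is singular on $\partial B$ and vanishes on the hyperplane, so the admissible class in Lemma~\ref{lem:Picone} (requiring $\eta/v\in C_c(B)$) is delicate, and one must show that testing against this class still yields the inequality for every antisymmetric $w\in\cH^s_{nr}(B)$, in particular for an eigenfunction which a priori only has the $C^s(\R^N)$ boundary regularity. This requires a careful double cutoff — away from $\partial B$ (where $1/v\sim(1-|x|)^{1-s}$ is controlled) and away from $\{x\cdot e=0\}$ — together with the quantitative bounds \eqref{eq:qual-sol-eq-1}–\eqref{eq:qual-sol-eq-2} on $u$ and $\n u$, plus a uniform $H^s$-bound on the truncations. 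A secondary subtlety is justifying $\Ds(\n u\cdot e)=p u^{p-1}\n u\cdot e$ pointwise in $B$ with the integrability needed to invoke Picone, which follows from the smoothness of $u$ in $B$ and its decay, but should be stated carefully.
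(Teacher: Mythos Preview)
Your proposal is correct and follows essentially the same strategy as the paper: antisymmetrize $w$ via $\tilde w=\frac{1}{2}(w-w\circ\sigma_e)$ (which replaces your spherical-harmonic argument), compare with $v=-\partial_e u$ through the Picone identity of Lemma~\ref{lem:Picone}, cut off near $\partial B$ to pass to the limit, and rule out $\Lambda=p$ because $v$ blows up like $(1-|x|)^{s-1}$ at $\partial B$ while $\tilde w$ is bounded. The paper dispenses with your hyperplane cutoff by invoking the Hopf-type Lemma~\ref{hopf-type-lemma} to obtain $\partial_e v>0$ on $T_e\cap B$, so that $x\mapsto v(x)/(x\cdot e)$ extends to a positive $C^\infty$ function on $B$ and hence $(\zeta_k\tilde w)/v\in C_c(B)$ with only the single boundary cutoff $\zeta_k$; the limit then uses \cite[Lemma~2.2]{DFW} for $[\,\cdot\,]_s$ and Fatou for $H^e_{\tilde w,v}$.
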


\begin{proof}
  Assume by contradiction that $\Lambda \leq p$. For a unit vector $\nu$, we define the hyperplane $T_\nu:=\{x\cdot\nu=0\}$ and we denote by $\s_\nu $ the reflection with respect to $T_\nu $. Since $w$ is nonradial, there exists a unit vector $\nu$ with the property that 
$$
{\ti w}:=\frac{ w- w\circ\s_\nu}{2} \in \cH^s(B) \setminus \{0\}. 
$$
Moreover, since $u$ is radial,  ${\ti w} $ also solves $\Ds {\ti w}+{\ti w}=\Lambda u^{p-1} {\ti w}$ in $B$.  Without loss of generality, we may assume that $\nu = e_1$, so ${\ti w}$ is odd with respect to reflection of the $x_1$-variable. By Lemma \ref{lem:qual-sol}(i),  we have  $v:=-\de_{x_1}u \in C^\infty(B)\cap L^1(\R^N)$, and $v$ is a pointwise solution 
of the equation $\Ds v+\l v=pu^{p-1} v$ in $B$ which is odd with respect to the reflection at $\{x_1>0\}$ and satisfies
$$
v \ge 0\quad \text{in $\{x_1>0\}$}\qquad \text{and}\qquad v \not \equiv 0 \quad \text{in $\{x_1>0\}\cap B$.}
$$
Indeed, these inequalities follow since $v(x)=-{x_1}\frac{\de_r u(x)}{|x|}$ and $\de_r u \le 0$, $\de_r u \not \equiv 0$ in $B \setminus \{0\}$.

Now, by Lemma~\ref{hopf-type-lemma} from the appendix, we have $v>0$ in $\{x_1>0\}\cap B$ and $\partial_{x_1}v >0$ on $\{x_1=0\} \cap B$. Consequently, 
\begin{equation}
  \label{eq:quotient-x-1}
x \mapsto \frac{v(x)}{x_1} \qquad \text{extends to a positive $C^\infty$-function on $B$.}  
\end{equation}
Next we note that, by fractional elliptic regularity, ${\ti w} \in C^\infty(B)\cap L^\infty(B)$.
For $k \in \N$, we define the functions
\begin{equation}
  \label{eq:def-z-k-etc}
  \z_k(x)=1- \chi(k (1-|x|) ),
\end{equation}
  where $\chi$ is given by  \eqref{eq:def-chi}, 
and we put  $w_k:=\z_k {\ti w} \in C^1_c(B)$.  Therefore $w_k/v\in C_c(B)$ by \eqref{eq:quotient-x-1}. We can thus   
apply Lemma \ref{lem:Picone} to  obtain 
\begin{align}
[w_k]^2+ \l\int_B w_k^2dx -p\int_{B}u^{p-1} w_k^2 dx 
= \int_{\{x_1>0\}\cap \{y_1>0\}}H_{w_k,v}^{e_1}(x,y)   dx  dy. \label{eq:nearfin}
\end{align}
By \cite[Lemma 2.2]{DFW}, we have 
$$
\int_{\R^N\times \R^N}  \frac{(w_k(x)-w_k(y))^2}{|x-y|^{N+2s}}   dy dx \to \int_{\R^N\times \R^N}  \frac{({\ti w}(x)-{\ti w} (y))^2}{|x-y|^{N+2s}}   dy dx \quad\textrm{as $k\to \infty$. }
$$
In addition, by the dominated convergence theorem,
$$
\int_{B}u^{p-1} w_k^2 dx\to \int_{B}u^{p-1} {\ti w}^2 dx\quad \text{and}\quad \int_{B} w_k^2 dx\to \int_{B}  {\ti w}^2 dx \qquad \text{as $k\to \infty$.}
$$
We can thus  apply Fatou's lemma in \eqref{eq:nearfin}  to deduce that 
 \begin{align}
0=  [{\ti w}]_s^2+\l\int_B {\ti w}^2dx-\Lambda\int_B u^{p-1}{\ti w}^2dx\geq  
   &[{\ti w}]_s^2+\l\int_B {\ti w}^2dx-p\int_B u^{p-1}{\ti w}^2dx \nonumber\\
   &\geq  \int_{\{x_1>0\}\cap \{y_1>0\}}  H^{e_1}_{{\ti w},v}(x,y)dxdy, \label{piccone-consequence-ball} 
 \end{align}
where, recalling \eqref{eq:Hewv},
\begin{align*}
& H^{e_1}_{\ti w,v}(x,y):=c_{N,s} {v(x)v(y)}  \left[ \ti w(x)/v(x)- \ti w(y)/v(y)\right]^2\\
&\times \left( |x-y|^{-N-2s}- ((x_1+y_1)^2+|\ti x -\ti y|^2)^{-(N+2s)/2} \right) \geq 0 \quad\textrm{for all $x,y\in \{x_1>0\}\cap \{y_1>0\}$ .} 
\end{align*}
From this, (\ref{piccone-consequence-ball}) and the assumption $\Lambda \le p$, we deduce $\Lambda = p$ and that equality holds in (\ref{piccone-consequence-ball}). This implies that $  H^{e_1}_{w,v}\equiv 0$ on ${\{x_1>0\}\cap \{y_1>0\}}$ and thus   $w\not\equiv 0$  is  proportional to $v=\de_{x_1}u$ on $\{x_1>0\}$.   This  is impossible because  $\de_{x_1}u$ is unbounded as $|x|\to 1$ on  $\{x_1>0\}$ by Lemma \ref{lem:qual-sol}(i).  We thus conclude that   $\Lambda > p$. 
\end{proof}

\subsection{Nonradial nondegeneracy  in the case of $\R^N$}
\label{sec:nonr-nond-case}

This section is devoted to the proof of Part (ii) of Theorem~\ref{th:nondeg}. Hence we assume from now on that $u=u(|x|)$ is a fixed radial solution of (\ref{eq:1.1RN}). By the remarks after Theorem~\ref{th:nondeg}, it suffices to prove the following. 
\begin{theorem}
\label{r-n-case-nonradial-nond-section}  
Let $w \in H^s_{nr}(\R^N)$ be an eigenfunction of \eqref{eq:weighted-eigen-R^N} corresponding to an eigenvalue $\Lambda \in \R$. Then
one of the following alternatives hold.
\begin{itemize}
\item[(i)] $\Lambda>p$.
\item[(ii)] $\Lambda=p$, and $w \in \spann \{\de_{x_1}u,\dots,\de_{x_N}u\}$.   
\end{itemize}
\end{theorem}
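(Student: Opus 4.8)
The plan is to mimic the ball-case argument of Theorem~\ref{r-n-case-nonradial-nond-section-ball-case} using the Picone identity of Lemma~\ref{lem:Picone}, but now I must work harder to extract conclusion (ii) rather than a contradiction. Assume $\Lambda \le p$ and let $w \in H^s_{nr}(\R^N) \setminus \{0\}$ be an eigenfunction. Since $w$ is nonradial, there is a unit vector $\nu$ such that $\ti w := \tfrac12(w - w\circ\s_\nu)$ is a nonzero eigenfunction that is odd under $\s_\nu$; after a rotation assume $\nu = e_1$. As in the ball case, the candidate positive antisymmetric solution of the linearized equation on $\{x_1>0\}$ is $v := -\de_{x_1} u = -x_1 \de_r u / |x|$, which is smooth, positive on $\{x_1>0\}$, lies in $H^s(\R^N)$ (using Lemma~\ref{lem:qual-sol}(ii)), and solves $\Ds v + v = p u^{p-1} v$ pointwise. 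By Lemma~\ref{lem:qual-sol}(ii), $\de_r u < 0$ for $|x|$ large, and one needs $v>0$ strictly on all of $\{x_1>0\}$ together with $\de_{x_1} v > 0$ on $\{x_1 = 0\}$ so that $x \mapsto v(x)/x_1$ extends to a positive smooth function; here the Hopf-type Lemma~\ref{hopf-type-lemma} applied to $v$ on the half-space does the job, provided $\de_r u < 0$ on all of $\R^N \setminus \{0\}$, which follows from the strict radial monotonicity in Lemma~\ref{lem:qual-sol}(ii).

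Next I would run a cutoff/truncation argument to justify applying Lemma~\ref{lem:Picone}. Unlike the ball case there is no boundary, but $w_k := \z_k \ti w$ for suitable smooth cutoffs $\z_k$ (now cutting off near $|x| = \infty$, e.g. $\z_k(x) = \chi(|x|/k)$ for a fixed smooth bump $\chi$) produces functions in $H^s(\R^N) \cap C_c$ with $w_k/v \in C_c$, and one checks $[w_k]_s^2 \to [\ti w]_s^2$, $\int u^{p-1} w_k^2 \to \int u^{p-1}\ti w^2$, and $\int w_k^2 \to \int \ti w^2$ (the first by the Gagliardo-seminorm continuity lemma cited as \cite[Lemma 2.2]{DFW} suitably adapted, or a direct estimate; the latter two by dominated convergence). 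Applying Lemma~\ref{lem:Picone} to $w_k$ and passing to the limit with Fatou's lemma yields
\[
0 = [\ti w]_s^2 + \int_{\R^N}\ti w^2\,dx - \Lambda \int_{\R^N} u^{p-1}\ti w^2\,dx \ge \int_{\{x_1>0\}\cap\{y_1>0\}} H^{e_1}_{\ti w, v}(x,y)\,dx\,dy \ge 0,
\]
so $\Lambda = p$ and $H^{e_1}_{\ti w, v} \equiv 0$ on $\{x_1>0\}\times\{y_1>0\}$. Since $H^{e_1}_{\ti w,v}$ vanishes only when $\ti w(x)/v(x) = \ti w(y)/v(y)$ for all such $x,y$ (the kernel factor being strictly positive), we get $\ti w = c\, v = -c\, \de_{x_1} u$ on $\{x_1 > 0\}$ for a constant $c$, and by oddness $\ti w = -c\,\de_{x_1} u$ on all of $\R^N$. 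In particular $\ti w \in \spann\{\de_{x_1}u\}$.

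Finally I would upgrade "$\ti w$ proportional to $\de_{x_1} u$ for one reflection" to "$w \in \spann\{\de_{x_1}u,\dots,\de_{x_N}u\}$". The point is that $w$ itself need not be odd under any reflection, and a priori different hyperplanes give different antisymmetric parts. Writing $P_\nu w := \tfrac12(w - w\circ \s_\nu)$ for every unit vector $\nu$, the argument above shows that whenever $P_\nu w \ne 0$ it equals a multiple of $\n u \cdot \nu$. I expect this combinatorial-geometric step to be the main obstacle, and it is presumably exactly where the "new geometric lemma" mentioned in the introduction (the appendix geometric lemma) enters: one must show that a function $w \in H^s_{nr}(\R^N)$, all of whose antisymmetrizations $P_\nu w$ are multiples of $\n u \cdot \nu$, must itself be a linear combination of the $\de_{x_i} u$. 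Concretely, one can try: decompose $w$ in spherical harmonics times radial profiles; the relation $P_\nu w \in \spann\{\n u \cdot \nu\}$ for all $\nu$ forces all spherical-harmonic components of degree $\ge 2$ to vanish and the degree-$1$ component to have radial profile $\de_r u$, while the degree-$0$ (radial) component is killed by the definition of $H^s_{nr}$. Alternatively, and more in the spirit of the paper, use the geometric lemma to directly identify the nodal structure forced by antisymmetry in every direction. Either way, once $w = \sum_i c_i \de_{x_i} u$ is established, conclusion (ii) follows; and if no admissible $\nu$ gives $P_\nu w \neq 0$ then $w$ is radial, contradicting $w \in H^s_{nr}(\R^N)\setminus\{0\}$, so we are in case (i) or (ii) as claimed.
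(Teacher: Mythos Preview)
Your proposal is correct and tracks the paper's proof closely through the Picone step: for each unit vector $\nu$, antisymmetrize $w$ to $w^\nu$, compare against $v=-\nabla u\cdot\nu$ via Lemma~\ref{lem:Picone} with a radial cutoff at infinity, pass to the limit with Fatou, and conclude $\Lambda=p$ and $w^\nu=\kappa_\nu\,\nabla u\cdot\nu$ for \emph{every} $\nu\in S^{N-1}$. The paper does exactly this (it runs the Picone argument for arbitrary $\nu$ from the outset rather than first fixing one, but this is cosmetic).

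The only substantive difference is the final ``upgrading'' step. The paper first observes from the formula $\kappa_\nu=\|\nabla u\cdot\nu\|_{L^2}^{-2}\int_{\R^N}w\,\nabla u\cdot\nu\,dx$ that $\nu\mapsto\kappa_\nu$ is smooth, then invokes its appendix geometric lemma (Corollary~\ref{appendix-corollary-sphere}), which reduces to a two-dimensional angular ODE computation, to write $w$ as a radial function plus a single multiple of $\nabla u\cdot\nu_*$. Your spherical-harmonics alternative is valid and arguably cleaner: since $P_\nu$ preserves each degree-$\ell$ eigenspace and $\nabla u\cdot\nu$ is purely degree~$1$, the degree-$\ell$ part of $w$ is $\sigma_\nu$-invariant for all $\nu$ and hence $O(N)$-invariant, forcing it to vanish for $\ell\ge2$; evaluating at $\nu=e_i$ then pins the degree-$1$ part to $\sum_i\kappa_{e_i}\partial_{x_i}u$, and the radial remainder is killed by $w\in H^s_{nr}(\R^N)$. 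This route avoids checking regularity of $\kappa_\nu$ and sidesteps the appendix lemma altogether, at the modest cost of invoking the spherical-harmonics decomposition.
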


\begin{proof}
Suppose that $\Lambda \le p$.  By elliptic regularity we have $ w\in C^2(\R^N)$. We fix a unit vector $\nu$ and consider, as before, the reflection $\s_\nu $ at the hyperplane $T_\nu:=\{x\cdot\nu=0\}$. Moreover, we define
$$
{w^\nu}:=\frac{ w- w\circ\s_\nu}{2} \in H^s(\R^N)\cap C^2(\R^N).
$$
Clearly since $u$ is radial,  then  ${w^\nu} $  solves $\Ds {w^\nu}+{w^\nu}=\L_2 u^{p-1} {w^\nu}$ in $\R^N$.  
By Lemma~\ref{lem:qual-sol}(ii),  we have  $v:=-\n  u\cdot  \nu \in C^\infty(\R^N)\cap H^s(\R^N)$, and $v$ is a pointwise solution 
of the equation $\Ds v+ v=pu^{p-1} v$ in $\R^N$ which is odd with respect to the reflection at $\nu$ and $v  \gneqq0$ on  $\{x\cdot \nu>0\}$.  Applying  Lemma~\ref{hopf-type-lemma} from the appendix, we have $v>0$ in $\{x\cdot \nu >0\} $ and $\n v\cdot \nu >0$ on $\{x\cdot \nu=0\} $.  Therefore,
\begin{equation}
\label{eq:quotient-x-nu}
x \mapsto \frac{v(x)}{x\cdot \nu} \qquad \text{extends to a positive  $C^\infty$-function on $\R^N$.}  
\end{equation}
We let 
\be \label{eq:def-chi}
\chi\in C^\infty_c(-2,2),\quad\textrm{ with $0 \le \chi \le 1$ on $\R$ and $\chi\equiv 1$ on $(-1,1)$}.
\ee 
 For $R\in \N$,  we define $w_R^\nu(x)=w^\nu(x) \chi(|x|/R)$ for all $x\in \R^N$,  and we note that $w_R/v\in C_c(\R^N)$ thanks to \eqref{eq:quotient-x-nu}.  
Applying  Lemma \ref{lem:Picone},  we get 
\begin{align}
[w_R^\nu]_s^2+  \int_{\R^N} (w_R^\nu)^2dx - p \int_{\R^N}u^{p-1} (w_R^\nu)^2 dx  = \int_{\{x\cdot \nu >0\}\cap \{y \cdot \nu >0\}}H_{w_R^\nu ,v}^\nu(x,y)   dx  dy.\label{eq:nearfin-RN}
\end{align}
It is not difficult to see that $\|w_R^\nu\|_{H^s(\R^N)}\to \|w^\nu\|_{H^s(\R^N)}$ as $R\to \infty$.  On the other hand by the dominated convergence theorem,  as $R\to \infty $,
$$
\int_{\R^N} (w_R^\nu)^2dx\to  \int_{\R^N} (w^\nu)^2dx\qquad\textrm{and}\qquad   \int_{\R^N}u^{p-1} (w_R^\nu)^2 dx\to  \int_{\R^N}u^{p-1} (w^\nu)^2 dx.
$$
We thus apply  Fatou's lemma, recalling \eqref{eq:Hewv},   and  obtain that 
\be \label{eq:L2nearp}
 (\Lambda-p) \int_{\R^N} u^{p-1}(w^\nu)^2 dx\geq   \int_{\{x\cdot \nu>0\}\cap \{y\cdot \nu>0\}}H_{w^\nu,v}^\nu (x,y)   dx  dy\geq 0. 
\ee
Since $\Lambda \leq p$ by assumption, we conclude that $\Lambda= p$ and $H_{w^\nu,v}^\nu  \equiv 0$ on $\{x\cdot \nu>0\}\cap  \{y\cdot \nu>0\}$.  In view of \eqref{eq:Hewv}, this implies that there exists $\k_\nu \in \R$ such that   
$$
\frac{ w(x)- w\circ\s_\nu(x) }{2} = \k_\nu  \n u(x)\cdot \nu=\k_\nu \partial_r u(x) \frac{\nu\cdot x}{|x|}  \qquad\textrm{ for all $x\in \R^N$.}
$$
We recall that this holds for an arbitrary unit vector $\nu \in S^{N-1}$. Moreover, a posteriori we find that
$$
\k_\nu = \|\nabla u \cdot \nu \|_{L^2(\R^N)}^{-2} \int_{\R^N} w \nabla u \cdot \nu \,dx \qquad \text{for $\nu \in S^{N-1}$,}
$$
so the function $\nu \mapsto h(\nu):= \k_\nu$ is smooth on $S^{N-1}$. Moreover, the function $\partial_r u$ is radial and nonpositive, so we may write $\partial_r u(x)= -U(|x|)$ with a function $U:(0,\infty) \to [0,\infty)$. We may then apply Corollary \ref{appendix-corollary-sphere} in the appendix to obtain the representation
\begin{equation}
  \label{eq:prelim-representation}
w(x) = w_*(x) + h_* \partial_r u(x) \frac{\nu_*\cdot x}{|x|}= w_*(x) + h_* \nabla u(x) \cdot \nu_*  \qquad \text{for $x \in \R^N$}  
\end{equation}
with a fixed vector $\nu_* \in S^{N-1}$, a constant $h_* \in \R$ and a radial function $w_*: \R^N \to \R$. Since $w \in \H^s_{nr}(\R^N)$ it follows that $w_* \equiv 0$. Hence $w \in \spann \{\de_{x_1}u,\dots,\de_{x_N}u\}$, as desired.
\end{proof}
\begin{remark}\label{lem:rema-gen-non}
We note that the above argument to prove Theorem \ref{th:nondeg} is flexible enough to be applied to positive solutions $u\in H^s(\R^N)$ to the corresponding relativistic problem  $(-\D+m^2)^s u =u^p$ in $\R^N$ with $m>0$.
 This follows from the fact that Lemmas~\ref{lem:qual-sol} and \ref{lem:Picone} still hold if we replace $|x-y|^{-N-2s}$ with $K(|x-y|)$ for some strictly decreasing function $K$. Moreover, $K$ should be chosen such that $u\mapsto \int_{\R^N}\int_{\R^N}(u(x)-x(y))^2K(|x-y|)dxdy+\|u\|_{L^2(\R^N)}^2$ is comparable to $\|u\|_s^2$ on $ H^s(\R^N)$. We also recall from \cite{Fall-Felli-2} that,  for all $u\in H^s( \R^N)$, 
$$
\int_{\R^N}u(x)[(-\D+m^2)^s-m^{2s}]u(x) dx={ c_{N,s}'}\int_{\R^N}\! \int_{\R^N}\!{(u(x)-u(y)) ^2  }\cK_{\frac{N+2s}{2}}^m(m|x-y|)dx dy,
$$
 where $ c_{N,s}':= \frac{s 2^{\frac{2s+2-N}{2}}}{\pi^{\frac{N}{2}}\G(1-s)}$ and    $ \cK_\nu^m:(0,\infty)\to (0,\infty)$  is given by 
$$
 \cK_\nu^m( r)= m^{2\nu}  r^{-\nu }K_\nu(r),
$$
and  $K_\nu$ is the modified Bessel function of the second kind.  From the identity  $K_\nu'(r)=-\frac{\nu}{r}K_\nu -K_{\nu-1}$  and the fact that $K_{-\nu}=K_\nu$ for $\nu>0$, we see that  $ \cK_\nu^m$ is strictly decreasing on $(0,\infty)$.
\end{remark}

\section{Radial nondegeneracy in  dimension $N=1$}
\label{sec:radi-nond-spec}

In this section, we shall prove the following radial nondegeneracy result in the special case $N=1$.

\begin{theorem}
\label{radial-nondegeneracy-combined}  
Let $\lambda \ge 0$, and let $u \in \cH^s(B)$ be a  solution of 
\begin{equation}
    \label{eq:1.1-simplified-intervall}
\Ds u + \lambda u = u^{p}\quad\text{ in ${B}=(-1,1)$},\qquad u>0 \quad\textrm{in ${B}$}, \quad u = 0\quad\text{in $\R \setminus {B}$.}    
  \end{equation}
  Then we have:
  \begin{itemize}
  \item[(i)] $\Lambda_2 \not = p$ for the second eigenvalue $\Lambda_2$ of the weighted eigenvalue problem~\eqref{eq:weighted-eigen}.
  \item[(ii)] If $\lambda=0$, then $\Lambda = p$ is not an eigenvalue of the weighted eigenvalue problem~\eqref{eq:weighted-eigen}.
  \end{itemize}
\end{theorem}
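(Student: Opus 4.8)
Since $N=1$, ``radial'' means ``even'' and ``nonradial'' means ``odd''. If $\Lambda$ is an eigenvalue of \eqref{eq:weighted-eigen} admitting an odd eigenfunction, that eigenfunction lies in $\cH^s_{nr}(B)$, so Theorem~\ref{r-n-case-nonradial-nond-section-ball-case} (equivalently Theorem~\ref{th:nondeg}(i)) forces $\Lambda>p$. Hence in part~(ii) it suffices to rule out an even eigenfunction with eigenvalue $p$, and in part~(i), since $\Lambda_1=1$ is simple with an even (positive) eigenfunction while every odd eigenvalue already exceeds $p$, the equality $\Lambda_2=p$ can only occur when the second even eigenvalue equals $p$. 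In all cases I am reduced to deriving a contradiction from the existence of $w\in\cH^s_r(B)\setminus\{0\}$ solving $\Ds w+\lambda w=pu^{p-1}w$ in $B$; moreover $w$ changes sign, since $\Lambda_1=1<p$ and the first eigenfunction is positive. The strategy is then to manufacture out of $w$ a nonzero \emph{odd} function $\tilde w\in\cH^s_{nr}(B)$ whose Rayleigh quotient $\big([\tilde w]_s^2+\lambda\int_B\tilde w^2\big)\big/\int_B u^{p-1}\tilde w^2$ does not exceed $p$; this contradicts Theorem~\ref{th:nondeg}(i), namely $\L_{nr}(u,B)>p$.

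\textbf{Step 1: the fractional boundary derivative, case $\lambda=0$.} I claim $\psi_w(\pm1)=0$. Testing the equation for $w$ against $u$ and that for $u$ against $w$ and subtracting gives $(p-1)\int_B u^p w\,dx=0$, hence $\int_B u^p w\,dx=0$ and therefore $[w,u]_s=\int_B u^pw\,dx-\lambda\int_B uw\,dx=0$ when $\lambda=0$. Next I pair $w$ with the even function $z:=xu'$. By the scaling identity $\Ds(xu')=x(\Ds u)'+2s\,\Ds u$ together with $\Ds u=u^p-\lambda u$ in $B$, the function $z$ solves $\Ds z+\lambda z-pu^{p-1}z=2s\,\Ds u$ in $B$, and by Lemma~\ref{lem:qual-sol}(i) (where $\psi_u(1)>0$) it has the singular boundary behaviour $z(x)\sim -s\,\psi_u(1)\,\dist(x,\partial B)^{s-1}$ as $|x|\nearrow1$. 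Inserting $w$ and $z$ into the fractional integration by parts formula on $B$ and using that $w\in\cH^s(B)$ carries no $\dist(\cdot,\partial B)^{s-1}$ component at $\partial B$, all surviving boundary contributions are a nonzero multiple of $\psi_u(1)\psi_w(1)$, while the left-hand side equals $2s\,[w,u]_s$; thus $c\,\psi_u(1)\,\psi_w(1)=\lambda\int_B uw\,dx$ with $c\neq0$. For $\lambda=0$ this yields $\psi_w(1)=0$, and $\psi_w(-1)=0$ follows by symmetry.

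\textbf{Step 2: nodal analysis of the extension and the odd competitor.} Let $W$ be the Caffarelli--Silvestre extension of $w$ to $\R\times(0,\infty)$: it is even in $x$, solves $\div(t^{1-2s}\nabla W)=0$, vanishes on $\{(x,0):|x|\ge1\}$, and its weighted Dirichlet energy $\int t^{1-2s}|\nabla W|^2$ equals $[w]_s^2$ up to a positive constant and splits as a sum over the nodal domains of $W$. Since $w$ changes sign, $W$ has a nontrivial nodal set in the half-plane, and $\psi_w(\pm1)=0$ forces $W$ to vanish faster than $r^s$ at the two edge points $(\pm1,0)$. Using this, the evenness of $W$, and the topological curve-intersection lemma from the appendix, one determines the shape of the nodal domains of $W$ precisely enough to reflect/re-sign appropriate pieces of $W$ across its nodal curves and thereby produce a nonzero odd $\tilde w\in\cH^s_{nr}(B)$: the resulting configuration is an admissible competitor for the extension problem of $\tilde w$ whose energy and weighted $L^2$-quantities are controlled by those of $W$, so, since $[w]_s^2+\lambda\int_B w^2=p\int_B u^{p-1}w^2$, the Rayleigh quotient of $\tilde w$ is forced to be $\le p$ — the desired contradiction. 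For part~(i) with $\lambda>0$ one argues the same way, except that now $\psi_w(\pm1)$ need not vanish (indeed $\int_B uw\,dx\neq0$ for a second even eigenfunction, as a monotonicity argument with the decreasing weight $u^{p-1}$ and the constraint $\int_B u^p w\,dx=0$ shows); instead one exploits that $w$ corresponds to the \emph{second} even eigenvalue, which constrains the nodal pattern of $w$ and hence of $W$, and runs the same reflection construction.

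\textbf{Main obstacle.} The heart of the argument is the analysis of the nodal set of $W$ and the construction of the odd competitor. Unlike the Dirichlet energy in the local case, the Gagliardo seminorm $[\cdot]_s^2$ does not decompose over nodal domains of $w$ — only the extension energy over nodal domains of $W$ does — so the whole construction must be carried out in the extension, and the reflection/sign assignment must be simultaneously compatible with the even symmetry (to produce an odd trace) and with the zero exterior condition on $\{|x|\ge1\}$. It is precisely at this point that the vanishing boundary derivative $\psi_w(\pm1)=0$ (for $\lambda=0$), respectively the ``second eigenfunction'' structure (for $\lambda>0$), becomes indispensable, with the topological curve-intersection lemma supplying the rigorous control of how the nodal curves of $W$ can meet $\{t=0\}$ and the symmetry axis.
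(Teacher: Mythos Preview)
Your plan and Step~1 are correct and coincide with the paper's approach (the integration-by-parts identity pairing $w$ with $xu'$ is exactly Lemma~5.3, yielding $\psi_w(1)=-\frac{s\lambda}{\Gamma^2(1+s)\psi_u(1)}\int_B uw\,dx$ and hence $\psi_w(\pm1)=0$ when $\lambda=0$). The gap is in Step~2, where you conflate two distinct roles of the ``odd competitor'' idea and never actually close either case.

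The construction ``reflect/re-sign pieces of $W$ to obtain an odd $\tilde w$ with Rayleigh quotient $\le p$'' \emph{is} in the paper, but only as an intermediate lemma: if some nodal domain $\cO_+$ of $W$ were contained in $\{x>0\}$, then $V:=1_{\cO_+}W-1_{\sigma(\cO_+)}W$ would be odd with equality in the energy balance, contradicting Theorem~\ref{th:nondeg}(i). This shows every nodal domain of $W$ meets the axis $\{x=0\}$. Once that is known, however, no odd competitor with controlled Rayleigh quotient is available any longer, and the final contradiction is obtained differently in each case.

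For $\lambda=0$: from $\psi_w(1)=0$ one must prove, via a barrier comparison (Lemma~5.10), that $W$ actually \emph{changes sign} in every neighbourhood of $(1,0)$; the statement that $W$ vanishes faster than $r^s$ is not enough. Combining this with the fact that the nodal domain $\cO_+$ containing $(r,1)\times\{0\}$ must also contain $(-1,-r)\times\{0\}$, one then constructs two curves lying in distinct nodal domains whose endpoints on $\{t=0\}$ interlace; the topological lemma forces them to cross, which is impossible. Your sketch invokes the topological lemma only for ``shape determination'' and never specifies the two curves or why they must intersect.

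For $\lambda>0$ with $\Lambda_2\ge p$: the paper does \emph{not} run any reflection construction. It applies Courant's nodal domain theorem to the extension $W$ (valid because $\Lambda_2=p$ is then the second eigenvalue) to force exactly two nodal domains, hence $w$ changes sign exactly once in $(0,1)$. Then, say $w\le0$ near $\partial B$, so $\psi_w(1)\le0$, whence Lemma~5.3 gives $\int_B uw\,dx\ge0$; but $\int_B u^p w=0$ together with the single sign change and the strict monotonicity of $u^{p-1}$ yields $\int_B uw\,dx<0$, a contradiction. Your line ``runs the same reflection construction'' is incorrect here: the contradiction is a sign argument, and the Courant step (for $W$, not for $w$) is what pins down the nodal structure.

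So the missing pieces are: (a) the barrier lemma linking $\psi_w(1)=0$ to sign-change of $W$ at the corner, together with the explicit two-curve topological contradiction, for $\lambda=0$; and (b) the Courant count plus the integral sign contradiction, for $\lambda>0$.
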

In the special case $p=2$ and $N=1$, we have the following result in the case of problem \eqref{eq:1.1}.

\begin{theorem}
\label{radial-nondegeneracy-p2B}  
Let $\lambda >0$, $s\in (\frac{1}{6},1)$ and let $u \in \cH^s(B)$ be a   solution of 
\begin{equation}
    \label{eq:1.1-simplified-intervall-p2B}
\Ds u + \lambda u = u^{2}\quad\text{ in ${B}=(-1,1)$},\qquad u>0 \quad\textrm{in ${B}$}, \quad u = 0\quad\text{in $\R \setminus {B}$.}    
  \end{equation}
  Then $\Lambda = p$ is not an eigenvalue of the weighted eigenvalue problem~\eqref{eq:weighted-eigen}.
\end{theorem}

In the case of problem \eqref{eq:1.1RN}, we have 
\begin{theorem}
\label{radial-nondegeneracy-p2R}  
Let $\lambda > 0$,  $s\in (\frac{1}{6},1)$ and let $u \in H^s(\R)$ be an even  solution of 
\begin{equation}
    \label{eq:1.1-simplified-intervall-p2R}
\Ds u + \lambda u = u^{2}\quad\text{ in $\R$},\qquad u>0 \quad\textrm{in ${\R}$}.  
  \end{equation}
  Then the eigenspace of the eigenvalue $\Lambda = p$  for the weighted eigenvalue problem~\eqref{eq:weighted-eigen} is spanned by $u'$.
\end{theorem}
In order to prove the above theorems in a combined manner,   we will  consider $B_R:=(-R,R)$ for $R\in (0,+\infty]$ and  $u\in \cH^s(B_R)$  a positive even solution to  
\be \label{eq:1.1R-BR}
\Ds u+\l u=u^p \qquad\textrm{ in $B_R$}
\ee
 with $p\in( 1, 2^*_s-1)$ and $\l>-\l_1(B_R)$, where $\l_1(B_R)$ is the first Dirichlet eigenvalue of $(-\Delta)^s$ on $B_R$ if $R< \infty$ and $\l_1(\R)=0$. Since we are interested in radial nondegeneracy,  we will mainly focus on the study of even solutions $w\in \cH^s(B_R)$ to 
 the problem
  \begin{equation}
    \label{eq:lin-simple-lambda-0-1}
  (-\Delta)^s w + \lambda w = p u^{p-1}w \quad \text{in $B_R$.}     
\end{equation}
 
Our first observation is the following.

\begin{lemma}\label{lemma-radial-boundary-general-eigenfunction-M1}
Let $w \in \cH^s(B_R)$ be an  even  solution of (\ref{eq:lin-simple-lambda-0-1}). 
If $R=+\infty$, then 
\be\label{eq:integ-parts-psiuw-inty} 
\int_{\R} u w\,dx=0.
\ee
If $R<+\infty$, then 
\be\label{eq:integ-parts-psiuw} 
  \psi_w(R) = - \frac{s \lambda R }{\Gamma^2(1+s)\psi_u(R)}\int_{B_R} u w\,dx,
\ee
where $\psi_w$ and $\psi_u$ denote the fractional normal derivatives of $w$ and $u$ defined in Remark~\ref{rem-fractional-derivative}.
\end{lemma}

\begin{proof}
We start with the proof of \eqref{eq:integ-parts-psiuw}. 
  By (\ref{eq:1.1R-BR}) and the fractional integration by parts formula, see \cite[Theorem 1.9]{RX-Poh} or  \cite[Theorem 1.3]{Djitte-Fall-Weth}, we have 
\begin{align}
  &\int_{B_R}x u' (p u^{p-1} w-\lambda w)\,dx= \int_{B_R}x u'\Ds w\,dx \nonumber\\
  &=-\int_{B_R}x w' \Ds u\,dx-2R \G^2(1+s)\psi_u(R)\psi_w(R) -(1-2s)[u,w]_s.
\label{eq-integration-by-parts20-lambda}
\end{align}
Moreover, by (\ref{eq:1.1-simplified-intervall}) and \eqref{eq:weighted-eigen-simplified},
$$
[u,w]_s =\int_{B_R} (u^p-\lambda u) w\,dx  =  \int_{B_R} u(p u^{p-1}w-\lambda w)\,dx, 
$$
which implies that 
\begin{equation}
\label{orthogonality-u-p}
\int_{B_R} u^p w\,dx = 0 \qquad \text{and}\qquad  [u,w]_s =- \lambda \int_{B_R}  u w\,dx.
\end{equation}
In addition, by (\ref{eq:1.1-simplified-intervall}) and integration by parts, 
\begin{align*}
-\int_{B_R}x w' \Ds u\,dx&=  -\int_{B_R}x w' (u^p-\lambda u)\,dx=\int_{B_R} x u' (p u^{p-1}-\lambda)w\, dx+ \int_{B_R}w (u^p-\lambda u)\, dx\\
&=\int_{B_R} x u' (p u^{p-1}-\lambda)w\, dx-\lambda  \int_{B_R}u w \, dx.
\end{align*}
Combining these identities gives
$$
2R \G^2(1+s)\psi_u(R)\psi_w(R)= -2s \lambda \int_{B_R} u w\,dx.
$$
Now \eqref{eq:integ-parts-psiuw} follows since $\psi_u(R)>0$ by Remark~\ref{rem-fractional-derivative}.\\
Moreover, \eqref{eq:integ-parts-psiuw-inty} follows from the identity
\begin{align*}
  &\int_{\R}x u' (p u^{p-1} w-\lambda w)\,dx= \int_{\R}x u'\Ds w\,dx =-\int_{\R}x w' \Ds u\,dx  -(1-2s)[u,w]_s
\end{align*}
and the same integration by parts  as above.
\end{proof}
In the following, we need to consider the \textit{$s$-harmonic extension} $V$ of a function $v \in \cH^s(B_R)$, which has been introduced in \cite{CaSi} and is sometimes called the Caffarelli-Silvestre extension. 

For this we let $\R^{2}_+=\{(x,t)\in \R^2\,:\, x\in \R, t>0\}$, and we consider the function space
$D^{1,2}(\R^{2}_+;t^{1-2s})$ defined as the completion of $C^\infty(\ov{\R^{2}_+})$ with respect to the norm
$$
V \mapsto \Bigl(\int_{\R^{2}_+}|\n V|^2 t^{1-2s}\, dxdt\Bigr)^{\frac{1}{2}} .
$$
We also consider the space
\be 
D^{1,2}_{B_R}(\R^2_+;t^{1-2s})=\{V\in D^{1,2}(\R^2_+;t^{1-2s}) \,:\, V(\cdot,0)=0 \textrm{ on $\R \setminus B_R$}\}.
\ee
Then we recall that the \textit{$s$-harmonic extension} $V \in D^{1,2}_{B_R}(\R^2_+;t^{1-2s})$ of a function $v \in \cH^s(B_R)$ is given as the unique minimizer of the functional
$$
V \mapsto \int_{\R^{2}_+}|\n V|^2 t^{1-2s}\, dxdt  
$$
within the class of functions $V \in D^{1,2}_{B_R}(\R^2_+;t^{1-2s})$ with $V \big|_{\R \times \{0\}}= v$ in trace sense. Here we identify $\R$ with $\R \times \{0\}$. The function $V$ therefore satisfies
\begin{equation}
\label{eq:extens1-equation}
\div(t^{1-2s}\n V)= 0
\end{equation}
together with the boundary condition
\begin{equation}
\label{eq:extens1-bc}
\lim \limits_{t\to 0} V(x,t)=v (x) \quad\textrm{ for a.e. $x\in \R$.}
\end{equation}
Moreover by \cite{CaSi},  $V$ satisfies
\begin{align}
  \label{eq:weak-sense-extension-general}
  \int_{\R^2_+}t^{1-2s} \nabla V \cdot \nabla \Phi d(x,t) &= d_s [v,\varphi ]_s\\
  &\text{for all $\Phi \in D^{1,2}_{B_R}(\R^2_+;t^{1-2s})$ with $\varphi = \Phi\big|_{\R \times \{0\}} \in \cH^s(B_R),$}\nonumber    
\end{align}
where $[\cdot,\cdot]_s$ is defined in (\ref{eq:def-gagliardo-nirenberg-quadratic-form}) and the constant $d_s$ is given by $d_s = 2^{2s-1} \frac{\Gamma(s)}{\Gamma(1-s)}$, see e.g. \cite{CS}.

In addition, the function $V$ has the Poisson kernel representation 
\begin{equation}
  \label{eq:poisson-kernel}
 V(x,t)=p_{1,s} t^{2s}\int_{\R}\frac{v(y) dy}{(t^2+|x-y|^2)^{\frac{1+2s}{2}}}
\end{equation}
with
$$
p_{1,s} =\Bigl(\int_{\R}(|z|^2+1)^{-\frac{1+2s}{2}}\,dz\Bigr)^{-1} = \pi^{-1/s}s\frac{\Gamma(\frac{1}{2}+s)}{\Gamma(1+s)}.
$$
In the case where $v \in \cH^s(B_R) \cap C^\infty(B_R) \cap C^s(\R)$, we also have the regularity properties 
 \begin{equation}
   \label{eq:W-reg-prop}
 V \in C^s_{loc}(\overline{\R^2_+}) \cap C^\infty(\overline{\R^2_+} \setminus (\partial B_R \times \{0\})) .
 \end{equation}
Moreover, in this case we have the pointwise property 
\begin{equation}
  \label{eq:pointwise-normal-deriv-prop}
- \lim_{t\to  0}t^{1-2s}\de_t V (x,t)=d_s \Ds v(x) \qquad\textrm{ for all $x\in B_R$.}
\end{equation}
In particular, (\ref{eq:W-reg-prop}) and (\ref{eq:pointwise-normal-deriv-prop}) are true for the $s$-harmonic extension $W$ of the eigenfunction $w$. In the following, we need to analyze the nodal structure of $W$.

\begin{definition}
\label{def-nodal-domain-extension}  
We call a subset 
$\cO \subset \overline{\R^2_+}$ a nodal domain of $W$ if $\cO$ is a connected component of the set $\{(x,t) \in \overline{\R^2_+}\::\: W(x,t) \not = 0\}$.
\end{definition}
Since $W$ is continuous on $\overline{\R^2_+}$, every nodal domain $\cO$ is relatively open in $\overline{\R^2_+}$. We first note the following.

\begin{lemma}
  \label{lem-nod-dom-simple}
  Let $w \in \cH^s(B_R)$ be a fixed  nontrivial solution of (\ref{eq:lin-simple-lambda-0-1}). 
  Let $\cO \subset \ov{\R^2_+}$ be a nodal domain of the $s$-harmonic extension $W$ of $w$, and let $\tilde \cO:= \{x \in B_R \::\: (x,0) \in \cO\}$. Then we have 
\begin{equation}
  \label{eq:weak-sense-extension-nodal}
\int_{\cO}t^{1-2s} |\nabla W|^2d(x,t) = d_s \int_{\tilde \cO} (p  u^{p-1}-\lambda) w^2(x)\,dx .
\end{equation}
Moreover,  $\tilde \cO \not = \varnothing$.
\end{lemma}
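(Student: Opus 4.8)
The plan is to test the weak formulation \eqref{eq:weak-sense-extension-general} for $W$ against a cutoff of $W$ itself supported in the nodal domain $\cO$. More precisely, since $W$ is continuous on $\ov{\R^2_+}$ and $\cO$ is a connected component of $\{W \ne 0\}$, the function $W \mathbf 1_{\cO}$ coincides with $W^+$ or with $-W^-$ (the positive or negative part of $W$) on a neighborhood of $\ov\cO$ and vanishes outside. I would first check that $\Phi := W \mathbf 1_{\cO}$ (equivalently $\pm W^\pm$) lies in $D^{1,2}_B(\R^2_+;t^{1-2s})$: indeed $W^\pm \in D^{1,2}(\R^2_+;t^{1-2s})$ because $\nabla W^\pm = \mathbf 1_{\{\pm W>0\}}\nabla W$ a.e. and $|\nabla W^\pm|\le |\nabla W|$, and its trace on $\R\times\{0\}$ is $w^\pm$, which vanishes on $\R\setminus B$ since $w \in \cH^s(B)$; hence $\varphi := \Phi\big|_{\R\times\{0\}} = \pm w\mathbf 1_{\tilde\cO}\in \cH^s(B)$ as well (it is the restriction of $w^\pm$ to $B$, up to sign, and a truncation of an $H^s(\R^N)$ function supported in $B$ stays in $\cH^s(B)$). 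Because $\nabla \Phi = \mathbf 1_{\cO}\nabla W$ a.e., the left side of \eqref{eq:weak-sense-extension-general} becomes $\int_{\cO} t^{1-2s}|\nabla W|^2\,d(x,t)$, which is exactly the left-hand side of \eqref{eq:weak-sense-extension-nodal}.

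For the right-hand side, recall that $W$ is the $s$-harmonic extension of the eigenfunction $w$, which by \eqref{eq:lin-simple-lambda-0} satisfies $\Ds w = (p u^{p-1}-\lambda)w$ in $B$ in the weak sense, i.e.\ $[w,\eta]_s = \int_B (p u^{p-1}-\lambda)w\,\eta\,dx$ for all $\eta \in \cH^s(B)$. Applying this with $\eta = \varphi = \pm w\mathbf 1_{\tilde\cO}$ gives
\[
d_s [w,\varphi]_s = d_s \int_B (p u^{p-1}-\lambda)w\,\varphi\,dx = d_s \int_{\tilde\cO}(p u^{p-1}-\lambda)w^2\,dx,
\]
since on $\tilde\cO$ one has $\varphi = \pm w$ and $w\,\varphi = w^2$, while $\varphi = 0$ off $\tilde\cO$. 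Combining with the previous paragraph through \eqref{eq:weak-sense-extension-general} yields \eqref{eq:weak-sense-extension-nodal}.

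It remains to show $\tilde\cO \ne \varnothing$, i.e.\ that the closure of every nodal domain of $W$ touches the boundary segment $B\times\{0\}$. Suppose not: then $\cO$ would be a connected component of $\{W\ne 0\}$ with $\ov\cO \subset \R^2_+ \cup \big((\R\setminus\ov B)\times\{0\}\big)$, hence $\varphi \equiv 0$, so the identity just proved forces $\int_{\cO}t^{1-2s}|\nabla W|^2\,d(x,t)=0$ and thus $W\equiv$ const on $\cO$; since $W$ vanishes on $\partial\cO$ (continuity) this constant is $0$, contradicting $W\ne 0$ on $\cO$. Here I use that $W$ satisfies the degenerate-elliptic equation \eqref{eq:extens1-equation} with homogeneous condition $W(\cdot,0)=0$ on $\R\setminus B$ and is not identically zero (as $w\not\equiv 0$), together with the fact that $\nabla W \not\equiv 0$ on any open subset where $W$ is nonconstant. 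The one point deserving care — and the main obstacle — is the justification that $\Phi = W\mathbf 1_{\cO}$ is an admissible test function with the correct trace: one must rule out pathological oscillation of $W$ near $\partial B\times\{0\}$, but this is exactly what the regularity \eqref{eq:W-reg-prop} guarantees, so that near each point of $\ov\cO\setminus(\partial B\times\{0\})$ the indicator $\mathbf 1_{\cO}$ agrees with $\mathbf 1_{\{\pm W>0\}}$ and $\Phi$ is genuinely $\pm W^\pm$ there.
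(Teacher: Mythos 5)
Your proposal is correct and follows essentially the same route as the paper: test the extension identity \eqref{eq:weak-sense-extension-general} with $\Phi = 1_{\cO}W$, whose trace is $\varphi = 1_{\tilde\cO}w$, use the weak form of the eigenvalue equation for $w$ to evaluate $d_s[w,\varphi]_s$, and derive $\tilde\cO\neq\varnothing$ by noting that $\tilde\cO=\varnothing$ would force $\nabla W\equiv 0$, hence $W\equiv 0$, on $\cO$. Your extra care about why $1_{\cO}W$ is admissible (via $W^{\pm}$ and the regularity of $W$) is a reasonable supplement to a point the paper simply asserts, though strictly speaking $1_{\cO}W$ agrees with $\pm W^{\pm}$ only on $\cO$ itself rather than on a full neighborhood of $\ov\cO$ when $W$ has several nodal domains of the same sign.
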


\begin{proof}
Applying (\ref{eq:weak-sense-extension-general}) to $v= w$, $V=W$ and the function $\Phi = 1_{\cO}W \in D^{1,2}_{B_R}(\R^2_+;t^{1-2s})$ gives 
\begin{align*}
  &\int_{\cO}t^{1-2s} |\nabla W|^2d(x,t) = \int_{\R^2_+}t^{1-2s} \nabla W \cdot \nabla \Phi d(x,t) = d_s [v,\varphi ]_s\\
  &=d_s  \int_{B_R}(p  u^{p-1}-\lambda)w \varphi\,dx = d_s \int_{\tilde \cO}(p  u^{p-1}-\lambda) w^2(x)\,dx    
\end{align*}
with 
$$
\varphi= \Phi\big|_{\R \times \{0\}} = 1_{\tilde \cO}w \in \cH^s(B_R). 
$$
This yields \eqref{eq:weak-sense-extension-nodal}. Moreover, assuming by contradiction that $\tilde \cO=\varnothing$ yields
$$
\int_{\cO}t^{1-2s} |\nabla W|^2d(x,t)=0,
$$
which in turn implies that $W$ is constant in $\cO$. Hence, by continuity, $W\equiv 0$ in $\cO$ which is not possible. Hence $\tilde \cO \not =\varnothing$, as claimed.
\end{proof}

We also need the following important consequence of Theorem~\ref{th:nondeg}(i).

\begin{lemma}
  \label{consequence-odd-nondegeneracy}
  Let $V \in D^{1,2}_{B_R}(\R^2_+;t^{1-2s})$ be a  function which is odd with respect to the reflection at the hyperplane $T_0:= \{(0,t)\::\: t \in \R\}$ such that 
\be \label{eq:Equility-in-odd-extension}
  \int_{\R^2_+}t^{1-2s} |\nabla V|^2d(x,t) = d_s \int_{B_R} (p u^{p-1}-\l)v^2\,dx  .
\ee
 If $R<+\infty$ then $V\equiv 0$ and if $R=+\infty$ then $V(\cdot, 0)$  is proportional to $u'$.
\end{lemma}

\begin{proof}
Let $\ti V$ denote the $s$-harmonic extension of $v:=V(\cdot, 0)$. The variational characterization of the $s$-harmonic extension and \eqref{eq:Equility-in-odd-extension},  then give
\begin{equation}
  \label{consequence-odd-nondegeneracy-2}
 d_s \int_{B_R} (p u^{p-1}-\l)v^2\,dx =  \int_{\R^2_+} t^{1-2s} |\nabla V|^2d(x,t) \ge \int_{\R^2_+}t^{1-2s} |\nabla \ti V|^2d(x,t) = d_s [v]_s^2.
\end{equation}
If $v\not\equiv 0$, since it is odd, then  Theorem~\ref{th:nondeg}(i) implies
\begin{equation}
  \label{consequence-odd-nondegeneracy-3}
  [v|_s^2 > \int_{B_R} (p u^{p-1}-\l) v^2\,dx
\end{equation}
and thus if $R<+\infty$ we get a contradiction with  \eqref{consequence-odd-nondegeneracy-2}.  Hence $v\equiv 0$ and thus $V\equiv 0$. \\
 If now $R=+\infty$ then Theorem~\ref{th:nondeg}(ii) yields the inequality $$
[v|_s^2 \geq  \int_{B_R} (p u^{p-1}-\l) v^2\,dx
$$
and thus  equality holds in \eqref{consequence-odd-nondegeneracy-2} so that $v$ is proportional to $u'$.
\end{proof}

We now continue our analysis of the nodal structure of the $s$-harmonic extension $W$ of the eigenfunction $w$.
\begin{lemma}  
  \label{criterion-nodal-domain}
  Let $w \in \cH^s(B_R)$ be an even  nontrivial solution of (\ref{eq:lin-simple-lambda-0-1}) and let  $W$ be its $s$-harmonic extension.
  \begin{itemize}
  \item[(i)] No nodal domain of $W$ is contained in the set $H_{+}:= \{(t,x) \in \R^2\::\: x > 0, t \ge 0\}$.   
  \item[(ii)] If $x \in (0,R)$ satisfies $w(x) \not = 0$, then the points $(\pm x,0)$ belong to the same nodal domain $\cO$ of $W$.
  \item[(iii)] For every nodal domain $\cO$ of $W$ there exist a point $x \in (0,R)$ with $(\pm x,0) \in \cO$.
  \end{itemize}
\end{lemma}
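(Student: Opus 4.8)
The plan is to exploit the oddness produced by restricting $W$ to a half-plane and reflecting, together with the sign information $u^{p-1}>0$ and $\lambda \ge 0$, and then feed the resulting odd extension into Lemma~\ref{consequence-odd-nondegeneracy}. For part (i), suppose for contradiction that some nodal domain $\cO$ is contained in $H_{+}$. Since $w$ is even, so is $W$ (by the Poisson kernel representation \eqref{eq:poisson-kernel}), hence $\sigma_0(\cO)$ is also a nodal domain, where $\sigma_0$ denotes reflection at $T_0$, and $\sigma_0(\cO) \ne \cO$ because $\cO \subset H_+$ misses $T_0$. Define $V := 1_{\cO} W - 1_{\sigma_0(\cO)} W$. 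This function lies in $D^{1,2}_B(\R^2_+;t^{1-2s})$ (it vanishes on $\R\setminus B$ at $t=0$ since $\cO \cap (\R\times\{0\}) \subset B$, using Lemma~\ref{lem-nod-dom-simple}), it is odd with respect to $\sigma_0$ by construction, and $v := V(\cdot,0) = 1_{\tilde\cO}w - 1_{\sigma_0(\tilde\cO)}w$. Applying Lemma~\ref{consequence-odd-nondegeneracy} to this $V$ and using Lemma~\ref{lem-nod-dom-simple} for both $\cO$ and $\sigma_0(\cO)$ (whose integral of $t^{1-2s}|\nabla W|^2$ equals that over $\cO$ by symmetry), I compute
$$
2\int_{\cO}t^{1-2s}|\nabla W|^2\,d(x,t) = \int_{\R^2_+}t^{1-2s}|\nabla V|^2\,d(x,t) > d_s\int_B (pu^{p-1}-\lambda)v^2\,dx = 2d_s\int_{\tilde\cO}(pu^{p-1}-\lambda)w^2\,dx,
$$
which directly contradicts \eqref{eq:weak-sense-extension-nodal}. (Note $|\nabla V|^2 = |\nabla W|^2$ on $\cO \cup \sigma_0(\cO)$ and $V \equiv 0$ elsewhere, since the two reflected pieces are separated by $T_0$ where $V=0$; there is no cross term.) Thus no nodal domain sits inside $H_+$, and by the even symmetry the same holds for $H_- := \sigma_0(H_+)$.

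For part (ii), fix $x \in (0,1)$ with $w(x) \ne 0$, and let $\cO$ be the nodal domain of $W$ containing $(x,0)$. Since $W$ is even, $\sigma_0(\cO)$ is the nodal domain containing $(-x,0)$. If $\cO \ne \sigma_0(\cO)$, then $\cO$ is disjoint from $\sigma_0(\cO)$; but $\cO$ is connected and contains $(x,0)$ with $x>0$, so if $\cO$ were disjoint from $T_0$ it would be contained entirely in $H_+$ (a connected set not meeting the line $T_0 = \{x=0\}$ lies in one of the two open half-planes, and here it meets $\{x>0\}$), contradicting part (i). Hence $\cO$ meets $T_0$. But then, by even symmetry of $W$, $\cO = \sigma_0(\cO)$ (a nodal domain meeting its own mirror image is invariant under $\sigma_0$), so $(x,0)$ and $(-x,0)$ lie in the same nodal domain.

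For part (iii), let $\cO$ be any nodal domain of $W$. By Lemma~\ref{lem-nod-dom-simple}, $\tilde\cO = \{x \in B : (x,0) \in \cO\} \ne \varnothing$, so there is some $x_0 \in B$ with $(x_0,0) \in \cO$ and $w(x_0) \ne 0$. If $x_0 = 0$, then by continuity $w \ne 0$ on a neighborhood of $0$ in $B$, so there exists $x \in (0,1)$ near $0$ with $(x,0)$ in the same nodal domain $\cO$, and we are done. If $x_0 \ne 0$, replace $x_0$ by $|x_0| \in (0,1)$: by part (ii), $(|x_0|,0)$ and $(-|x_0|,0)$ lie in the same nodal domain, and by evenness of $W$ one of these equals $x_0$ up to sign, so $\cO$ contains $(|x_0|,0)$ with $|x_0| \in (0,1)$. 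This gives the desired point.

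The main obstacle I anticipate is the careful justification in part (i) that the competitor $V = 1_{\cO}W - 1_{\sigma_0(\cO)}W$ genuinely belongs to $D^{1,2}_B(\R^2_+;t^{1-2s})$ and that there are no hidden boundary contributions when splitting the Dirichlet energy — this is the same type of truncation argument as in Lemma~\ref{lem-nod-dom-simple}, relying on $W$ being continuous on $\overline{\R^2_+}$ so that $1_{\cO}W$ is a genuine $H^1$-type truncation vanishing on $\partial\cO$, and on $\cO$, $\sigma_0(\cO)$ being separated (so their closures meet only where $W=0$). The other delicate point is the topological claim that a connected nodal domain meeting $\{x>0\}$ but not the line $\{x=0\}$ is contained in $H_+$; this follows since $\cO \subset \overline{\R^2_+}$ is connected and $\overline{\R^2_+}\setminus T_0$ has exactly the two components $H_+\setminus T_0$ and $H_-\setminus T_0$ (here $T_0$ should be read as $\{(0,t):t\ge 0\}$, the relevant portion inside $\overline{\R^2_+}$).
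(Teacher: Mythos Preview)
Your proposal is correct and follows essentially the same approach as the paper: part (i) via the odd competitor $V = 1_{\cO}W - 1_{\sigma_0(\cO)}W$ fed into Lemma~\ref{consequence-odd-nondegeneracy} and compared against Lemma~\ref{lem-nod-dom-simple}; part (ii) by using (i) and connectedness to force $\cO$ to meet $T_0$, then invoking the evenness of $W$; and part (iii) via Lemma~\ref{lem-nod-dom-simple} together with (ii). The points you flag as delicate (membership of $V$ in $D^{1,2}_B(\R^2_+;t^{1-2s})$, and the topological step that a connected set in $\overline{\R^2_+}$ missing $T_0$ lies in one half) are handled in the paper exactly as you indicate.
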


\begin{proof}
(i)  Suppose by contradiction that $W$ has a nodal domain $\cO_+ \subset H_{+}$. Since $w$ is an even function, the function $W$ is even with respect to reflection $\sigma$ at the line $T_0:= \{(0,t)\::\: t \in \R\}$. Consequently, $\cO_-:=  \sigma(\cO_+)$ is also a nodal domain of $W$ with $\cO_- \not = \cO_+$. Hence we may define the function $V:= 1_{\cO_+} W- 1_{\cO_-} W \in  D^{1,2}_{B_R}(\R^2_+;t^{1-2s})$ which is odd with respect to reflection $\sigma$ at the hyperplane the hyperplane $T_0$,
  and we set $v:= V(\cdot,0) \in \cH^s(B_R)$. With $\tilde \cO_{\pm}:= \{x \in B_R\::\: (x,0) \in\ov{ \cO_\pm}\}$, it then follows from Lemma~\ref{lem-nod-dom-simple} that $\tilde \cO_{\pm}\not=\emptyset$ and 
  \begin{align*}
& d_s  \int_{B_R} (p u^{p-1}-\l)v^2\,dx =   d_s  \int_{\tilde \cO_+} (p u^{p-1}-\l) w^2\,dx + d_s  \int_{\tilde \cO_-}(p u^{p-1}-\l) w^2\,dx\\
    &= \int_{\cO_+}t^{1-2s} |\nabla W|^2d(x,t) + \int_{\cO_-}t^{1-2s} |\nabla W|^2d(x,t)= \int_{\R^2_+}t^{1-2s} |\nabla V|^2d(x,t).
  \end{align*}
  Now by Lemma~\ref{consequence-odd-nondegeneracy}, $v\equiv 0$ if $R<+\infty$ and $v$ is proportional to $u'$ if $R=+\infty$. In both cases, we get a contradiction since $u'<0$ on $(0,+\infty)$.
  The claim thus follows.\\
  (ii) By continuity, there exists a unique nodal domain $\cO$ of $W$ in $\R^2_+$ with the property that $(x,0) \in \cO$.  By (i), we know that  $\cO \not \subset H_{+}$. Therefore $\cO$ must contain points in the half line $\{0\} \times \R_+$. Thus, by the evenness of the function $W$ with respect to the reflection $\sigma$, it follows that also $(-x,0) \in \cO$, as claimed.\\
  (iii) We already know from Lemma~\ref{lem-nod-dom-simple} that $\cO$ contains a point $x \in B_R$. If $x \in (0,R)$, the claim follows directly from (ii). If $x \in (-R,0)$, we consider the reflection $\cO'= \sigma(\cO)$ of $\cO$, so that $-x \in (0,R)$ and $(-x,0) \in \cO'$. Then the claim holds for $\cO'$ in place of $\cO$ by (ii). It thus follows that $(\pm x,0) \in \cO' \cap \cO$, and therefore $\cO$ and $\cO'$ coincide. Finally, if $x = 0$, then also $w(x+ \eps) \not = 0$ and $x+ \eps \in \cO$ for $\eps>0$ sufficiently small, so we may replace $x$ by $x+ \eps$ and obtain the claim.  
\end{proof}

\begin{lemma}
  \label{nodal-domains-extension-corollary}
   Let $w \in \cH^s(B_R)$ be an even   nontrivial solution of (\ref{eq:lin-simple-lambda-0-1}). 
If $0<x<y<R$ are points with $w(x) \not= 0$, $w(y) \not = 0$, and if the function $w$ has a zero in $(x,y)$, then $(x,0)$ and $(y,0)$ are contained in different nodal domains $\cO_x, \cO_y$ of the $s$-harmonic extension $W$ of $w$.
\end{lemma}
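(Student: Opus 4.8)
The plan is to argue by contradiction: assume that $(x,0)$ and $(y,0)$ belong to a common nodal domain $\cO$ of $W$. Since $W$ is continuous and nowhere zero on the connected set $\cO$, it has a fixed sign there, and after replacing $w$ by $-w$ (and $W$ by $-W$) we may assume $W>0$ on $\cO$; in particular $w(x)=W(x,0)>0$ and $w(y)=W(y,0)>0$. Because $\cO$ is relatively open and connected in $\overline{\R^2_+}$ it is path-connected, so we may fix a simple arc $\alpha\subset\cO$ joining $(x,0)$ to $(y,0)$; at every point of $\alpha$ we have $W>0$, hence $w>0$ at every point of $\alpha\cap(\R\times\{0\})$. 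We then distinguish two cases according to the sign of $w$ on $(x,y)$.

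\textbf{Case 1: $w(z^\ast)<0$ for some $z^\ast\in(x,y)$.} Then $z^\ast\in(0,1)$ and $w(z^\ast)\neq0$, so $(z^\ast,0)$ lies in a nodal domain $\cN$ of $W$ with $W<0$ on $\cN$, whence $\cN\cap\cO=\varnothing$. By Lemma~\ref{criterion-nodal-domain}(ii) the point $(-z^\ast,0)$ lies in the same nodal domain as $(z^\ast,0)$, hence in $\cN$, and since $\cN$ is path-connected there is an arc $\beta\subset\cN$ joining $(z^\ast,0)$ to $(-z^\ast,0)$. Now $\alpha,\beta\subset\overline{\R^2_+}$ have all their endpoints on $\R\times\{0\}$, and these endpoints are linked on the line: from $-z^\ast<x<z^\ast<y$ we see that $x$ lies strictly between the endpoints of $\beta$ while $y$ does not. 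By the topological curve-intersection lemma of the appendix, $\alpha$ and $\beta$ must intersect, contradicting $\cO\cap\cN=\varnothing$.

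\textbf{Case 2: $w\ge0$ on $(x,y)$.} Let $z\in(x,y)$ be a zero of $w$. By a standard modification we may replace $\alpha$ by a sub-arc whose two endpoints lie in $(0,1)$ on opposite sides of $z$ and which meets $\R\times\{0\}$ exactly at these endpoints; letting $I\subset(0,1)$ denote the open interval between their first coordinates (so $z\in I$), the curve $\Gamma:=\alpha\cup(\overline I\times\{0\})$ is a Jordan curve bounding a Jordan domain $\Omega$ with $\overline\Omega\subset\overline{\R^2_+}$ and $\overline\Omega\cap(\R\times\{0\})=\overline I\times\{0\}\subset B$. By \eqref{eq:W-reg-prop}, $W$ is continuous on $\overline\Omega$; on $\partial\Omega$ we have $W>0$ on $\alpha$ and $W=w\ge0$ on $\overline I\times\{0\}$, so the weak maximum principle for $\mathrm{div}(t^{1-2s}\nabla\,\cdot\,)$ gives $W\ge0$ on $\overline\Omega$, and then the strong maximum principle (the operator is locally uniformly elliptic in $\Omega\subset\R^2_+$ and $W\not\equiv0$) gives $W>0$ in $\Omega$. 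Since $(z,0)$ is an interior point of the flat boundary portion $\overline I\times\{0\}$, the domain $\Omega$ satisfies an interior half-ball condition there, and the Hopf-type boundary lemma for $\mathrm{div}(t^{1-2s}\nabla\,\cdot\,)$ — the variant of \cite[Prop.~2.2]{SV} proved in the appendix — yields $-\lim_{t\to0^+}t^{1-2s}\partial_t W(z,t)<0$. On the other hand, by \eqref{eq:pointwise-normal-deriv-prop} and the equation $\Ds w+\lambda w=pu^{p-1}w$ in $B$, this limit equals $d_s\,(pu^{p-1}(z)-\lambda)\,w(z)$, which is $0$ since $w(z)=0$. This contradiction finishes Case 2, and with it the proof.

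The step I expect to be the main obstacle is Case 2, and within it the Hopf-type boundary lemma for the degenerate-elliptic operator $\mathrm{div}(t^{1-2s}\nabla\,\cdot\,)$ at a flat-boundary point where $W$ and its weighted conormal derivative both vanish and only the regularity \eqref{eq:W-reg-prop} is available — precisely what the appendix variant of \cite[Prop.~2.2]{SV} is designed to handle. A secondary technical point, used in both cases, is the reduction to arcs meeting $\R\times\{0\}$ only where $w>0$ (and only at their endpoints in Case 2), together with the precise ``linked endpoints force an intersection'' statement supplied by the topological curve-intersection lemma of the appendix.
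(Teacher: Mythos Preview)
Your overall strategy (contradiction, use of the topological curve-intersection lemma, and a Hopf-type argument at a boundary zero of $w$) matches the paper's, and your Case~1 is correct --- in fact slightly slicker than the paper's version of that step, since you get the axis point in the negative nodal domain for free via Lemma~\ref{criterion-nodal-domain}(ii).

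There is, however, a genuine gap in Case~2. Your ``standard modification'' produces a sub-arc with endpoints $(a,0)$ and $(b,0)$ on opposite sides of $z$ and touching the axis only at these endpoints, but nothing forces $[a,b]\subset[x,y]$: the original arc $\alpha\subset\cO$ may touch the axis at points with first coordinate outside $(x,y)$ (even at negative points, since $\cO$ is symmetric under $x_1\mapsto -x_1$), and these may survive as endpoints of the sub-arc. Consequently the assertion ``$W=w\ge 0$ on $\overline I\times\{0\}$'' is unjustified, because your hypothesis only gives $w\ge 0$ on $(x,y)$, not on the possibly larger interval $\overline I$. Without this, the weak maximum principle on your Jordan domain $\Omega$ fails and the Hopf argument cannot be launched. (A minor related point: the Hopf lemma you need here is for the degenerate operator $\div(t^{1-2s}\nabla\cdot)$ at a flat boundary point; the appendix lemma you cite is a variant of \cite[Prop.~2.2]{SV} for $(-\Delta)^s$ with antisymmetric data, not for the extension. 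The correct tool, used in the paper, is \cite[Proposition~4.11]{CS}.)

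The paper avoids the Jordan-domain construction entirely by arguing \emph{locally}: it shows that the zero point $(r,0)$ cannot be a local minimum of $W$ in $\overline{\R^2_+}$. Indeed, if it were, then $W\ge 0$ in a relative neighborhood $N$, hence by the strong maximum principle either $W>0$ in $N\cap\R^2_+$ or $W\equiv 0$ there; the latter is excluded by unique continuation \cite{Fall-Felli}, and the former yields, via the Hopf lemma \cite[Prop.~4.11]{CS} and \eqref{eq:pointwise-normal-deriv-prop}, the contradiction $0>d_s(p u^{p-1}(r)-\lambda)w(r)=0$. Hence there is a point $z$ near $(r,0)$ with $W(z)<0$; adding the short segment $L$ from $z$ to $(r,0)$ and a path in the negative nodal domain $\cO_-$ from $z$ to an axis point $(\rho,0)$ with $\rho<0$ (supplied by Lemma~\ref{criterion-nodal-domain}(iii), not~(ii)), one gets a curve $\eta$ from $(\rho,0)$ to $(r,0)$ that is forced, by the topological lemma with $\rho<x<r<y$, to intersect $\gamma$ --- a contradiction since $\gamma$ avoids $L$ and $\cO_x\cap\cO_-=\varnothing$. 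This replaces your Case~2 entirely and makes the case distinction unnecessary.
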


\begin{proof}
  Let $\cO_x$ resp. $\cO_y$ denote the nodal domains of $W$ which contain the points $(x,0)$ and $(y,0)$, respectively. If $w(x)$ and $w(y)$ have different sign, then $W$ has a different sign on $\cO_x$ and $\cO_y$ and therefore $\cO_x$ and $\cO_y$ are different, as claimed. Suppose now that $w(x)$ and $w(y)$ have the same sign, say, $w(x)>0$ and $w(y)>0$. Moreover, we suppose by contradiction that $\cO_x = \cO_y$. Then there exists a continuous curve $\gamma:[0,1] \to \cO_x$ joining the points $x$ and $y$. By assumption, there exists a point $r \in (x,y)$ with $w(r)=0$. Since $\gamma([0,1])$ is compact and does not contain the point $(r,0)$, we have 
    $$
    d:= \dist((r,0),\gamma([0,1]) >0.
    $$
    We claim that $(r,0)$ is not a local minimum of $W$ in $\overline{\R^2_+}$. Suppose by contradiction that there exists a relative neighborhood $N$ of $(r,0)$ in $\overline{\R^2_+}$ with $W(z) \ge W(r,0)=w(r)=0$ for $z \in N$. By the strong maximum principle for equation~(\ref{eq:extens1-equation}) with $W$ in place of $V$, we then have $W>0$ in $N \cap \R^2_+$ or $W \equiv 0$ in $N \cap \R^2_+$. The latter implies that $W\equiv 0$ in $\ov{\R^2_+}$ by unique continuation (see \cite{Fall-Felli}), which is not possible since $w \not \equiv 0$. Hence $W>0$ in $N \cap \R^2_+$ and therefore (\ref{eq:pointwise-normal-deriv-prop}) and  the Hopf type boundary lemma for equation (\ref{eq:extens1-equation}) given in \cite[Proposition 4.11]{CS} implies that 
$$
0> - \lim_{t\to  0}t^{1-2s}\de_t W (r,t)=d_s \Ds w(r) = (pu^{p-1}(r)-\lambda)w(r)= 0,
$$
which also yields a contradiction. Hence $(r,0)$ is not a local minimum of $W$, and therefore there exists a point $z  \in \overline{\R^2_+}$ with $W(z)<0$ and $|z-(r,0)|< d$. Then the line segment $L$ joining the points $z$ and $(r,0)$ does not intersect the curve $\gamma$. We denote by $\cO_-$ the nodal domain of $W$ containing the point $z$. Then by Lemma~\ref{criterion-nodal-domain}(iii), there exists a point $\rho < 0$ with $(\rho,0) \in \cO_-$. Moreover, there exists a continuous curve $\tilde \eta:[0,1] \to \cO_-$ joining the points $(\rho,0)$ and $z$. By adding the line segment $L$, we arrive at a continuous curve $\eta: [0,1] \to \ov{\R^2_+}$ joining the points $(\rho,0)$ and $(r,0)$. Since $\rho < x < r < y$, we may apply Lemma~\ref{sec:topological-lemma-1} from the appendix to see that the curves $\gamma$ and $\eta$ must intersect. Since, as noted above, $\gamma$ does not intersect the line segment $L$, the curves $\gamma$ and $\tilde \eta$ must have an intersection point. This however is impossible, as $\cO_x \cap \cO_- = \varnothing$ since $W$ is positive in $\cO_x$ and negative in $\cO_-$. The contradiction shows that $\cO_x \not = \cO_y$, as claimed. 
\end{proof}

\begin{lemma}
  \label{consequence-odd-nondegeneracyR}

Let  $w \in \cH^s(B_R)$ be  an  even nontrivial  solution to   \eqref{eq:lin-simple-lambda-0-1}.
Then $\int_{\R}w dx\not=0$.  If $R<+\infty$ and $\int_{\R}w dx>0$, then 
$$
\psi_w(R)>0\qquad\textrm{ and } \qquad \Ds w<0\quad\textrm{ on $\R\setminus \ov{B_R}$.}
$$
\end{lemma}
\begin{proof}
Let  $W$ be the $s$-harmonic extension of $w$. 
By the variant of the classical Courant nodal domain theorem given in \cite[Prop. 5.2]{FLS}, $W$ has a finite number of nodal domains and by Lemma \ref{nodal-domains-extension-corollary},  we have that  $w=W(\cdot,0)$ changes sign a finite number of times on $\R$.  Up to replacing  $W$ with $-W$ if necessary,   we can assume that there exists  $ x_*\in (0,R)$ such that
$$
w(x_*) = W(x_*,0)>0 \qquad \text{and}\qquad w(x)= W(x,0)\geq0 \quad \text{for all $x> x_*$.}
$$
We let $\O_+$ be the nodal domain of $W$  containing $ (x_*,0)$.  By Lemma \ref{consequence-odd-nondegeneracyR}, we have that $(-x_*,0)\in \O_+$.  Hence  there  exists a {simple}  continuous  curve $ \g:[0,1]\to \O_+ $ with $\g((0,1))\subset \R\times (0,+\infty)$,  $ \g(0)=(x_*,0)$ and $ \g(1)=(-x_*,0)$.
Then $\gamma$ separates the set $\R\times (0,+\infty)$ into a bounded domain and an unbounded  domain\footnote{Eventually, we can   reflect the curve $ \g$ with respect to the line $\R\times \{0\}$ to get a closed simple curve $\ov \g$ and then use the Jordan curve theorem. }.
   We  call $\cO$ the unbounded one, and we note  that 
$$
\de \cO=\cL:=(-\infty,-x_*)\cup \g([0,1])\cup(x_*,+\infty).
$$
 We let $f \in C^\infty_c(-x_*,x_*) \setminus \{0\}$   with $f \ge 0$   on $(-x_*,x_*)$, and we let $\phi \in \cH^s(B_R)$ be the unique weak solution of
  $$
  (-\Delta)^s \phi+\l \phi  = f \quad \text{in $B_R$.}
  $$
  By the fractional strong maximum principle (see e.g. \cite{JW-2019} or \cite[Prop. 3.3 and Rem. 3.5]{FJ-2015}), we then have $\phi>0$ in $B_R$. Moreover, letting $\Phi\in D^{1,2}_{B_R}(\R^2_+;t^{1-2s})$ denote the $s$-harmonic extension of $\phi$, we have $\Phi>0$ in $\R^2_+$.  By the continuity of $W$ and $\Phi$,  the compactness of $\g([0,1])$ and the fact that $W$ is positive on $\gamma([0,1])$,  there exists $\mu>0$ such that 
$$
 W\geq \mu \Phi \quad\textrm{ on $\g([0,1])$}.
$$
Consequently, the function $H:=W-\mu \Phi\in D^{1,2}_{B_R}(\R^2_+;t^{1-2s})$ satisfies 
\begin{align*}
\begin{cases}
\div(t^{1-2s}\n H)= 0&\quad\textrm{ in $\cO$}\\
H\geq 0& \quad\textrm{ on $\g([0,1])  $,}\\
-t^{1-2s}\de_t H(x,0)+\l H (x,0)\geq  0&  \quad\textrm{ for $x\in   B_R\setminus [-x_*,x_*]$.}
\end{cases}
\end{align*}
where, we used that   $f\equiv 0$ on $  B_R\setminus [-x_*,x_*]$.     We can thus apply the weak maximum principle to deduce that  
\be \label{eq:ineq-okWPhi}
W\geq \mu \Phi \qquad\textrm{ in $\ov{\cO}$.}
\ee
In particular,  since $\{(0,t),\:\, t>t_*\}\subset \cO$, for some large $t_*>0$, we have 
\begin{equation}
  \label{eq:w-Phi-comparison-asymptotic}
t^{2s}p_{1,s}   \int_{B_R}\frac{w(y) dy}{(t^2+|y|^2)^{\frac{1+2s}{2}}}\geq  \mu p_{1,s} t^{2s}   \int_{B_R}\frac{\phi(y) dy}{(t^2+|y|^2)^{\frac{1+2s}{2}}} \qquad \text{for all $t>t_*$.}  
\end{equation}
  
By \cite[Lemma C.2 (i)]{FLS}, we have that $w,\phi \in L^1(\R)$ if $R=+\infty$ and obviously $w,\phi\in L^1(B_R)$ if $R<+\infty$.   Hence, by (\ref{eq:w-Phi-comparison-asymptotic}) and the dominated convergence theorem, it follows that  
$$
\int_{B_R} w dy = \lim_{t \to +\infty}   \int_{B_R}\frac{w(y) dy}{(1+(|y|/t)^2)^{\frac{1+2s}{2}}}\ge \lim_{t \to +\infty}\int_{B_R}\frac{\mu\phi(y) dy}{(1+(|y|/t)^2)^{\frac{1+2s}{2}}} =\mu  \int_{B_R}\phi dy  >0,
$$
which ends the first part of the  proof. \\
If now $R<+\infty$, then we deduce from \eqref{eq:ineq-okWPhi} that 
\be \label{eq:ineq-okWPhi-trace}
w(x)=W(x,0) \geq  \mu \Phi(x,0)=\mu  \phi(x) \quad\textrm{ for all $ x\in  B_R\setminus [-x_*,x_*]$.}
\ee
In addition,  by the fractional Hopf Lemma (see e.g. \cite[Proposition 3.3]{FJ-2015}), we have that $\psi_\phi(R)=\lim \limits_{|x|\nearrow R}\frac{\phi(x)}{(R-|x|)^s}>0$.  From this and \eqref{eq:ineq-okWPhi-trace}, we conclude that $\psi_w(R)>0$, as desired.   Furthermore, by the Hopf type boundary lemma given in \cite[Proposition 4.11]{CS}, we have $t^{1-2s}\de_t \Phi(x,0)>0$ for all $x\in \R\setminus \ov{B_R}$.  Using this and  \eqref{eq:ineq-okWPhi} we obtain
$$
d_s \Ds w=-t^{1-2s}\de_t W(\cdot ,0)\leq-\mu t^{1-2s}\de_t \Phi(\cdot ,0) <0 \qquad \text{on $ \R\setminus \ov{B_R}$.}
$$
\end{proof}

We are now in a position to complete the proof  of the nondegeneracy results.

\begin{proof}[Proof of Theorem~\ref{radial-nondegeneracy-combined}  (completed)]
We first assume that $\lambda>0$, and that $\Lambda_2 \ge p$ for the second eigenvalue of the weighted eigenvalue problem \eqref{eq:weighted-eigen}. 
Assuming, as before, by contradiction the existence of a fixed even nontrival solution $w \in \cH^s(B_R)$ of (\ref{eq:lin-simple-lambda-0-1}) with $R=1$ and recalling that $\Lambda_1 = 1$ with eigenfunction $u$, we then deduce that $\Lambda_2 = p$. Thus a variant of the classical Courant nodal domain theorem given in \cite[Prop. 5.2]{FLS} shows that $W$ has precisely two nodal domains $\cO_\pm$ with $W>0$ in $\cO_+$ and $W<0$ in $\cO_-$. It then follows from Lemmas~\ref{criterion-nodal-domain} and \ref{nodal-domains-extension-corollary} that $w$ changes sign exactly once in $(0,1)$. So, after replacing $w$ with $-w$ if necessary, there exists a point $r_0 \in (0,1)$ with
\begin{equation}
  \label{eq:r-0-property}
\text{$w \ge 0$, $w \not \equiv 0$ in $[-r_0,r_0]\qquad $ and $\qquad w \le 0$, $w \not \equiv 0$ in $[-1,-r_0] \cup [r_0,1]$.}  
\end{equation}
Combining this information with Lemma~\ref{lemma-radial-boundary-general-eigenfunction-M1}, we find that
  $$
  0 \ge   \psi_w(1) = - \frac{s \lambda}{\Gamma^2(1+s)\psi_u(1)}\int_{B} u w\,dx
  $$
  and therefore,  provided that  $\l>0$, 
  \begin{equation}
    \label{eq:u-w-positive-integral}
  \int_B u w\,dx \geq 0.
  \end{equation}
  On the other hand, since the function $u^{p-1}$ is positive, even and strictly decreasing in $|x|$, there exists $\kappa>0$ with the property that
  $u^{p} - \kappa u >0$ for $|x| <r_0$ and $u^{p} - \kappa u <0$ for $r_0 < |x|<1$, which by (\ref{orthogonality-u-p}) and (\ref{eq:r-0-property}) implies that 
  $$
  0 < \int_{B}(u^{p} - \kappa u) w \,dx = - \kappa  \int_{B} u w\,dx.
  $$
  This contradicts (\ref{eq:u-w-positive-integral}), and the contradiction finishes the proof of the theorem in the case $\l>0$ and $\L_2\geq p$, which is (i).\\[0.2cm]
  Finally, we consider the case where $\lambda=0$. In this case, Lemma~\ref{consequence-odd-nondegeneracyR} shows that $\psi_w(1)<0$, and this contradicts Lemma~\ref{lemma-radial-boundary-general-eigenfunction-M1}.
The proof of (ii) is thus finished.
\end{proof}
\begin{proof}[ Theorem \ref{radial-nondegeneracy-p2R} (completed)]
  We let $v \in \cH^s(\R)$ be a solution to  (\ref{eq:lin-simple-lambda-0-1}), and we define   $x\mapsto w(x):=\frac{v(x)+v(-x)}{2}$.
  Since $u$ is even,  we have that 
   \be
   \label{eq:lin-simple-lambda-0-R-ev}
    (-\Delta)^s w + \l  w = 2 u w \quad\textrm{  in $\R$. }
    \ee
Then   by Lemma~\ref{lemma-radial-boundary-general-eigenfunction-M1} we have 
\be\label{eq:perpL2}
\int_{\R} w(y)u(y)dy=0. 
\ee
Now integrating  \eqref{eq:lin-simple-lambda-0-R-ev} on $\R$,  we obtain
\begin{equation}
  \label{eq:integrate-equation-over-R}
0=\int_{\R}\Ds w(y)dy=-\l \int_{\R} w(y)dy+2\int_{\R} w(y)u(y)dy.
\end{equation}
Note here that the first equality in (\ref{eq:integrate-equation-over-R}) follows by approximation. Indeed, letting $\chi_n(x)=\chi( x/n)$ for $n \in \N$, where $\chi $ is given by \eqref{eq:def-chi}, we can multiply  \eqref{eq:lin-simple-lambda-0-1} with $\chi_n$ and integrate on $\R$.  Then using that $ \int_{\R}\Ds w(y)\chi_n (y)dy=  \int_{\R}w(y) \Ds  \chi_n (y)dy$ and the fact that  $\|\Ds  \chi_n \|_{L^\infty(\R)}\le C n^{-2s}$ with some constant $C>0$, we get $\int_{\R}\Ds w(y) dy=0$ from the dominated convergence theorem.

Combining (\ref{eq:integrate-equation-over-R}) with \eqref{eq:perpL2},  we get $\int_{\R} w(y)dy=0$.  It then follows   from Lemma \ref{consequence-odd-nondegeneracyR} that $w\equiv 0$ on $\R$.  We thus conclude that $v$ is an odd function.  Finally Theorem \ref{th:nondeg}$(ii)$ implies  that $v$ is proportional to $u'$.\\
\end{proof}
\begin{proof}[Proof of Theorem \ref{radial-nondegeneracy-p2B}    (completed)]
  We assume for simplicity that $R=1$ and we recall that $B=B_1=(-1,1)$.
  We let $v \in \cH^s(B)$ be a solution to (\ref{eq:lin-simple-lambda-0-1}) with $R=1$, and we consider again $x\mapsto w(x):=\frac{v(x)+v(-x)}{2}$, which satisfies
   \be
   \label{eq:lin-simple-lambda-0-R-ev-B}
    (-\Delta)^s w + \l  w = 2 u w \quad\textrm{  in $B$. }
    \ee
We first claim that 
\be\label{eq:claimB_R}
\int_{B}[ - \l w+2 uw](y)dy+ \int_{\R\setminus \ov B}\Ds w(y)dy=0. 
\ee
Let us postpone the proof of this claim and finish the proof of the Theorem. 
Combining Lemma \ref{lemma-radial-boundary-general-eigenfunction-M1} with \eqref{eq:lin-simple-lambda-0-R-ev-B}, we find that
\begin{align*}
%
0&=-\l \int_{\R} w(y)dy-\frac{2}{s \l} \psi_u(1)\psi_w(1)\G^2(1+s)+\int_{\R\setminus\ov  B}\Ds w(y)dy.
\end{align*}
Since $\l> 0$ and $\psi_u(1)>0$, Lemma \ref{consequence-odd-nondegeneracyR} now implies that $w\equiv 0$ on $\R$, so that $v$ is an odd function. Finally, Theorem \ref{th:nondeg}$(i)$ yields $v\equiv 0$  on $\R$,  as required.

It thus remains to prove \eqref{eq:claimB_R},  and for this we argue by approximation. Let $\z_k$ be given by \eqref{eq:def-z-k-etc}, and note that $\z_k w\in C^\infty_c(B) \subset C^\infty_c(\R)$. By  \eqref{eq:lin-simple-lambda-0-1}, we have 
\begin{align*}
&\Ds (\z_k w)=\z_k \Ds w+w \Ds \z_k-I(w,\z_k)\\
&=[ - \l w+2 uw]\z_k +w \Ds \z_k-I(w,\z_k) \qquad\textrm{ in $B$,}
\end{align*}
where $I(w,\z_k)(y) =c_{1,s}\int_{\R}\frac{(w(x)-w(y))(\z_k(x)-\z_k(y))}{|x-y|^{1+2s}}dx$.
Hence,  as in the proof of  Theorem \ref{radial-nondegeneracy-p2R} above,  we have 
\begin{align}\label{eq:I-before-lim}
&0=\int_{\R}\Ds (\z_kw)(y)dy=\int_{B}\Ds (\z_kw)(y)dy+ \int_{\R\setminus \ov B}\Ds (\z_k w)(y)dy \\
&=\int_{B}[ -\l \z_k w +2  \z_k wu] (y)dy+\int_{B}[w \Ds \z_k-I(w,\z_k)](y) dy  + \int_{\R\setminus \ov B}\Ds (\z_k w)(y)dy.\nonumber
\end{align}
By the dominated convergence theorem, we have
\be\label{eq:I11}
 \lim_{k\to +\infty}\int_{B}[ -\l \z_k w + \z_k w u] (y)dy=\int_{B}[ -\l  w +2  w u] (y)dy.
\ee
Using  that
$$
|\z_k(x)w(x)|\leq C (1-|x|)^s_+ \le C (|x-y|^s) \qquad \text{for all $x \in B$, $y\in \R\setminus \ov{B}$,}
$$
we have, for $y \in \R^N \setminus \overline{B}$, the estimate 
\begin{align}
|\Ds (\z_k w)(y)|&= c_{1,s}\left|\int_{\R} \frac{\z_k(x) w(x)}{|x-y|^{1+2s}} dx\right|\leq  C \int_B \frac{(1-|x|)^s}{|x-y|^{1+2s}} dx \nonumber\\
                 &\leq C \int_B \min \Bigl\{\frac{1}{|x-y|^{1+s}}, \frac{1}{|x-y|^{1+2s}} \Bigr\} dx \nonumber\\
  &\le C \Bigl( (1-|y|)^{-s} 1_{B_2 \setminus\ov  B }(y)+ (1+|y|)^{-1-2s} 1_{\R\setminus B_2}(y)\Bigr). \label{integrable-RHS}
\end{align}
Here, $C$ denotes a positive constant which may change from line to line in the following.
Furthermore,  it is straightforward to check that
$$
\lim_{k \to  +\infty} \Ds (\z_k w)(y)=  \Ds w(y) \qquad \text{for all $y\in \R\setminus \ov{B}$.}
$$
Since the RHS of (\ref{integrable-RHS}) is in $L^1(\R)$, the dominated convergence theorem  yields that
\be\label{eq:I12}
 \lim_{k\to+\infty}  \int_{\R\setminus \ov B}\Ds (\z_k w)(y)dy=\int_{\R\setminus \ov B} \Ds w(y) dy.
\ee
Next,  for $\e\in (0,1)$,  we write
\begin{align}
\int_{B}[w \Ds \z_k-I(w,\z_k)](y) dy&= \int_{B_{1-\e}}[w \Ds \z_k-I(w,\z_k) ](y)dy \nonumber\\
&+ \int_{B\setminus B_{1-\e}}[w \Ds \z_k-I(w,\z_k) ](y)dy. \label{eq:approx-bdr}
\end{align}
Following \cite[Section 6, p. 231]{DFW},  we have that 
\be\label{eq:lim-int-zero}
\lim_{k\to +\infty} \int_{B_{1-\eps}}[w \Ds \z_k-I(w,\z_k) ](y)dy=0 .
\ee
Moreover, by \cite[Lemma 6.3 and Lemma 6.8]{DFW}), there exists $\ov \eps \in (0,1)$ and $C>0$ with 
$$
|\Ds \z_k-I(w,\z_k)|(y) \leq C \left( \frac{k^{2s}}{1+ |k(1-|y|)|^{1+2s}}+ \frac{k^s}{1+|k(1-|y|)|^{1+s}} \right)
$$
for $1-\ov \eps \le |y| \le 1$.  Using this estimate together with the fact that $|w(y)|\leq  C(1-|y|)^s_+$ for $y \in B$, we have, by the change of variable $r = k(1-|y|)$,  
\begin{align*}
  &\int_{B \setminus B_{1-\ov \eps}}|w \Ds \z_k-I(w,\z_k) |(y)dy\\
  &\le
                                                                 C \int_{B \setminus B_{1-\ov \eps}}\left( \frac{k^{2s}(1-|y|)^s }{1+ |k(1-|y|)|^{1+2s}}+ \frac{k^s}{1+|k(1-|y|)|^{1+s}} \right)dy \\
                                                               &=\frac{C}{k} \int_{0}^{k\ov \eps} \left(\frac{k^{2s}}{1+ r^{1+2s}}\Bigl(\frac{r}{k}\Bigr)^s+ \frac{k^s}{1+r^{1+s}} \right)dr \le C k^{s-1} \int_{0}^{\infty} \frac{1}{1+r^{1+s}}dr \to 0 \quad \text{as $k \to +\infty$.}
\end{align*}
Combining this with  \eqref{eq:lim-int-zero}, we obtain
$$
\lim_{k\to +\infty} \int_{B}[w \Ds \z_k-I(w,\z_k) ](y)dy=0 .
$$
This implies, together with \eqref{eq:I-before-lim}, \eqref{eq:I11} and \eqref{eq:I12}, that \eqref{eq:claimB_R} holds, as required.
\end{proof}

\section{Uniqueness in the case of $B=(-1,1)$}
\label{sec:uniqueness-case-ball}

In this section, we shall finish the proofs of Theorems~\ref{new-uniqueness-intervall} and  of  Theorems~\ref{new-uniqueness-intervall-GS} by a continuation argument. We start with the following locally uniform estimates.
\begin{lemma}\label{lem:unif-bnd}
  Let $1<p_0<2^*_s-1$, and let $\lambda_0>-\l_1(B)$. Then there exists $\d>0$ and $C >0$ such that for all $p\in (p_0-\d, p_0+\d)$, $\l\in (\l_0-\d,\l_0+\d)$ and any  $u \in\cH^s{(B)}\cap C^s(\R^N)$ solving the problem
$$
(\cP_{p,\l}) \qquad\qquad \Ds u +\l u = u^{p}, \quad u>0 \qquad \text{ in ${B}$},\qquad u = 0 \qquad \text{on $\R \setminus B$.}
$$
 we have
\begin{itemize}
\item[(i)] $\|u\|_{L^\infty{(B)}}\leq C$;
\item[(ii)] $[u]_s+\|u\|_{C^s(\R^N)}\leq C$.
\end{itemize}
Moreover,  let $(p_n)_{n\in \N}$ and $(\l_n)_{n\in \N}$ be  sequences converging  to some $\ov p\in (1,2^*_s-1)$ and $\ov \l>-\l_1(B)$, respectively.  For $n\in \N$, we let  $u_n\in \cH^s(B)$ be a solution to  $(\cP_{p_n,\l_n})$.
Then,   $(u_n)_{n\in \N}$  possesses  a subsequence that  weakly (resp.  strongly) converges   in $\cH^s(B)$ (resp. in $C(\ov B)$)  to a solution $v$ of $ (\cP_{\ov p,\ov \l})$.  In particular if $u_n$ is a ground state solution of $(\cP_{p_n,\l_n})$ for all $n\in \N$, then $v$ is a ground state solution of $(\cP_{\ov p,\ov \l})$. 
\end{lemma}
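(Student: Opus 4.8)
The plan is to derive everything from a single a priori bound, namely the uniform $L^\infty$ estimate (i), which is the heart of the statement; the remaining assertions then follow by routine fractional elliptic regularity and compactness arguments.

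For (i) I would argue by contradiction in the spirit of Gidas--Spruck. Assume there are $p_n \to p_0$ and positive solutions $u_n$ of $(\cP_{p_n})$ with $M_n := \|u_n\|_{L^\infty(B)} = u_n(x_n) \to \infty$, and rescale by setting $\mu_n := M_n^{-(p_n-1)/(2s)} \to 0$ and $v_n(y) := M_n^{-1}u_n(x_n+\mu_n y)$ on the dilated domain $\Omega_n := \mu_n^{-1}(B-x_n)$, extended by $0$ outside. Then $0 \le v_n \le 1$, $v_n(0) = 1$, and $\Ds v_n = v_n^{p_n} - \lambda M_n^{1-p_n}v_n$ in $\Omega_n$ with $\lambda M_n^{1-p_n} \to 0$. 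Passing to a subsequence, I would distinguish whether $d_n := \dist(0,\partial\Omega_n) = \mu_n^{-1}\dist(x_n,\partial B)$ tends to $\infty$ or stays bounded. In the first case, uniform interior regularity for $\Ds$ (as in \cite{RS16a}), together with $\|v_n\|_{L^\infty(\R^N)} \le 1$, gives along a subsequence local convergence $v_n \to v$ to a bounded positive solution of $\Ds v = v^{p_0}$ on $\R^N$, which contradicts the Liouville theorem for the subcritical fractional Lane--Emden equation (applicable since $1<p_0<2^*_s-1$). In the second case $\dist(x_n,\partial B)\to 0$, so after a rotation $\Omega_n$ converges locally to a half-space $H$; using now also uniform \emph{boundary} regularity estimates for $\Ds$, I would obtain local convergence $v_n \to v$ with $v(0) = 1$, $v \equiv 0$ outside $H$ and $\Ds v = v^{p_0}$ in $H$, which (since $v(0)=1$) forces $0$ to be an interior point of $H$ and contradicts the corresponding half-space Liouville theorem. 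I expect this step --- and in particular the boundary case, which needs fractional boundary regularity estimates that are stable under the blow-up scaling together with the half-space Liouville theorem --- to be the main obstacle.

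Granting (i), part (ii) is routine: testing $(\cP_p)$ against $u$ gives $[u]_s^2 + \lambda\|u\|_{L^2(B)}^2 = \int_B u^{p+1}\,dx \le C^{p+1}|B|$, which bounds $\|u\|_s$ uniformly after possibly shrinking $\delta$, while $\Ds u = u^p - \lambda u$ has uniformly bounded $L^\infty(B)$ norm, so the boundary regularity theory of \cite{RS16a} bounds $\|u\|_{C^s(\R^N)}$ uniformly. For the compactness assertion, applying (i)--(ii) with $p_0 = \overline p$ yields $\|u_n\|_s + \|u_n\|_{C^s(\R^N)} \le C$, so along a subsequence $u_n \rightharpoonup v$ in $H^s(\R^N)$ and $u_n \to v$ uniformly on $\R^N$ (Banach--Alaoglu, and Arzel\`a--Ascoli since all $u_n$ are supported in $\overline B$), with $v \in \cH^s(B)\cap C^s(\R^N)$, $v \ge 0$. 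A Nehari-type lower bound --- combining $\min(1,\lambda)\|u_n\|_s^2 \le \|u_n\|_{L^{p_n+1}(B)}^{p_n+1} \le C_S^{p_n+1}\|u_n\|_s^{p_n+1}$ (uniform Sobolev embedding) with $\|u_n\|_{L^{p_n+1}(B)}^{p_n+1} \le \|u_n\|_{L^\infty(B)}^{p_n-1}\|u_n\|_{L^2(B)}^2$ and (i) --- gives $\|u_n\|_{L^2(B)} \ge c_1 > 0$, hence $\|v\|_{L^2(B)} > 0$ and $v \not\equiv 0$. Since $u_n \to v$ uniformly on $\overline B$ and $p_n \to \overline p$, one has $u_n^{p_n} \to v^{\overline p}$ uniformly on $\overline B$, so passing to the limit in the weak formulation shows that $v$ weakly solves $\Ds v + \lambda v = v^{\overline p}$; as $v \ge 0$, $v \not\equiv 0$ and $\Ds v \ge -\lambda v$ in $B$, the strong maximum principle gives $v > 0$ in $B$, so $v$ is a solution of $(\cP_{\overline p})$, and the weak convergence in $\cH^s(B)$ and strong convergence in $C(\overline B)$ are exactly what was recorded above.

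Finally, suppose each $u_n$ is a ground state, i.e.\ the quadratic form $Q_n(w) := [w]_s^2 + \lambda\|w\|_{L^2(B)}^2 - p_n\int_B u_n^{p_n-1}w^2\,dx$ is negative definite on no two-dimensional subspace of $\cH^s(B)$. Because $0 \le u_n,v \le C$, $u_n \to v$ uniformly on $\overline B$ and $p_n \to \overline p > 1$, one has $u_n^{p_n-1} \to v^{\overline p-1}$ uniformly on $\overline B$, so on the unit sphere (with respect to $\|\cdot\|_s$) of any fixed finite-dimensional subspace $E \subset \cH^s(B)$ the forms $Q_n$ converge uniformly to $Q(w) := [w]_s^2 + \lambda\|w\|_{L^2(B)}^2 - \overline p\int_B v^{\overline p-1}w^2\,dx$. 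If $Q$ were negative definite on some two-dimensional $E$, then so would $Q_n$ for $n$ large, contradicting that $u_n$ is a ground state; thus the Morse index of $v$ is at most one, and since $\Lambda_1 = 1 < \overline p$ is always an eigenvalue of the weighted problem for $v$, this Morse index equals one, i.e.\ $v$ is a ground state of $(\cP_{\overline p})$.
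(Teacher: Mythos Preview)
Your proof is correct, and for parts (ii), the compactness assertion, and the ground-state stability it follows essentially the same lines as the paper (with you in fact supplying the Morse-index continuity argument for the ground-state part, which the paper leaves implicit).

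The genuine difference is in part (i). You run a full Gidas--Spruck blow-up centered at a maximum point $x_n$ and split into an interior case (limit on $\R^N$) and a boundary case (limit on a half-space), invoking a half-space Liouville theorem and uniform-in-$n$ boundary regularity for the latter. The paper avoids this dichotomy entirely by first recalling (Lemma~\ref{lem:qual-sol}(i)) that every positive solution of $(\cP_p)$ on the ball is radially symmetric and strictly decreasing, so the maximum is attained at the origin; they therefore rescale about $0$ and only ever need the entire-space Liouville theorem for $\Ds v = v^{p_0}$. Your approach is more general---it would work on any smooth bounded domain---but, as you yourself flag, its boundary case is the technically delicate step (boundary regularity stable under blow-up, half-space Liouville), and on the ball it is simply unnecessary. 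Since the problem is posed on $B$, you can shortcut this by using the known radial symmetry of $u_n$ to force $x_n = 0$ and discard the boundary case.
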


\begin{proof}
Arguing by contradiction, we first suppose that there exist  sequences $p_n\to p_0$ and $\l_n\to\l_0$ and a sequence of positive  solutions $u_n\in \cH^s{(B)}\cap C^s(\R^N)$ of $(\cP_{p_n})$ such that  $b_n:=\|u_n\|_{L^\infty{(B)}}\to\infty$ as $n\to\infty$.   
We define 
$$
v_n(x):=\frac{1}{b_n}u_n(x/b_n^{\frac{p_n-1}{2s}}). 
$$
By Lemma \ref{lem:qual-sol}$(i)$,  $\|v_n\|_{L^\infty(\R^N)}=v_n(0)=1$.  By direct computations,
$$
\Ds v_n+\l_n b_n^{1-p_n} v_n=v_n^{p_n}\qquad \text{in $B_{r_n}(0)\quad $ with $\quad r_n:=b_n^{\frac{p_n-1}{2s}}$.}
$$
By fractional elliptic regularity theory (see \cite[Theorem 1.1]{RS16a}), there exists $\alpha>0$ such that the functions $v_n$ are uniformly bounded in $C^{2s+\a}(K)$ for any compact set $K\subset \R^N$. After passing to a subsequence, we may thus assume that $v_n \to \ov v$ in $C^{2s+\beta}_{loc}(\R^N)$ for $0 < \beta < \alpha$, where $\ov v$ satisfies $\Ds \ov v =\ov v^{p_0}$ in $\R^N$ and $\ov v(0)=1$.  Hence by \cite[Remark 1.9]{Jin-Li-Xiong}  we have that $v\equiv 0$ which is not possible. We thus conclude that there exists $\d>0$ and $C>0$ such that $\|u\|_{L^\infty(B)}\leq C$ for all $p\in (p_0-\d, p_0+\d)$, $\l\in (\l_0-\d,\l_0+\d)$ and all $u\in\cH^s{(B)}\cap C^s(\R^N)$ solving $(\cP_{p,\l})$. This proves (i).

Now (ii) follows from boundary regularity estimate in \cite[Theorem 1.2]{RS16a} and the uniform $L^\infty$ bound in (i), again after making $C$ larger if necessary.\\
Next, we note that, for  $\l>-\l_1(B)$,  by the Poincar\'e inequality for all  $u\in\cH^s{(B)} $ solving $(\cP_{p,\l})$, we have 
$$
\k(\l) [u]_{s}^2\leq [u]_{s}^2+\l\int_B u^2 dx \leq \int_{B} u^{p+1}dx,
$$
where $\k(\l):=\min \Bigl \{1, \Bigl(1+\frac{\l}{\l_1(B)}\Bigr)\Bigr\}>0$.
From this,   the Sobolev and H\"older inequalities,  for all $q\in ( p+1, 2^*_s)$ there exists $\ov C_q:=\ov C(N,s,q)>0$  such that 
$$
\k(\l)   \ov C_q\left( \int_{B} u^{q}dx\right)^{\frac{2}{q}}\leq [u]_{s}^2 +\l\int_B u^2 dx \leq  \int_{B} u^{p+1}dx \leq |B|^{1-\frac{p+1}{q}} \left( \int_{B} u^{q}dx\right)^{\frac{p+1}{q}}.
$$
We thus conclude that for every   $u\in\cH^s{(B)} $ solving $(\cP_{p,\l})$, we have
\be\label{eq:sobestim}
 \left( \int_{B} u^{q}dx\right)^{ \frac{p-1 }{q}}\geq \k(\l) \ov C_q |B|^{-1+\frac{p+1}{q}} .
\ee
Let $(p_n)_{n\in \N}$ and $(\l_n)_{n\in \N}$ be   sequences converging to  $\ov p\in (1,2^*_s-1)$ and $\ov\l>-\l_1(B)$, respectively,    and let $u_n\in \cH^s(B)$ be a solution of  $(\cP_{p_n,\l_n})$ for all $n\in \N$.  By (ii)   we have that,  up to a subsequence,    the sequence $(u_{n})_{n\in \N}$ converges weakly  in $\cH^s(B)$ and strongly in $ C(\ov B)$ to some  $v\geq 0$. Moreover $v\in \cH^s(B)\cap C(\ov B)$ weakly   solves $\Ds v+\ov \l v =v^{\ov p}$ in $B$.  We fix $q\in (\ov p+1,2^*_s-1)$. 
Then for large $n\in \N$ such that $q>p_n+1$,  \eqref{eq:sobestim} implies that,  
$$
 \left( \int_{B} u^{q}_n dx\right)^{ \frac{p_n-1 }{q}}\geq  \k(\l_n)\ov C_q |B|^{-1+\frac{p_n+1}{q}}.
 $$
Hence letting $n\to \infty$,  we see that $ \|v \|_{L ^{q}(B)} >0 $, so that $v \gneqq0$ in $B$. 
We then write   $\Ds v+V_v\, v= 0$ with the (frozen) potential $V_v =\ov  \l-v^{\ov p-1}\in L^\infty(B)$. Thus, by the fractional strong maximum principle (see e.g. \cite{JW-2019} or\cite[Prop. 3.3. and Rem. 3.5]{FJ-2015}), we have $v>0$ in $B$, and the lemma follows.
\end{proof}

We may now complete the proofs of Theorem~\ref{new-uniqueness-intervall} and Theorem~\ref{new-uniqueness-intervall-GS}.

\begin{proof}[Proof of  Theorem~\ref{new-uniqueness-intervall} (completed)]
From\footnote{This fact was proved in \cite[Theorem 1.6]{DIS}  for $N \ge 2$ however the same proof works also for $N=1$}  \cite[Theorem 1.6]{DIS}  there exists $p_0>1$ such that  for all $p\in (1,p_0)$ there exists a unique positive function  $u_p\in\cH^s{(B)}$ satisfying $\Ds u_p = u^p_p$ in ${B}=(-1,1)$ and $u_p$ is nondegenerate\footnote{ in the sense that the equation $\Ds w=p u_p^{p-1}w $ in $B$ has only the trivial solution in $\cH^s(B)$}.
 We let $p_*>1$ be the largest number    such that $(1,p_*)$  has this uniqueness property.  
  
\noindent
\textbf{Claim.}  We have that $p_*= 2^*_s-1$. \\
\noindent
 Suppose by contradiction that   $p_*<2^*_s-1$.    \\
We fix $\ov p\in (1, p_*)$ and $\b\in \left(0, \min\{(\ov p -1)s, s\}\right)$. We introduce the Banach space\footnote{by $\Ds u\in  {C^\b( {\ov{B}})} $,  we mean there exists $f\in C^\b({\ov{B}} )$ such that  $\Ds u=f$ in $\calD'{(B)}$. We note in this case that $\Ds u(x)=f(x)$ for all $x\in {B}$ because $u\in C^{2s+\b}_{loc}({B})\cap C^s({\ov{B}})$ by regularity theory.} 
\be 
 \label{eq:defcalC2sb}
 \calC^{2s+\b}_0:=\left\{u\in C^{\b}(\R^N), \quad\quad  u=0 \textrm{ in $\R\setminus {B}$,}\quad  \Ds u\in  {C^\b( {\ov{B}})}  \right\}, 
\ee
endowed with the norm $\|u\|_{C^\b(\R)}+\|\Ds u\|_{C^\b({\ov{B}})}$.    
Note that, by  Lemma \ref{lem:qual-sol},   all solutions to \eqref{eq:1.1} belongs to  $ \calC^{2s+\b}_0$. 
We finally define 
$$
F: \left( 1,  \infty\right)\times   \calC^{2s+\b}_0 \to C^\b({\ov{B}}), \qquad F(p,u)=\Ds u  - |u|^{p} .
$$
It is easy to see  that $F$ is of class $C^1$ on $(1, \infty)\times  \calC^{2s+\b}_0$.  
 We have that   $F(p_*, u_{p_*})=0$  and $\de_u F(p_*,u_{p_*})= \Ds   -p_*   u^{p_*-1}_{p_*}$, which has empty kernel by  Theorem~\ref{radial-nondegeneracy-combined}(ii).  It is easily seen that  $\Ds: \calC^{2s+\b}_0 \to C^\b({\ov{B}})$ is a Fredholm map of index zero.  In addition since $u_{p_*}\in C^s({\ov{B}})$ we have that $u_{p_*}^{p_*-1}\in C^\b({\ov{B}})$ by our choice of $\b$ and also  by the Arz\`ela-Ascoli theorem the map     $v\mapsto u^{p_*-1}_{p_*} v: \calC^{2s+\b}_0 \to C^\b({\ov{B}})$ is compact\footnote{because if  $\|\Ds v_n \|_{C^\b(\ov B)}$ is bounded then   $\|v_n\|_{ C^s({\ov{B}})}$ and thus $(v_n)$ has a convergent subsequence in $C^\b({\ov{B}})$ by the choice of $\b<s$.}.  As a consequence,   $\de_u F(p_*,u_{p_*}) : \calC^{2s+\b}_0 \to C^\b({\ov{B}})$ is an isomorphism.
It then follows from the implicit function theorem that there exists $\d>0$ such that for all $p\in (p_*-\d,p_*+\d)$, there exists a unique $u_p\in B_{ \calC^{2s+\b}_0 }(u_{p_*},  \d)$ satisfying  $F(p, u_p)= 0$.  Suppose that  there exists an  other   $\ti  u_{p_*}\in \calC^{2s+\b}_0\setminus \{0\}$  satisfying $F(p_*, \ti u_{p_*})=0$.  Similarly, by nondegeneracy from Theorem~\ref{radial-nondegeneracy-combined}(ii), decreasing  $\d$ if necessary,  we have  that for all $p\in (p_*- \d,p_*+ \d)$, there exists a unique $\ti u_p\in B_{ \calC^{2s+\b}_0 }(\ti u_{p_*},   \d)$ satisfying  $F(p, \ti u_p)= 0$.   Note that taking $\d$ smaller if necessary,  $u_p(0)>0$ and $\ti u_{p}(0)>0$   for all $p\in (p_*-\d,p_*,+\d)$,  thanks to  the continuity of the curves $p\mapsto u_p$ and $p\mapsto \ti u_p$ as maps $(p_*-\d,p_*+\d)\to \cC_0^{2s+\b} $, which follows from the implicit function theorem. The maximum principle then implies that $u_p>0$ in $B$ and $\ti u_p>0$ in $B$ and  they  satisfy $F(p,u_p)=F(p,\ti u_p)=0$ for all $p\in (p_*-\d,p_*+\d)$.
From the definition of $p_*$, we have that $u_{p}=\ti u_p$ for all $p\in (p_*-\d,p_*)$.    We can let $p \nearrow p_*$ and we obtain $u_{p_*}=\ti u_{p_*}$  This implies that $u_{p_*}$ is the  unique solution of $F(p_*,\cdot)=0$ in $\calC^{2s+\b}_0\setminus \{0\}$.\\

To obtain a contradiction on the assumption on $p_*$,  we prove that there exists $\e_*\in (0, \d)$ such that for all  $p\in (p_*,p_*+\e_*)$ the equation $F(p_*,\cdot)=0$  possesses a unique  solution in $ \calC^{2s+\b}_0\setminus \{0\}$.   
 If such an $\e_*$ does not exist,   then  we can find  a sequence  $(p_j)_{j\in \N} $ with  $ p_j\searrow p_*$ such that for all $j\in\N$, there exist   $u_{p_j}, \ti u_{p_j} \in \calC^{2s+\b}_0\setminus \{0\}$  satisfying $F(p_j,u_{p_j})=F(p_j,\ti u_{p_j})=0 $     with the property that     $u_{p_j}\not=\ti u_{p_j}$.    
  By Lemma \ref{lem:unif-bnd},   both  $ u_{p_j}$ and $\ti u_{p_j}$ converge to $u_{p_*}$ in $\cH^s{(B)}\cap C(\R)$ by the uniqueness of $u_{p_*}$.  We next note that $\th_j:=\frac{u_{p_j}-\ti u_{p_j}}{\|u_{p_j} -\ti u_{p_j} \|_{L ^2{(B)}}}$
satisfies $\Ds \th_j=g_j \th_j$ in ${B}$, with 
$$
g_j=p_j  \int_0^1( t u_{p_j} +(1-t) \ti u_{p_j})^{p_j-1}dt,
$$
and $\|\th_j\|_{L^2(B)}=1$.   By Lemma \ref{lem:unif-bnd}, $[\th_j]_s\leq \|g_j\|_{L^\infty{(B)}}$ is uniformly bounded for all    $j$. Hence,  up to a subsequence,  we see that $\th_{j}$ weakly converges in $\cH^s{(B)} $ and strongly in $L^2(B)$ to some $\ov\th$ in $\cH^s{(B)} $ satisfying  $\|\ov \th\|_{L^2(B)}=1$ and 
$$
\Ds\ov \th -p_*  u_{p_*}^{p_*-1} \ov \th=0 \quad \textrm{ in ${B}$.} 
$$
This is impossible by Theorem~\ref{radial-nondegeneracy-combined}(ii), and thus $\e_*$ exists, contradicting the definition of $p_*$. We thus get $p_*+1=2^*_s$ as claimed.
\end{proof}

\begin{proof}[Proof of  Theorem~\ref{new-uniqueness-intervall-GS}(completed)]
  The proof follows the proof of Theorem~\ref{new-uniqueness-intervall} almost line by line.  Here, we let $p_*>1$ be the largest number such that for every $p \in (1,p_*)$  there exists a unique positive and nondegenerate ground state solution of (\ref{eq:1.1-simplified-GS}). As in the proof of Theorem~\ref{new-uniqueness-intervall}, we deduce from \cite{DIS} that such a value $p_*$ exists, and we get
$p_*+1=2^*_s$ in the same way by using now Theorem~\ref{radial-nondegeneracy-combined}(i) in place of Theorem~\ref{radial-nondegeneracy-combined}(ii).
\end{proof}

\begin{proof}[Proof of Theorem \ref{new-uniqueness-AT} (completed)]
The asserted radial nondegeneracy property of solutions of (\ref{eq:1.1-simplified-intervall-p2B}) and (\ref{eq:1.1-simplified-intervall-p2R}) are given by Theorem \ref{radial-nondegeneracy-p2B}   and Theorem \ref{radial-nondegeneracy-p2R}.  For the proof of uniqueness of problem \eqref{eq:1.1} in the case  $B=(-1,1)$ and $p=2$,  we  argue as in the proof of Theorem~\ref{new-uniqueness-intervall} by considering the $C^1$ function 
$$
G:\R \times   \calC^{2s+\b}_0 \to C^\b({\ov{B}}), \qquad G(\l, u)=\Ds u +\l u - u^2 ,
$$
 where $\b\in (0,s)$ and   $ \calC^{2s+\b}_0$ is defined by \eqref{eq:defcalC2sb}.   We start by showing the existence of   a local  branch    of unique solutions for $\l>0$ small,  using Theorem~\ref{new-uniqueness-intervall}.\\
\noindent
\textbf{Claim.} There exists $\l_*>0$ such that for $\l\in (-\l_*,\l_*)$ there exists a unique $u_\l \in \calC^{2s+\b}_0  \setminus \{0\}$ satisfying $G(\l, u_\l)=0$. \\
\noindent
Suppose on the contrary that the statement in the claim dose not hold.  Then there exist  sequences $\l_n\to 0$ and  $u_n, v_n \in \calC^{2s+\b}_0  \setminus \{0\}$ with $u_n\not= v_n$ such that  $G(\l_n,  u_n)=G(\l_n,  v_n)=0$.   Then by  Lemma \ref{lem:unif-bnd} we have that the sequences $u_n$ and $v_n$  converge in $\cH^s(B)\cap C^s(\R^N)$,  respectively, up to subsequences,     to some functions $u, v\in \calC^{2s+\b}_0  \setminus \{0\}$ satisfying $G(0,u)=G(0,v)=0$.  Hence by Theorem~\ref{new-uniqueness-intervall},  we have $u=v$.  On the other hand  the function $w_n:=\frac{u_n-v_n}{\|u_n-v_n\|_{L^2(B)}}\in \cH^s(B) $ satisfies
$$
\Ds w_n+\l_n w_n=g_nw_n \qquad\textrm{ in $B$,}
$$
with $g_n:=   u_n+v_{n} $ which is bounded in $C^s(\R)$.
Hence, up to a subsequence,  $w_n$ weakly (resp. strongly) converges in $\cH^s(B) $ (resp. in $L^2(B)$)  to some  $w$ satisfying  $\Ds w=2 uw$ in $B$  and $\|w\|_{L^2(B)}=1$.  This is  in contradiction with Theorem~\ref{new-uniqueness-intervall}.  The claim is thus proved.\\
To continue the unique local branch, we recall that, in view of Theorem \ref{radial-nondegeneracy-p2B},  $\de_u G(\l, u)$ is an isomorphism for all $\l>0$ and $u\in \calC^{2s+\b}_0  \setminus \{0\}$.   We can now argue as in the proof of Theorem~\ref{new-uniqueness-intervall} to conclude that the largest $\l_*>0$ for which for all $\l\in (0,\l_*)$  the equation  $G(\l,\cdot)=0$ has a unique solution in $\calC^{2s+\b}_0  \setminus \{0\}$   is $+\infty$. This completes the uniqueness of the solution to \eqref{eq:1.1} in the case $p=2$.\\
\\
%
In the case of   \eqref{eq:1.1RN},  the proof of uniqueness   follows the argument of \cite[Section 8]{FLS}, replacing ground state with solution. Note that Theorem \ref{radial-nondegeneracy-p2R}  implies that the hypothesis of     \cite[Proposition 8.1]{FLS} are verified for all $s>\frac{1}{6}$, $N=1$ and $p=2$.   One  therefore repeat the proof of  \cite[Proposition 8.4]{FLS}, replacing ground state solution with  any even solution  to  \eqref{eq:1.1RN}, to conclude uniqueness of even solutions to \eqref{eq:1.1RN}  in $H^s(\R)$.
 \end{proof}

\section{The special case $s = \frac{1}{2}$ and $p = 1+ \frac{2}{N+1}$}
\label{special-case}
This section is devoted to the proof of   Theorem~\ref{new-gagliardo-nirenberg-uniqueness}. 
More precisely, we first determine the unique positive radial ground state solution of $(-\Delta )^{\frac{1}{2}} u+ u=u^{p}$ in $\mathbb{\R}^N$, $N \ge 1$ in the special case $p = 1+ \frac{2}{N+1}$. Then, we compute the best constants in the associated Sobolev and Gagliardo-Nirenberg inequalities. We start with the following lemma.
 
\begin{lemma}\label{lem:expli-sol}
 For $a>0$, let $u_a \in H^{1/2}(\R^N) \cap C^\infty(\R^N)$ be defined by 
\be\label{eq:expl-sol}
u_a(x)= c_N \frac{a}{\left( a^2+ |x|^2 \right)^\frac{N+1}{2}},  \qquad\textrm{  with  $c_N=\frac{\G((N+1)/2)}{\pi^{(N+1)/2}}$}.
\ee
Then we have 
\begin{equation}
  \label{eq:explicit-equation}
 (-\D)^{\frac{1}{2}} u_a+ \frac{1}{a}  u_a=a^{2-p}\a_N u^p_a, \qquad 
\end{equation}
 with $p:=1+\frac{2}{N+1}$ and
 \begin{equation}
   \label{eq:def-alpha-N}
\a_N:=(N+1) c_N^{-\frac{2}{N+1}}= \frac{(N+1) \pi}{\G((N+1)/2)^{\frac{2}{N+1}}}.
  \end{equation}
\end{lemma}
\begin{proof}
By \cite[Theorem 1.14]{Stein-Weiss}, we have 
\be\label{eq:Four-e}
\cF (e^{-2\pi a |\cdot|})(x) = c_N \frac{a}{\left(a^2+ |x|^2 \right)^\frac{N+1}{2}}\qquad \text{for $x \in \R^N$}
\ee
with $c_N$ given in \eqref{eq:expl-sol}, where $\cF$ denotes the standard Fourier transform \footnote{We are considering $\cF(f)(\xi):=\int_{\R^N}f(x)e^{-2\pi \i \xi \cdot x}dx$ and   $\cF^{-1}(f)(x):=\int_{\R^N}f(\xi)e^{2\pi \i x \cdot \xi}d\xi$ so that $\cF\circ \cF^{-1}=\rm{id}.$}. 
Therefore 
\begin{align*}
\cF(  (-\D)^{\frac{1}{2}}  u_a)(\xi)=2\pi|\xi|e^{-2\pi a |\xi|}=- \frac{d}{d a} e^{-2\pi a |\xi|}.
\end{align*}
Taking inverse Fourier transform, we obtain 
\begin{align*}
  (-\D)^{\frac{1}{2}} u_a(x)&= -\frac{d}{d a}\cF^{-1}(e^{-2\pi a |\xi|})(x)= -\frac{d}{d a} u_a(x),
\end{align*}
where, by \eqref{eq:expl-sol},
$$
\frac{d}{d a} u_a(x) = c_N \frac{1}{\left( a^2+ |x|^2 \right)^\frac{N+1}{2}} - {(N+1)} c_N \frac{a^2}{\left( a^2+ |x|^2 \right)^{\frac{N+1}{2}+1}}= \frac{u_a(x)}{a} - a^{2-p}\a_N u^p_a(x)
$$
with $p:=1+\frac{2}{N+1}$ and $\a_N$ given in (\ref{eq:def-alpha-N}). Combining these identities gives (\ref{eq:explicit-equation}).
\end{proof}

\begin{lemma}\label{lem:xdotnu}
  Let $u= \alpha_N^{\frac{1}{p-1}} u_1$, i.e.,
  \begin{equation}
    \label{eq:definitionu-explicit}
  u(x)= \alpha_N^{\frac{N+1}{2}}u_1(x) = \frac{(N+1)^{\frac{N+1}{2}}}{c_N}u_1(x)= \frac{(N+1)^{\frac{N+1}{2}}}{\left(1+ |x|^2 \right)^\frac{N+1}{2}},
  \end{equation}
  where $u_1$ is given by \eqref{eq:expl-sol} with $a=1$. Then we have 
\begin{equation}
  \label{eq:explicit-equation-special-case}
 (-\D)^{\frac{1}{2}} u+ u= u^p 
\end{equation}
and
\begin{equation}
  \label{radial-gradient-equation}
x \cdot \n u=- (N+1) u+   u^{p}.
\end{equation}
\end{lemma}

\begin{proof}
The equation (\ref{eq:explicit-equation-special-case}) immediately follows from~(\ref{eq:explicit-equation}). Moreover, we have 
\begin{align*}
  x \cdot \n u &=-(N+1)^{1+\frac{N+1}{2}} \frac{ |x|^2}{\left( 1+ |x|^2 \right)^{\frac{N+3}{2}}}\\
&=-(N+1)^{1+\frac{N+1}{2}} \Bigl(\frac{1}{\left( 1+ |x|^2 \right)^{\frac{N+1}{2}}} -\frac{1}{\left( 1+ |x|^2 \right)^{1+\frac{N+1}{2}}}\Bigr)=- (N+1) u+  u^{p}.
\end{align*}
\end{proof}

From now on, we fix $u$ as given in (\ref{eq:definitionu-explicit}), and we consider the eigenvalues   $0< \L_1^r<\L_2^r\leq \dots$ of the weighted radial eigenvalue problem
\be\label{eq:weighted-eigen-R^N-section}
 (-\Delta)^{\frac{1}{2}} w+w= \L u^{p-1} w \qquad w\in H^{1/2}_{r}(\R^N).
\ee
Recall that the first radial eigenvalue $\L_2^r=1$ with eigenspace spanned by $u$.
\begin{proposition}
\label{proposition-radial-nondeg-explicit}  
The second radial eigenvalue $\L_2^r$ in the weighted eigenvalue problem  \eqref{eq:weighted-eigen-R^N-section}   satisfies $\L_2^r>p$.
\end{proposition}

\begin{proof}
We consider the linearized operator $L = (-\Delta)^{\frac{1}{2}}  + 1 - p u^{p-1}$ and note that
$$
L v = - u \qquad \text{with}\quad  v = \frac{1}{p-1}u + x \cdot \nabla u.
$$
This follows from (\ref{eq:explicit-equation-special-case}) and the well known identity
$$
(-\Delta)^s [x \cdot \nabla u] = 2s (-\Delta)^s u + x \cdot \nabla [(-\Delta)^s u] \quad \text{for $s \in (0,1)$.} 
$$
Let $w \in H^{1/2}_{r}(\R^N)$ be an eigenfunction  corresponding to $\L_2^r$, which implies that
\begin{equation}
 \label{L-u-w-equation}
L w = (\Lambda_2^r-p)u^{p-1}w.
\end{equation}
Then, since $u$ is an eigenfunction corresponding to $\L_1^r=1<\L_2^r$, we have the orthogonality property 
\be \label{eq:perp}
\int_{\R^N} u^{p}w\,dx = 0.
\ee
Moreover, by \cite[Theorem 2]{FLS}, the function $w$ changes sign precisely once in the radial variable, say at $r_0 \in (0,\infty)$. From this, \eqref{eq:perp} and the fact that $u$ is strictly decreasing in the radial variable, we deduce that  
 \be\label{eq:L2-neg}
 0= \int_{\R^N} u^{p}w\,dx> u^{p-1}(r_0)  \int_{\R^N} w u dx \quad \text{and therefore}\quad \int_{\R^N} w u dx<0.
 \ee
Using (\ref{L-u-w-equation}) and \eqref{eq:L2-neg}, we find that 
\begin{align}\label{eq:integ00}
0< -  \int_{\R^N} u w \,dx &= \int_{\R^N} (Lv)w dx =\int_{\R^N}v (Lw) dx = (\Lambda_2^r-p)\int_{\R^N} u^{p-1}v w\,dx  \nonumber\\
                   &= (\Lambda_2^r-p)\frac{1}{p-1}\int_{\R^N} u^{p} w\,dx +(\Lambda_2^r -p)\int_{\R^N} u^{p-1} (x \cdot \nabla u)w\,dx \nonumber\\
                   & = (\Lambda_2^r -p)\int_{\R^N} u^{p-1} (x \cdot \nabla u)w\,dx .
\end{align}
Moreover, from Lemma \ref{lem:xdotnu}  and \eqref{eq:perp}, we infer that 
\begin{equation}
  \label{eq:integ000}
\int_{\R^N} u^{p-1} (x \cdot \nabla u)w\,dx = \int_{\R^N} w u^{2p-1}dx >  u^{p-1}(r_0)\int_{\R^N} w u^{p}dx=0.
 \end{equation}
Here we used again that $u$ is strictly decreasing in the radial variable. Combining (\ref{eq:integ00}) and (\ref{eq:integ000}), we deduce that $\Lambda_2^r -p>0$, as claimed.
 \end{proof}

The following proposition completes the first part of the proof of Theorem~\ref{new-gagliardo-nirenberg-uniqueness}. 

 \begin{proposition}
\label{sec:special-case-s}
The function $u$ given in (\ref{eq:definitionu-explicit}) is the unique radial ground state solution of (\ref{eq:explicit-equation-special-case}), and hence it is the up to a constant unique minimizer of the subcritical fractional Sobolev quotient 
 $$
u \mapsto Q_{1/2}(u):= \frac{\|u\|_{1/2}^2}{\|u\|_{L^{p+1}(\R^N)}^2}.
 $$
 Moreover, the associated best Sobolev constant is given by
 \begin{equation}
   \label{eq:Sobolev-constant}
 S:= \inf_{u \in H^{1/2}(\R^N) \setminus \{0\}}Q_{1/2} (u)= (N+1)\Bigl( \frac{(N+2) \pi^{N/2}\Gamma(\frac{N}{2}) }{4  (N+1) \Gamma(N)}  \Bigr)^{\frac{1}{N+2}}.
 \end{equation}
\end{proposition}

\begin{proof}
  Combining Theorem~\ref{th:nondeg} and Proposition~\ref{proposition-radial-nondeg-explicit}, we deduce that $u$ is a ground state solution of (\ref{eq:explicit-equation-special-case}), so it is unique by \cite[Theorem 4]{FLS}.
Moreover, it is a well known fact (see e.g. \cite{FLS}) that the infimum in (\ref{eq:Sobolev-constant}) is attained, and that every minimizer is, up to a constant, a ground state solution of (\ref{eq:explicit-equation-special-case}). Consequently, $u$ is the unique minimizer up to a constant.
Moreover, by (\ref{eq:explicit-equation-special-case}) we have$\|u\|_{1/2}^2 = \|u\|_{L^{p+1}(\R^N)}^{p+1}$ and therefore
 $$
 S:= \frac{\|u\|_{1/2}^2}{\|u\|_{L^{p+1}(\R^N)}^2}= \|u\|_{L^{p+1}(\R^N)}^{p-1} =\Bigl(\int_{R^N} u^{p+1}dx\Bigr)^{\frac{p-1}{p+1}} 
 $$
with 
$$
\int_{R^N} u^{p+1}\,dx =  \int_{\R^ N} \frac{(N+1)^{\frac{(p+1)(N+1)}{2}}}{\left(1+ |x|^2 \right)^{\frac{(p+1)(N+1)}{2}}}\,dx = (N+1)^{N+2} \int_{\R^ N} \frac{1}{\left(1+ |x|^2 \right)^{N+2}}\,dx.
$$
Furthermore, with polar coordinates and the change of variables $\rho = \frac{1}{1+r^2}$, we compute that 
\begin{align*}
  \int_{\R^ N} \frac{1}{\left(1+ |x|^2 \right)^{N+2}}\,dx &= |S^{N-1}| \int_0^\infty r^{N-1}(1+r^2)^{-N-2}\,dr\\
                                                          &= \frac{|S^{N-1}|}{2}  \int_0^1 r^{\frac{N+2}{2}} (1-r)^{\frac{N-2}{2}}\,dr= \frac{|S^{N-1}|\Gamma(\frac{N}{2}+2)\Gamma(\frac{N}{2})}{2 \Gamma(N+2)}\\
  &=\frac{|S^{N-1}| \frac{N+2}{2} \frac{N}{2} \Gamma^2(\frac{N}{2})}{2 \Gamma(N+2)} =\frac{(N+2) \pi^{N/2}\Gamma(\frac{N}{2}) }{4
    (N+1) \Gamma(N)}.
\end{align*}
Since $\frac{p-1}{p+1} = \frac{1}{N+2}$, we thus conclude that
$$
S= (N+1)\Bigl( \frac{(N+2) \pi^{N/2}\Gamma(\frac{N}{2}) }{4
  (N+1) \Gamma(N)}  \Bigr)^{\frac{1}{N+2}},
$$
as claimed.
\end{proof}

Next we wish to compute the best constant in the associated fractional Gagliardo-Nirenberg inequality. We note the following general lemma, which follows by similar arguments as in \cite[Proof of Theorem 1]{del-pino-dolbeault}. For the reader's convenience, we include a short proof.

 \begin{lemma}
\label{lem-Sobolev-Gagliardo-Nirenberg}   
Let $s \in (0,1)$, $q \in (2, \Bigl[\frac{2N}{N-2s}\Bigr]_+)$ and 
\begin{equation}
  \label{eq:minimization-sobolev}
 S_{s,q}:= \inf_{u \in H^s(\R^N) \setminus \{0\}}\frac{\|u\|_s^2}{\|u\|_{L^{q}(\R^N)}^2}>0.
\end{equation}
Moreover, let  $\theta = \frac{N(q-2)}{2qs}$. Then 
\begin{equation}
  {GN}_{s,q}:= \inf_{u \in H^s(\R^N) \setminus \{0\}}\frac{[u]_s^{\theta} \|u\|_{L^2(\R^N)}^{1-\theta}}{\|u\|_{L^{q}(\R^N)}}= \Bigl(\theta^{\theta}(1-\theta)^{(1-\theta)}S_{s,q}\Bigr)^{1/2} .  \label{eq:gagliardo-nirenberg}
\end{equation}
\end{lemma}

\begin{proof}
We consider the scaling
 $$
H^s(\R^N) \to H^s(\R^N), \qquad  u \mapsto u_\lambda = \lambda^{N/q}u(\lambda \:\cdot\:)
$$
which preserves the $L^q$-norm, i.e.
\begin{equation}
  \label{eq:L-q-invariance}
\|u_\lambda\|_{L^q(\R^N)} = \|u\|_{L^q(\R^N)} \qquad \text{for all $u \in H^s(\R^N)$.}
\end{equation}
Since, by a change of variable, $[u_\lambda]_s^2 = \lambda^{2N/q-N+2s}[u]_s^2$ and $
 \|u_\lambda \|_{L^2(\R^N)} = \lambda^{N/q-N/2} \|u\|_{L^2(\R^N)}$ for $u \in H^s(\R^N)$, it also follows from our special choice of $\theta$ that 
 $$
 [u_\lambda]_s^{\theta}\|u_\lambda\|_{L^2(\R^N)}^{1-\theta}  =  [u]_s^{\theta}\|u\|_{L^2(\R^N)}^{1-\theta} \qquad \text{for $u \in H^s(\R^N)$, $\lambda>0$.}
$$
Next, let $u \in H^s(\R^N)$ with $\|u\|_{L^q(\R^N)}=1$ be a minimizer for (\ref{eq:minimization-sobolev}) \footnote{Standard arguments based on Schwarz symmetrization and compactness imply that the infimum in (\ref{eq:minimization-sobolev})  is attained by a radial function in $u \in H^s(\R^N)$.}. The minimization property and (\ref{eq:L-q-invariance}) then imply that
 \begin{equation*}
   0 = \frac{d}{d\lambda}\Bigl|_{\lambda = 1}\|u_\lambda\|_s^2 = (2N/q-N+2s)[u]_s^2 + (2N/q-N)\|u\|_{L^2(\R^N)}^2,
 \end{equation*}
 hence
 $$
 (1-\theta) [u]_s^2 = \theta \|u\|_{L^2(\R^N)}^2.
 $$
Consequently,
 $$
 S_{s,q}=\|u\|_s^2 = [u]_s^2 + \|u\|_{L^2(\R^N)}^2= \frac{1}{\theta}[u]_s^2 = \frac{1}{1-\theta}\|u\|_{L^2(R^N)}^2
$$
and therefore 
$$
 [u]_{s}^{\theta}\|u\|_{L^2(\R^N)}^{(1-\theta)}=\Bigl(\theta^{\theta}(1-\theta)^{(1-\theta)}S_{s,q}\Bigr)^{1/2}.
 $$
 On the other hand, for a given function $u \in H^{s}(\R^N)$ with $\|u\|_{L^q(\R^N)}=1$, we may choose $\lambda>0$ with
 $$
 (1-\theta) [u_\lambda]_s^2 = \theta \|u_\lambda \|_{L^2(\R^N)}^2,
 $$
 which implies that 
 $$
 S_{s,q} \le \|u_\lambda\|_s^2 = \frac{1}{\theta}[u_\lambda]_s^2 =
\frac{1}{1-\theta} \|u_\lambda\|_{L^2(\R^N)}^2
 $$
and  hence
 $$
 [u]_{s}^{\theta}\|u\|_{L^2(\R^N)}^{(1-\theta)} =  [u_\lambda]_{s}^{\theta}\|u_\lambda \|_{L^2(\R^N)}^{(1-\theta)} \ge \Bigl(\theta^{\theta}(1-\theta)^{(1-\theta)}S_{s,q}\Bigr)^{1/2}.
 $$
 This shows (\ref{eq:gagliardo-nirenberg}), as claimed.
\end{proof}$ $

To complete the proof of Theorem~\ref{new-gagliardo-nirenberg-uniqueness}, it remains to apply Lemma~\ref{lem-Sobolev-Gagliardo-Nirenberg} in the special case $s = \frac{1}{2}$ and $q= p+1 = \frac{2(N+2)}{N+1}$, which gives $\theta  = \frac{N}{N+2}$ and $(1-\theta) = \frac{2}{N+2}$. Since we already now that
 $$
S = S_{s,q} = ({N+1})\Bigl(\frac{\pi^{\frac{N}{2}}(N+2)\Gamma(\frac{N}{2})}{4 (N+1)\Gamma(N)}\Bigr)^{\frac{1}{N+2}}
 $$
 in this case, it follows that
 \begin{align*}
   GN_{s,q} &= \Bigl[\Bigl(\frac{N}{N+2}\Bigr)^{\frac{N}{N+2}}\Bigl(\frac{2}{N+2}\Bigr)^{\frac{2}{N+2}}({N+1})\Bigl(\frac{\pi^{\frac{N}{2}}(N+2)\Gamma(\frac{N}{2})}{4 (N+1)\Gamma(N)}\Bigr)^{\frac{1}{N+2}}\Bigr]^{1/2}\\
&= \Bigl(\frac{N^N (N+1)^{N+1} \pi^{\frac{N}{2}}\Gamma(\frac{N}{2})}{(N+2)^{N+1}\:\Gamma(N)}\Bigr)^{1/2(N+2)} .  
 \end{align*}
 Since $C_{GN} = GN_{s,q}^{-1}$, we have thus completed the proof of Theorem~\ref{new-gagliardo-nirenberg-uniqueness}.
 
\section{Appendix}

\subsection{A geometric lemma}

In the following, we consider, as before, for $\nu \in S^{N-1}$, the reflection
$$
\sigma_\nu: \R^N \to \R^N,\qquad \sigma_\nu(x)= x-2 (x \cdot \nu)\nu
$$
with respect to the hyperplane $T_\nu:= \{x \in \R^N \::\: x \cdot \nu = 0\}$.

\begin{lemma}
\label{appendix-lemma-sphere}
  Let $w,h \in C^2(S^{N-1})$ be functions with the property that 
  \begin{equation}
    \label{eq:basic-assumption}
  \frac{w(z) - w(\sigma_\nu(z))}{2} = h(\nu)\, z \cdot \nu \qquad \text{for every $\nu,z \in S^{N-1}$.}
  \end{equation}
  Let, moreover, $\nu_{max} \in S^{N-1}$ be a point with $h_{max}:= h(\nu_{max}) = \max \limits_{S^{N-1}} h$. Then we have
  \begin{equation}
    \label{eq:appendix-lemma-conclusion}
    w(z) = [w(\nu_{max})- h_{max}] + h_{max}\: z \cdot \nu_{max} \qquad \text{for $z \in S^{N-1}$.} 
  \end{equation}
In particular, $w$ is a sum of a constant and an odd function with respect to the reflection $\sigma_{\nu_{max}}$.
\end{lemma}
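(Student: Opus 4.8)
The plan is to show that the functional equation \eqref{eq:basic-assumption} forces $w$ to be the sum of a constant and the restriction to $S^{N-1}$ of a linear function, and then to pin down the coefficients using the maximality of $h(\nu_{max})$. A first easy observation is obtained by taking $z=\nu$ in \eqref{eq:basic-assumption}: this gives $h(\nu)=\tfrac12\bigl(w(\nu)-w(-\nu)\bigr)$, so $h$ is odd. For the main step I would use the spherical harmonic decomposition $w=\sum_{k\ge 0}Y_k$ with $Y_k$ in the space $\mathcal H_k$ of degree-$k$ spherical harmonics; since $w\in C^2(S^{N-1})\subset L^2(S^{N-1})$ this series converges in $L^2(S^{N-1})$, which is all that is needed. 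For each fixed unit vector $\nu$ the pullback $f\mapsto f\circ\sigma_\nu$ is an isometry of $L^2(S^{N-1})$ preserving every $\mathcal H_k$ (composition with an orthogonal map preserves harmonic polynomials and their degree), hence it commutes with the orthogonal projection $P_k$ onto $\mathcal H_k$; moreover the linear function $z\mapsto z\cdot\nu$ lies in $\mathcal H_1$. Applying $P_k$ to \eqref{eq:basic-assumption}, regarded for each fixed $\nu$ as an identity in $L^2(S^{N-1})$, one gets $\tfrac12(Y_1-Y_1\circ\sigma_\nu)=h(\nu)(z\cdot\nu)$ and, for every $k\ne 1$, $Y_k\circ\sigma_\nu=Y_k$. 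For $k\ge 2$ this means $Y_k$ is invariant under every hyperplane reflection; since for distinct $p,q\in S^{N-1}$ the reflection $\sigma_\nu$ with $\nu=\frac{p-q}{|p-q|}$ maps $p$ to $q$, the function $Y_k$ is constant, hence $Y_k\equiv 0$ because a nonzero constant lies in $\mathcal H_0$ and is orthogonal to $\mathcal H_k$. Thus $w=c+a\cdot z$ for some $c\in\R$, $a\in\R^N$ (in dimension $N=1$ this decomposition of a function on $S^0$ into its even and odd parts is immediate, and the rest of the argument is unchanged).

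To finish, I would substitute $w(z)=c+a\cdot z$ into \eqref{eq:basic-assumption}; using $\sigma_\nu(z)=z-2(z\cdot\nu)\nu$ this gives $\tfrac12(w(z)-w(\sigma_\nu z))=(a\cdot\nu)(z\cdot\nu)$, and since $z\mapsto z\cdot\nu$ is not identically zero on $S^{N-1}$, comparison with the right-hand side yields $h(\nu)=a\cdot\nu$ for all $\nu$. Hence $h_{max}=\max_{|\nu|=1}a\cdot\nu=|a|$, attained (when $a\ne0$) exactly at $\nu_{max}=a/|a|$, so $a=h_{max}\,\nu_{max}$ and $w(z)=c+h_{max}(z\cdot\nu_{max})$; the case $a=0$ is trivial since then $h\equiv0$, $w\equiv c$, and \eqref{eq:appendix-lemma-conclusion} reads $w(z)=w(\nu_{max})$. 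Evaluating at $z=\nu_{max}$ gives $c=w(\nu_{max})-h_{max}$, which is precisely \eqref{eq:appendix-lemma-conclusion}. The last sentence of the lemma is then immediate, since $\sigma_{\nu_{max}}(z)\cdot\nu_{max}=-(z\cdot\nu_{max})$ shows that $z\mapsto h_{max}(z\cdot\nu_{max})$ is odd with respect to $\sigma_{\nu_{max}}$.

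The one point that deserves real care is the reduction to spherical harmonics of degree at most $1$: one must observe that hyperplane reflections act on $L^2(S^{N-1})$ in a \emph{degree-preserving} way, so that a pointwise relation whose right-hand side is a pure degree-$1$ harmonic annihilates every degree-$\ge 2$ component of $w$, which is then forced to be both reflection-invariant (hence constant) and harmonic of positive degree (hence zero). Everything else is bookkeeping. An alternative, which in low dimensions is completely elementary, is to run the same argument directly with Fourier coefficients on $S^1$, or with the two-point structure of $S^0$ when $N=1$.
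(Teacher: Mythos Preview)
Your proof is correct and takes a genuinely different route from the paper's. The paper proceeds by reducing to $N=2$, parametrizing $S^1$ by an angle $\theta$, and differentiating the functional equation: writing $\tilde w(\theta)=w(\cos\theta,\sin\theta)$ and $\tilde h(\theta)=h(-\sin\theta,\cos\theta)$, it first obtains $\tilde w'(\theta)=\tilde h(\theta)$ and then, after a third derivative in the shift parameter, the ODE $\tilde h''=-\tilde h$, which together with the maximum condition forces $\tilde h(\theta)=h_{max}\cos\theta$; integration gives the claimed formula for $w$ on $S^1$, and the general case follows by restricting to $2$-planes through $\nu_{max}$. Your argument instead exploits that hyperplane reflections act degree-preservingly on $L^2(S^{N-1})$, so the projection of \eqref{eq:basic-assumption} onto $\mathcal H_k$ kills the right-hand side for $k\neq 1$ and forces each $Y_k$ with $k\ge 2$ to be reflection-invariant, hence constant, hence zero; this immediately yields $w(z)=c+a\cdot z$ and $h(\nu)=a\cdot\nu$, after which the identification $a=h_{max}\,\nu_{max}$ is straightforward. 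What your approach buys is that it works uniformly in all dimensions without a separate $2$-dimensional computation, uses essentially only $L^2$-structure (the $C^2$ hypothesis is hardly touched), and makes transparent why only degree-$0$ and degree-$1$ harmonics can survive. The paper's approach, on the other hand, is entirely elementary---no spherical harmonic machinery, just calculus on the circle---and is closer in spirit to the alternative you mention in your last line.
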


\begin{proof}
  Since the problem is rotationally invariant, we may assume, without loss of generality, that $h$ takes its maximum at $\nu_{max} = e_N:= (0,\dots,0,1)$.\\
  We first consider the case $N=2$.  We then write $x_\theta = (\cos \theta,\sin \theta) \in S^1$ for $\theta \in \R$ and regard all functions as $2\pi$-periodic functions of the angle $\theta$. For a fixed angle $\theta$, we consider $\nu_\theta:= (-\sin \theta, \cos \theta)$. Then the reflection at the hyperplane $T = T_{\nu_\theta}:= \{x \in S^1\::\: x \cdot \nu_\theta = 0\}$ corresponds in the angle coordinate to the map
  $$
  \vartheta \mapsto 2 \theta - \vartheta
  $$
  Setting
  $$
  \tilde w(\theta) = w(x_\theta)= w(\cos \theta,\sin \theta) \qquad \text{and}\qquad
  \tilde h(\theta) = h(\nu_\theta) = h(-\sin \theta, \cos \theta),
  $$
  we may then reformulate assumption (\ref{eq:basic-assumption}) as
  \begin{align*}
    &\frac{w(\theta+s)-w(\theta-s)}{2} = \tilde h(\theta)\:x_\theta \cdot \nu_\theta =  \tilde h(\theta)\: (\cos (\theta+s),\sin (\theta+s)) \cdot (-\sin \theta, \cos \theta)\\
    &= \tilde h(\theta)
\Bigl(\sin(\theta+s)\cos \theta -\cos(\theta+s)\sin \theta\Bigr)
   = \tilde h(\theta)\sin s \quad \text{for $\theta, s \in \R.$}   
  \end{align*}
  Differentiating in $\theta$ at $s = 0$ gives
  \begin{equation}
    \label{eq:theta-deriv-formula}
  \partial_\theta w(\theta) = \tilde h(\theta) \qquad \text{for all $\theta \in \R$.}
  \end{equation}
  Consequently, we may reformulate the assumption again as
  $$
  \frac{1}{2}\int_{\theta-s}^{\theta+s}\tilde h(\tau)d\tau = \tilde h(\theta) \sin s \quad \text{for $\theta, s \in \R.$}   
  $$
  Differentiating this identity three times in $s$ gives
  $$
  \frac{\tilde h''(\theta+s)+\tilde h''(\theta-s)}{2} = -\tilde h(\theta)\cos s  \quad \text{for $\theta, s \in \R.$} 
$$
Evaluating at $s = 0$ gives $\tilde h''(\theta) = -\tilde h(\theta)$, from which we deduce that 
\begin{equation}
    \label{eq:theta-deriv-formula-1}
\tilde h(\theta) = h_{max} \cos \theta \qquad \text{for $\theta \in \R$.}
\end{equation}
Here we used the fact that $\tilde h$ takes its maximum at zero, since $h$ takes its maximum at $\nu_{max} = (0,1)$ by assumption. Combining (\ref{eq:theta-deriv-formula}) and (\ref{eq:theta-deriv-formula-1}) gives 
\begin{align*}
w(x_\theta) &= \tilde w(\theta) = \tilde w(\frac{\pi}{2}) + h_{max} \int_{\frac{\pi}{2}}^\theta \cos \vartheta d\vartheta 
= \tilde w(\frac{\pi}{2}) + h_{max} (\sin \theta-1)\\
&= w (\nu_{max}) + h_{max} (x_\theta \cdot \nu_{max}-1) \qquad \text{for $\theta \in \R$,}
\end{align*}
which gives (\ref{eq:appendix-lemma-conclusion}) in the case $N=2$.\\
In the general case $N \ge 2$, we may repeat the above argument in the $2$-dimensional subspace $\spann \{e_*, e_N\}$, where $e_* \in S^{N-2} \times \{0\}$ is chosen arbitrarily. This then yields (\ref{eq:appendix-lemma-conclusion}) for arbitrary $z \in S^{N-1}$.
\end{proof}

\begin{corollary}
\label{appendix-corollary-sphere}
  Let $w \in C^2(\R^N)$, $h \in C^2(S^{N-1})$ and $U:(0,\infty) \to [0,\infty)$ be functions with the property that 
  \begin{equation*}
  \frac{w(x) - w(\sigma_\nu(x))}{2} =  h(\nu)U(|x|) \, \frac{x}{|x|} \cdot \nu \qquad \text{for every $\nu \in S^{N-1}, x \in \R^N \setminus \{0\}$.}
  \end{equation*}
  Let, moreover, $\nu_{max} \in S^{N-1}$ be a point with $h_{max}:= h(\nu_{max}) = \max \limits_{S^{N-1}} h$. Then we have
  \begin{equation*}
    w(x) = [w(|x| \nu_{max})- h_{max}\,U(|x|)] +  h_{max}\, U(|x|) \: \frac{x}{|x|} \cdot \nu_{max} \qquad \text{for $x \in \R^{N} \setminus \{0\}$.} 
  \end{equation*}
In particular, $w$ is a sum of a radial function and an odd function with respect to the reflection $\sigma_{\nu_{max}}$.
\end{corollary}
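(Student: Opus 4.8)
The plan is to deduce Corollary~\ref{appendix-corollary-sphere} from Lemma~\ref{appendix-lemma-sphere} by freezing the radial variable and applying the lemma on each sphere $rS^{N-1}$. Fix $r>0$ and set $w_r(z):= w(rz)$ and $h_r(\nu):= U(r)\,h(\nu)$; since $w\in C^2(\R^N)$ and $h\in C^2(S^{N-1})$, both $w_r$ and $h_r$ lie in $C^2(S^{N-1})$. Because each reflection $\sigma_\nu$ is linear, $\sigma_\nu(rz)=r\,\sigma_\nu(z)$, so evaluating the hypothesis of the corollary at $x=rz$ gives
\[
\frac{w_r(z)-w_r(\sigma_\nu(z))}{2}=\frac{w(rz)-w(\sigma_\nu(rz))}{2}=h(\nu)\,U(r)\,\frac{rz}{r}\cdot\nu=h_r(\nu)\,z\cdot\nu
\]
for all $\nu,z\in S^{N-1}$, which is precisely assumption~\eqref{eq:basic-assumption} of Lemma~\ref{appendix-lemma-sphere} for the pair $(w_r,h_r)$.

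Next I would split into two cases according to the sign of $U(r)$. If $U(r)=0$, the displayed identity forces $w_r(z)=w_r(\sigma_\nu(z))$ for all $\nu,z$, hence $w_r$ is constant; this matches the asserted formula at radius $r$, whose right-hand side reduces to $w(|x|\nu_{max})$ when $U(|x|)=0$. If $U(r)>0$, then $h_r=U(r)h$ attains its maximum over $S^{N-1}$ at the same point $\nu_{max}$ as $h$, with maximal value $U(r)h_{max}$, so Lemma~\ref{appendix-lemma-sphere} applied to $(w_r,h_r)$ and this maximizer yields
\[
w_r(z)=\bigl[w_r(\nu_{max})-U(r)h_{max}\bigr]+U(r)h_{max}\,z\cdot\nu_{max}\qquad\text{for }z\in S^{N-1}.
\]
Unfreezing $r$ by substituting $r=|x|$, $z=x/|x|$, $w_r(z)=w(x)$ and $w_r(\nu_{max})=w(|x|\nu_{max})$ produces exactly the claimed representation
\[
w(x)=\bigl[w(|x|\nu_{max})-h_{max}U(|x|)\bigr]+h_{max}U(|x|)\,\frac{x}{|x|}\cdot\nu_{max}\qquad\text{for }x\in\R^N\setminus\{0\}.
\]

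The final "in particular" assertion is then immediate: the first bracketed term depends only on $|x|$ and is therefore radial, while $x\mapsto h_{max}U(|x|)\,\frac{x}{|x|}\cdot\nu_{max}$ changes sign under $\sigma_{\nu_{max}}$ since $|\sigma_{\nu_{max}}(x)|=|x|$ and $\sigma_{\nu_{max}}(x)\cdot\nu_{max}=-x\cdot\nu_{max}$. I do not expect any genuine obstacle in this argument; it is a pure scaling/slicing reduction. The only points deserving a line of care are the degenerate radii where $U(r)=0$, handled above, and the observation that the maximizer $\nu_{max}$ can be chosen once and for all because the maximizers of $h$ and of $U(r)h$ coincide whenever $U(r)>0$.
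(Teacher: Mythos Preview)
Your proof is correct and follows essentially the same approach as the paper: freeze $r>0$, apply Lemma~\ref{appendix-lemma-sphere} to $z\mapsto w(rz)$ and $\nu\mapsto U(r)h(\nu)$, then unfreeze. You have simply fleshed out the details the paper leaves implicit, including the harmless degenerate case $U(r)=0$ and the observation that the maximizer $\nu_{max}$ is independent of $r$ when $U(r)>0$.
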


\begin{proof}
  It suffices to apply Lemma~\ref{appendix-lemma-sphere}, for fixed $r>0$, to the functions
  $$
  S^{N-1} \to \R,\qquad z \mapsto w(r z)
  $$
  in place of $w$ and
  $$
  S^{N-1} \to \R,\qquad \nu \mapsto U(r)h(\nu)
  $$
  in place of $h$.
\end{proof}

\subsection{A Hopf type lemma}

In this section, we provide a variant of the fractional Hopf type lemma in \cite[Prop. 2.2]{SV} under somewhat weaker assumptions.
\begin{lemma}
  \label{hopf-type-lemma}
  Let $H_+:= \{x \in \R^N\::\: x_1>0\}$, $T:= \{x \in \R^N\::\: x_1 = 0\}$, and let $\Omega \subset \R^N$ be a bounded set of class $C^2$ which is symmetric with respect to reflection of the $x_1$-coordinate, and let $\Omega_+:= \Omega \cap H_+$.   Moreover, let $c \in L^\infty(\Omega)$,  $\alpha>0$ and  $v \in L^1(\R^N;(1+|x|)^{-N-2s}) \cap C^{2s+\alpha}(\Omega)$ satisfy
  \begin{equation}
    \label{eq:hopf-eq}
    \left\{
      \begin{aligned}
      (-\Delta)^s v + c(x) v \ge 0 \qquad \text{in $\Omega_+$,}\\
      v \ge 0 \qquad \text{in $H_+,$}\\
      v \not \equiv 0 \qquad \text{in $\Omega_+$,}\\
      v(-x_1,x')=-v(x) \qquad \text{for $x = (x_1,x') \in \R^N$.}
      \end{aligned}
    \right.
  \end{equation}
  Then we have
  $$
  v >0 \quad \text{in $\Omega_+$}\qquad \text{and}\qquad \liminf_{t \to 0^+} \frac{v(t,x')}{t} >0 \qquad \text{for every $(0,x') \in T \cap \Omega$.}
  $$
\end{lemma}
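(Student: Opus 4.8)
The plan is to argue in two steps: first the strict positivity $v>0$ in $\Omega_+$ by an antisymmetric strong maximum principle, and then the boundary (Hopf) statement by a comparison argument near a point of $T\cap\Omega$.

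\emph{Step 1 (positivity).} Suppose for contradiction that $v(x_0)=0$ for some $x_0\in\Omega_+$. Write $\sigma$ for the reflection $x=(x_1,x')\mapsto(-x_1,x')$, so $v\circ\sigma=-v$. Since $v(x_0)=0$ and $v$ is Hölder--$C^{2s+\alpha}$ near $x_0$ (hence $v(y)=O(|y-x_0|^{2s+\alpha})$ there), and since $v\in L^1(\R^N;(1+|x|)^{-N-2s})$, the integral defining $\Ds v(x_0)$ is absolutely convergent. Splitting it over $\{y_1>0\}$ and $\{y_1<0\}$ and substituting $y\mapsto\sigma y$ in the second piece gives
$$
\Ds v(x_0)=-c_{N,s}\int_{H_+}v(y)\Bigl(\frac{1}{|x_0-y|^{N+2s}}-\frac{1}{|\sigma x_0-y|^{N+2s}}\Bigr)\,dy\le 0,
$$
because $|x_0-y|\le|\sigma x_0-y|$ for $x_0,y\in H_+$ and $v\ge 0$ on $H_+$. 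On the other hand the differential inequality yields $\Ds v(x_0)\ge -c(x_0)v(x_0)=0$, so $\Ds v(x_0)=0$; since the kernel difference above is strictly positive for a.e.\ $y\in H_+$, this forces $v\equiv 0$ on $H_+$, hence (by continuity of $v$ on $\Omega$) $v\equiv 0$ on $\Omega_+$, contradicting $v\not\equiv0$. Thus $v>0$ in $\Omega_+$. (This is precisely the pointwise reflection mechanism underlying Lemma~\ref{lem:Picone}.)

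\emph{Step 2 (linear lower bound on $T\cap\Omega$).} Fix $x_0=(0,x_0')\in T\cap\Omega$ and choose $R>0$ so small that $\overline{B_{4R}(x_0)}\subset\Omega$. With $z_0:=(2R,x_0')$ the ball $\overline{B_R(z_0)}$ is a compact subset of $\Omega_+$, so by Step~1, $m:=\min_{\overline{B_R(z_0)}}v>0$. Following the scheme of the fractional Hopf lemma in \cite[Prop.~2.2]{SV} (here under weaker hypotheses), the idea is to transport this bound to the flat face $T$ by comparison: one constructs an antisymmetric barrier $\underline v$ supported in $B_{2R}(x_0)$, with $\underline v\le 0$ on $H_+\setminus B_{2R}(x_0)$ and $\underline v\le m$ on $\overline{B_R(z_0)}$ (so $\underline v\le v$ off the intermediate half-annulus $U:=(B_{2R}(x_0)\cap H_+)\setminus\overline{B_R(z_0)}$), which is a subsolution of $\Ds\,\cdot\,+c\,\cdot$ in $U$, and which satisfies $\underline v(x_1,x_0')\ge\varepsilon_0\,x_1$ for $0<x_1<R$; the building block is the odd extension across $T$ of a first Dirichlet/torsion function on a half-ball (whose value at the flat face vanishes \emph{linearly} rather than like $\dist^s$, owing to the oddness). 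Applying the antisymmetric maximum principle for $\Ds+c$ with $c\in L^\infty$ to $v-\underline v$ — valid in bounded subsets of $H_+$, with the smallness of $R$ keeping $\|c\|_\infty$ below the relevant antisymmetric first Dirichlet eigenvalue of $U$ — gives $v\ge\underline v$ in $U$, hence $v(t,x_0')\ge\varepsilon_0 t$ for small $t>0$, i.e.\ $\liminf_{t\to0^+}v(t,x_0')/t\ge\varepsilon_0>0$. Combined with Step~1 this proves the lemma.

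\emph{Expected main obstacle.} The subtle point is Step~2: one must exhibit a barrier exhibiting exactly the \emph{linear} vanishing at the flat part $T$ of $\partial\Omega_+$, and verify the sub-solution inequality together with the comparison principle only assuming $v\in C^{2s+\alpha}(\Omega)\cap L^1(\R^N;(1+|x|)^{-N-2s})$ and $c\in L^\infty(\Omega)$. Near $T$ the principal-value integrals defining $\Ds v$ and $\Ds\underline v$ lose the odd-symmetry cancellation used in Step~1 (the integration over half-balls is genuinely one-sided), so the kernel estimates have to be done carefully; this is exactly the technical content imported from, and refined relative to, \cite[Prop.~2.2]{SV}, and all the required pointwise computations are modelled on the reflection identity in Step~1 and in Lemma~\ref{lem:Picone}.
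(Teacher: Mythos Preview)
Your Step~1 is correct (with the minor proviso that, when $2s+\alpha>1$, the bound $v(y)=O(|y-x_0|^{2s+\alpha})$ requires noting that $x_0\in\Omega_+$ is an interior minimum of $v\ge 0$, so $\nabla v(x_0)=0$, before the integral becomes absolutely convergent). The paper, however, takes a quite different and shorter route: rather than redoing the barrier construction of \cite[Prop.~2.2]{SV} under the weaker integrability hypothesis --- which you correctly flag as the main obstacle and leave as a sketch in Step~2 --- it observes that \cite[Cor.~3.3]{FJ-2015} and \cite[Prop.~2.2]{SV} already give both conclusions once $v\in H^s(\R^N)$, and simply reduces to that case by a cut-off. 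For any $\tilde x\in\Omega$ with $\tilde x_1\ge 0$, pick a radial $\psi\in C^\infty_c(\Omega)$ with $\psi\equiv 1$ on a symmetric neighbourhood $U\ni\tilde x$ in which $v\not\equiv 0$; then $\psi v\in H^s(\R^N)$ is still antisymmetric and, for $x\in U_+$, one has $\Ds(\psi v)(x)+c(x)\,\psi v(x)\ge c_{N,s}f(x)\ge 0$, where the commutator contribution
\[
f(x)=\int_{(\R^N\setminus U)\cap H_+}(1-\psi)(y)\,v(y)\Bigl(\frac{1}{|x-y|^{N+2s}}-\frac{1}{|x-\sigma y|^{N+2s}}\Bigr)dy
\]
is nonnegative by exactly the odd-reflection identity you use in Step~1. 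One then applies the cited results to $\psi v$ directly. This bypasses the barrier construction altogether, as well as the need to justify an antisymmetric comparison principle for functions outside $H^s$; your route is plausible, but completing it would require carrying out precisely the technicalities you yourself identify as the expected difficulty, whereas the paper's cut-off trick makes them unnecessary.
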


\begin{proof}
  In the case where, in addition, $v \in H^s(\R^N)$, the conclusion is contained in \cite[Cor. 3.3]{FJ-2015} and \cite[Prop. 2.2]{SV}. In fact, \cite[Prop. 2.2]{SV} also assumes the equality $(-\Delta)^s v + c(x)=0$ in $\Omega^+$, but the proof given there only requires that $(-\Delta)^s v + c(x) \ge 0$ in $\Omega^+$. To get rid of the additional assumption $v \in H^s(\R^N)$, we use a cut-off argument. So we let $v \in L^1(\R^N;(1+|x|)^{-N-2s}) \cap C^{2s+\alpha}(\Omega)$ satisfy \eqref{eq:hopf-eq}, and we let $\tilde x \in \Omega$ with $\tilde x_1 \ge 0$. We choose a radial cut-off function $\psi \in C^\infty_c(\Omega)$ with $0 \le \psi \le 1$ in such a way that there exists a (sufficiently large) neighborhood $U \subset \Omega$ of $\tilde x$ with $\psi \equiv 1$ in $U$ and $v \not \equiv 0$ in $U_+: = U \cap H_+$. We may assume also that $U$ is symmetric with respect to the $x_1$-coordinate. For $x \in U_+$ we then have
  \begin{align*}
    (-\Delta)^s(\psi v)(x) =   (-\Delta)^s v(x) +   (-\Delta)^s((\psi-1) v)(x) &\ge - c(x)v(x) + c_{N,s} f(x)\\
    &= - c(x)(\psi v)(x) + c_{N,s} f(x),
  \end{align*}
    where
    $$
    f(x)=  \int_{\R^N  \setminus U}\frac{(1-\psi)(y)v(y)}{|x-y|^{N+2s}}\,dy = \int_{(\R^N  \setminus U)\cap H_+}(1-\psi)(y)v(y)\Bigl(\frac{1}{|x-y|^{N+2s}}-\frac{1}{|x-\bar y|^{N+2s}}\Bigr) \,dy \ge 0
    $$
    and we write $\bar y = (-y_1,y')$ for $y=(y_1,y') \in \R^N$. Here we have used the oddness of the function $y \mapsto (1-\psi)(y)v(y)$. Consequently, we have 
  \begin{equation*}
    \begin{aligned}
      (-\Delta)^s (\psi v) + c(x) (\psi v) \ge 0 \qquad \text{in $U_+$,}\\
      \psi v \ge 0 \qquad \text{in $H_+,$}\\
      \psi v \not \equiv 0 \qquad \text{in $U_+$,}\\
      (\psi v)(-x_1,x')=-(\psi v) (x) \qquad \text{for $x = (x_1,x') \in \R^N$.}
      \end{aligned}
  \end{equation*}
  Since moreover $\psi v \in H^s(\R^N)$, \cite[Cor. 3.3]{FJ-2015} gives $v(\tilde x) = \psi v(\tilde x)>0$ if $\tilde x \in \Omega_+$,  and \cite[Prop. 2.2]{SV} gives 
  $$
  \liminf_{t \to 0^+} \frac{v(t,x')}{t} =   \liminf_{t \to 0^+} \frac{(\psi v)(t,x')}{t} >0 \quad \text{if $\tilde x = (0,x') \in \Omega \cap T$.}
  $$
The claim thus follows.
\end{proof}

\subsection{A topological lemma}
\label{sec:topological-lemma}

As before, we let $\R^2_+:= \{(x,t) \in \R^2\::\: t>0\}$. The following topological lemma was used in the proof of Theorem~\ref{radial-nondegeneracy-combined}.

\begin{lemma}
  \label{sec:topological-lemma-1}
  Let $x_1< x_2 <x_3 < x_4$ be real numbers. Suppose that $\gamma, \eta: [0,1] \to \overline{\R^2_+}$ are continuous curves such that  $\gamma(0)=(x_1,0)$, $\gamma(1)=(x_3,0)$, $\eta(0)=(x_2,0)$, $\eta(1)=(x_4,0)$. Then the curves $\gamma$ and $\eta$ intersect in $\overline{\R^2_+}$, i.e. there exists $t,\tilde t \in (0,1)$ with $\gamma(t)= \eta(\tilde t)$.
\end{lemma}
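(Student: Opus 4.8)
The plan is to reduce this to the Jordan curve theorem. First I would replace $\gamma$ by a simple arc $\gamma'$ in $\overline{\R^2_+}$ with the same endpoints $(x_1,0)$ and $(x_3,0)$ and with $\gamma'([0,1])\subseteq\gamma([0,1])$, using the classical fact that every path joining two distinct points contains an arc between them; since $\gamma'([0,1])\subseteq\gamma([0,1])$, it then suffices to show that $\gamma'$ and $\eta$ meet. (I note here that the parameter values cannot always be taken in the \emph{open} interval $(0,1)$ — for instance a semicircular $\gamma$ can meet a boundary segment $\eta$ only at an endpoint of $\gamma$ that is an interior point of $\eta$ — so what is actually proved, and what is used in the applications, is that $\gamma([0,1])\cap\eta([0,1])\neq\varnothing$.)

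Next I would close $\gamma'$ up into a Jordan curve. Let $\beta$ be the polygonal arc running from $(x_3,0)$ down to $(x_3,-1)$, then horizontally to $(x_1,-1)$, then up to $(x_1,0)$. Since $\gamma'$ lies in $\overline{\R^2_+}$ while every point of $\beta$ other than its two endpoints lies in the open lower half-plane $\{(x,t):t<0\}$, the union $J:=\gamma'\cup\beta$ is a simple closed curve, so $\R^2\setminus J$ has exactly one bounded component $I$ and one unbounded component $O$. If $(x_2,0)$ or $(x_4,0)$ lies on $\gamma'$, then that point lies in $\gamma([0,1])\cap\eta([0,1])$ and we are done. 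Otherwise neither point lies on $J$: they are not on $\beta$, since the points of $\beta$ have first coordinate in $[x_1,x_3]$ and meet the $x$-axis only at $(x_1,0)$ and $(x_3,0)$, whereas $x_1<x_2<x_3<x_4$. I would then apply the ray-crossing parity criterion with vertical downward rays: the ray emanating downward from $(x_2,0)$ meets $J$ in the single point $(x_2,-1)$, transversally (it crosses the horizontal segment of $\beta$), and does not meet $\gamma'$, so $(x_2,0)\in I$; the ray emanating downward from $(x_4,0)$ does not meet $J$ at all, because its first coordinate $x_4$ exceeds $x_3$, so $(x_4,0)\in O$.

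Finally, since $\eta$ is a continuous path joining $(x_2,0)\in I$ to $(x_4,0)\in O$, its image is connected and meets both components of $\R^2\setminus J$, hence must meet $J$ itself. But $\eta([0,1])\subseteq\overline{\R^2_+}$, and $J\cap\overline{\R^2_+}=\gamma'\cup\{(x_1,0),(x_3,0)\}=\gamma'([0,1])$ because $\beta$ touches $\overline{\R^2_+}$ only at $(x_1,0)$ and $(x_3,0)$. Therefore $\eta([0,1])\cap\gamma'([0,1])\neq\varnothing$, and since $\gamma'([0,1])\subseteq\gamma([0,1])$ this is the assertion. The step I expect to be the main obstacle is the parity argument placing $(x_2,0)$ in the bounded component $I$: this is exactly why I would use a polygonal $\beta$ with a horizontal middle segment, so that the vertical ray crosses $\beta$ transversally and in precisely one point, and why the strict inequalities $x_1<x_2<x_3<x_4$ are needed, namely to keep the relevant first coordinates from colliding. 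The reduction of $\gamma$ to a simple arc is routine but must be stated explicitly, since the Jordan curve theorem applies only to simple closed curves.
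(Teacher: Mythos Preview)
Your argument is correct. The paper does not actually supply a proof of this lemma: it only notes that the statement is a variant of \cite[Lemma~D.1]{FL} and refers to \cite[pp.~554--556]{GPW} for a full proof based on the \emph{intersection number} of two curves. Your route via the Jordan curve theorem is therefore genuinely different from the one the paper points to. The intersection-number argument has the advantage of handling non-simple curves directly, without your preliminary reduction of $\gamma$ to a simple arc; conversely, your approach stays within elementary point-set topology, and the ray-crossing step becomes transparent precisely because you chose $\beta$ polygonal with a horizontal middle segment, so that the vertical rays from $(x_2,0)$ and $(x_4,0)$ meet $J$ in exactly one and zero points, respectively, both transversal.

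Your parenthetical remark is also correct and worth recording: the conclusion $t,\tilde t\in(0,1)$ cannot hold in general (your semicircle-versus-segment example shows this), and what is actually true --- and what the paper actually uses in the proofs of Lemma~\ref{nodal-domains-extension-corollary} and Theorem~\ref{radial-nondegeneracy-combined-sufficient} --- is $\gamma([0,1])\cap\eta([0,1])\neq\varnothing$. The paper's own comment that ``the intersection point may also lie on $\partial\R^2_+$'' is an implicit acknowledgement of this issue.
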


This lemma is a variant of \cite[Lemma D.1]{FL}. In contrast to \cite[Lemma D.1]{FL}, we do not assume that the curves $\gamma$ and $\eta$ are simple (i.e., injective), and we do not require that $\gamma(t), \eta(t) \in \R^2_+$ for $t \in (0,1)$. Clearly, under the more general assumptions of Lemma~\ref{sec:topological-lemma-1}, the intersection point may also lie on $\partial \R^2_+$. A full proof of Lemma~\ref{sec:topological-lemma-1}, based on the intersection number of two curves, can already be found in \cite[p. 554--556]{GPW}, and it is very likely that there are earlier references for this lemma which we are not aware of.

\end{document}